\documentclass{amsart}%
\usepackage{amssymb}
\usepackage{amsmath}
\usepackage{amsfonts}

\usepackage[all,cmtip]{xy}
\usepackage[margin=1.6in]{geometry}%
\setcounter{MaxMatrixCols}{30}%
\usepackage{graphicx}

\usepackage{hyperref}

\newtheorem{theorem}{Theorem}
\theoremstyle{plain}
\newtheorem*{acknowledgement}{Acknowledgement}

\newtheorem{corollary}[theorem]{Corollary}
\newtheorem*{corollary_nocount}{Corollary}

\newtheorem{definition}[theorem]{Definition}

\newtheorem{lemma}[theorem]{Lemma}

\newtheorem{proposition}[theorem]{Proposition}
\newtheorem{Fact}[theorem]{Fact}
\theoremstyle{remark}
\newtheorem{remark}[theorem]{Remark}
\newtheorem{example}[theorem]{Example}

\newtheorem{problem}{Problem}
\numberwithin{equation}{section}
\numberwithin{theorem}{section}
\begin{document}
\title[Multiplicative zeta function]{Multiplicative zeta function and logarithmic\linebreak point counting over
finite fields}
\author{O. Braunling}
\thanks{The author was supported by the DFG GK1821 \textquotedblleft Cohomological
Methods in Geometry\textquotedblright.}

\begin{abstract}
The zeta function of a motive over a finite field is multiplicative with
respect to the direct sum of motives. It has beautiful analytic properties, as
were predicted by the Weil conjectures. There is also a multiplicative zeta
function, which instead respects the tensor product of motives. There is no
analogue of the Weil conjectures, and we give a sufficient criterion for an
analytic continuation to exist. This happens, for example, for cellular
varieties, abelian varieties, or genus $\geq2$ curves with a supersingular Jacobian.
\end{abstract}
\maketitle

\section{Introduction}

Let $X/\mathbf{F}_{q}$ be a variety over a finite field. The usual zeta
function%
\begin{equation}
Z(X,t):=\exp\left(  \sum\nolimits_{r\geq1}\left\vert X(\mathbf{F}_{q^{r}%
})\right\vert \cdot\frac{t^{r}}{r}\right) \label{lm1}%
\end{equation}
behaves well under disjoint union, $Z(X_{1}\coprod X_{2},t)=Z(X_{1},t)\cdot
Z(X_{2},t)$. Generalized to motives, it respects the symmetric monoidal
structure coming from the direct sum of motives. However, we might instead be
interested in the question: Can our variety $X$ be written as a product
$X=X_{1}\times X_{2}\,$? For this type of question the multiplicative zeta
function%
\[
Z_{\log}(X,t):=\exp\left(  \sum\nolimits_{r\geq1}\log\left\vert X(\mathbf{F}%
_{q^{r}})\right\vert \cdot\frac{t^{r}}{r}\right)  \text{,}%
\]
when defined, is better suited. It satisfies%
\[
Z_{\log}(X_{1}\times X_{2},t)=Z_{\log}(X_{1},t)\cdot Z_{\log}(X_{2},t)\text{.}%
\]
The definition can also be generalized to motives, and then respects the
symmetric monoidal structure coming from the tensor product of motives. In a
way, $Z$ resp. $Z_{\log}$ belong to the two natural symmetric monoidal
structures on a Tannakian category, \textquotedblleft$\oplus$%
\textquotedblright\ resp. \textquotedblleft$\otimes$\textquotedblright%
.\medskip

We know a lot about the ordinary zeta function thanks to the Weil conjectures,
for example:\newline\textsc{(A)} The function $Z$ is rational; in particular
it has an analytic continuation to the entire complex plane.\newline%
\textsc{(B)} Poincar\'{e} Duality of a smooth projective variety $X$ induces a
functional equation%
\[
Z(X,(q^{d}t)^{-1})=\pm q^{d\chi(X)}\cdot t^{\chi(X)}\cdot Z(X,t)\text{.}%
\]
\textsc{(C)} Zeros and poles can be described in terms of the cohomology of
$X$.

And of course all properties of $Z$ follow from the existence of a Weil
cohomology theory. There is no indication that the function $Z_{\log}$ can be
obtained from something like a Grothendieck--Lefschetz trace formula, the
logarithm term is just too disruptive, so its analytic properties are far less
clear. Basically, following Murphy's Law, one might suspect its properties are
random at best.

But this is not so. Firstly, we shall show that $Z_{\log}$ behaves well for
all abelian varieties:

\begin{theorem}
\label{thm_i_1}Let $A/\mathbf{F}_{q}$ be an abelian variety of dimension
$g\geq1$. Then $Z_{\log}$ has radius of convergence $1$. If $\alpha_{1}%
,\ldots,\alpha_{2g}$ denote the Weil $q$-numbers of weight one, $Z_{\log}$
admits a multi-valued analytic continuation to%
\[
\mathbf{C}\setminus\{1,\alpha_{1}^{\mathbf{Z}_{\geq2}},\ldots,\alpha
_{2g}^{\mathbf{Z}_{\geq2}}\}\text{.}%
\]

\end{theorem}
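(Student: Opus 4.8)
The plan is to reduce everything to the classical Weil point count for abelian varieties and then analyse the resulting series by hand. Write $\alpha_{1},\dots,\alpha_{2g}$ for the Frobenius eigenvalues on $H^{1}$; these are the weight-one Weil $q$-numbers, so $\lvert\alpha_{i}\rvert=q^{1/2}$, the multiset $\{\alpha_{i}\}$ is stable under complex conjugation, and $\prod_{i}\alpha_{i}=q^{g}$ (functional equation, equivalently self-duality of $H^{1}$). The Lefschetz trace formula, using $H^{j}=\wedge^{j}H^{1}$, gives $\lvert A(\mathbf{F}_{q^{r}})\rvert=\prod_{i=1}^{2g}(1-\alpha_{i}^{\,r})$ for every $r\geq1$. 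Since this is a positive integer and $\lvert\alpha_{i}^{\,r}\rvert>1$, I would pull out the dominant part, writing $1-\alpha_{i}^{\,r}=(-\alpha_{i}^{\,r})(1-\alpha_{i}^{-r})$ and collecting, to obtain
\[
\bigl\lvert A(\mathbf{F}_{q^{r}})\bigr\rvert \;=\; q^{gr}\prod_{i=1}^{2g}\bigl(1-\alpha_{i}^{-r}\bigr),\qquad\text{with }\ \prod_{i}(1-\alpha_{i}^{-r})>0\ \text{ and }\ \lvert\alpha_{i}^{-1}\rvert<1 .
\]

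Taking logarithms is where care is needed. Because the numbers $\alpha_{i}^{-1}$ come in complex-conjugate pairs and each $1-\alpha_{i}^{-r}$ lies in the disc $\lvert z-1\rvert<1$, where the principal branch $\operatorname{Log}$ is defined, the imaginary parts cancel pairwise and one gets the exact identity (no hidden $2\pi i\mathbf{Z}$)
\[
\log\bigl\lvert A(\mathbf{F}_{q^{r}})\bigr\rvert \;=\; gr\log q\;+\;\sum_{i=1}^{2g}\operatorname{Log}\bigl(1-\alpha_{i}^{-r}\bigr).
\]
In particular $\tfrac1r\log\lvert A(\mathbf{F}_{q^{r}})\rvert\to g\log q\neq0$, so $\sum_{r\geq1}\log\lvert A(\mathbf{F}_{q^{r}})\rvert\,t^{r}/r$ has radius of convergence exactly $1$; combined with the discussion of $t=1$ below this proves the first assertion. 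Next I would substitute $\operatorname{Log}(1-\alpha_{i}^{-r})=-\sum_{k\geq1}\alpha_{i}^{-rk}/k$ and interchange the two sums — legitimate for $\lvert t\rvert<q^{1/2}$, the double series being dominated by $\sum_{r,k}q^{-rk/2}\lvert t\rvert^{r}$ — which rewrites the exponent of $Z_{\log}$ as
\[
\sum_{r\geq1}\log\bigl\lvert A(\mathbf{F}_{q^{r}})\bigr\rvert\frac{t^{r}}{r}\;=\;g\log q\cdot\frac{t}{1-t}\;+\;\sum_{k\geq1}\frac1k\operatorname{Log}Q_{k}(t),\qquad Q_{k}(t):=\prod_{i=1}^{2g}\bigl(1-\alpha_{i}^{-k}t\bigr),
\]
where each $Q_{k}$ is a degree-$2g$ polynomial with real coefficients and zero set $\{\alpha_{i}^{\,k}\}$.

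Exponentiating gives
\[
Z_{\log}(A,t)\;=\;\exp\!\left(g\log q\cdot\frac{t}{1-t}\right)\cdot Q_{1}(t)\cdot\prod_{k\geq2}Q_{k}(t)^{1/k},
\]
the infinite product meaning $\exp\bigl(\sum_{k\geq2}\tfrac1k\operatorname{Log}Q_{k}(t)\bigr)$. The first factor is holomorphic on $\mathbf{C}\setminus\{1\}$ with an essential singularity at $t=1$; the factor $Q_{1}$ is an honest polynomial, so the $k=1$ branch points disappear after exponentiation — this is exactly why $\alpha_{1},\dots,\alpha_{2g}$ themselves are not removed; and the tail $\sum_{k\geq2}\tfrac1k\operatorname{Log}Q_{k}(t)$ converges locally uniformly on $\mathbf{C}$ minus the zeros of the $Q_{k}$ with $k\geq2$ (on such compacta $\operatorname{Log}Q_{k}(t)=O(q^{-k/2})$), defining a holomorphic, generally multivalued, function there with logarithmic branch points precisely at $\bigcup_{i}\alpha_{i}^{\mathbf{Z}_{\geq2}}$. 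Putting the three factors together yields the continuation to $\mathbf{C}\setminus(\{1\}\cup\bigcup_{i}\alpha_{i}^{\mathbf{Z}_{\geq2}})$. Finally, since $Q_{k}(1)=\lvert A(\mathbf{F}_{q^{k}})\rvert/q^{gk}\neq0$, the sum $\sum_{k\geq1}\tfrac1k\operatorname{Log}Q_{k}$ is holomorphic near $t=1$ while $t/(1-t)$ has a pole there, so $t=1$ is genuinely a singular point and the radius of convergence is indeed $1$.

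The hard part is the branch-of-logarithm bookkeeping: making the identity $\sum_{i}\operatorname{Log}(1-\alpha_{i}^{-r})=\log\lvert A(\mathbf{F}_{q^{r}})\rvert-gr\log q$ hold on the nose, and checking after the rearrangement that the $k=1$ contribution exponentiates to the single-valued polynomial $Q_{1}$ while the genuine multivaluedness of $Z_{\log}$ is carried by the factors $Q_{k}(t)^{1/k}$ with $k\geq2$ and is located exactly on $\bigcup_{i}\alpha_{i}^{\mathbf{Z}_{\geq2}}$. The convergence of the double series and of the tail $\sum_{k\geq2}$ are routine geometric-series estimates.
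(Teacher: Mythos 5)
Your argument is correct, but it reaches the continuation by a genuinely different route than the paper. The paper performs the same first reduction (factoring $|A(\mathbf{F}_{q^r})|=q^{gr}\prod_j|1-\alpha_j^{-r}|$ and splitting off $\exp(g\log q\,\tfrac{t}{1-t})$), but then feeds each factor $\exp(\sum_r\log|1-\alpha_j^{-r}|\,t^r/r)$ into the general Abel--Plana machinery of Steps I--II (Theorem \ref{thm_AnalyticCtHelper} with $N=1$, $\varepsilon_1=1$, $\lambda_1=\alpha_j^{-1}$), which first yields a continuation away from $\{1,\alpha_j^{\mathbf{Z}_{\geq1}}\}$ and only afterwards removes the points $\alpha_j$ themselves by a residue computation showing the local exponent there is the integer $n_j$. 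You instead expand $\operatorname{Log}(1-\alpha_i^{-r})$ into its power series and interchange the two summations, arriving at the explicit factorization $Z_{\log}(A,t)=q^{gt/(1-t)}\,Q_1(t)\,\prod_{k\geq2}Q_k(t)^{1/k}$ with $Q_k(t)=\prod_i(1-\alpha_i^{-k}t)$; the interchange is justified by absolute convergence for $|t|<q^{1/2}$, the removability at the $\alpha_i$ is automatic because the $k=1$ factor is literally the polynomial $Q_1$, and the branch points at $\alpha_i^{k}$, $k\geq2$, appear transparently as $k$-th roots. This is more elementary and self-contained (no Abel--Plana, no pseudo-divisors, no monodromy proposition), and it hands you the zeros at the $\alpha_i$ and the rational local exponents for free. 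What the paper's heavier machinery buys is generality: for the supersingular and motivic results the counting term is $1-\sum_i\varepsilon_i\lambda_i^{r}$ rather than a product of factors $1-\lambda_i^{r}$, the analogous rearrangement produces multinomial sums whose singular locus (the pseudo-divisor $\mathcal{P}^{\operatorname{per}}$) need not be locally finite, and the Abel--Plana framework with the local-finiteness hypothesis is what controls that situation; it also delivers the finer statements of Theorem \ref{Thm_AC_ForAbelianVar} (roots-of-unity periods, single-valued meromorphic logarithmic derivative, exact monodromy exponents). The only point in your write-up needing a word of care is the tail: the principal-branch formula for $\operatorname{Log}Q_k(t)$ is valid on a given compactum only once $q^{-k/2}\sup|t|<1$, so on a simply connected $U\ni0$ avoiding $\bigcup_i\alpha_i^{\mathbf{Z}_{\geq2}}$ you should choose branches of $\log Q_k$ along $U$ for the finitely many remaining $k\geq2$ and note that they agree with the principal determination near $0$; with that routine adjustment the locally uniform convergence and the claimed continuation stand.
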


See Theorem \ref{Thm_AC_ForAbelianVar} for a precise statement. We will give a
precise formulation for `multi-valued analytic continuation' below. Secondly,
we show that $Z_{\log}$ has an analytic continuation for all cellular
varieties. Indeed, it suffices if all summands in the motivic decomposition of
the variety are (Tate or) supersingular. The latter means that all its
Frobenius eigenvalues are of the shape $\zeta\cdot q^{w/2}$ for $\zeta$ a root
of unity and $w$ an integer. This encompasses all Artin and Tate motives. If
one believes in the\ Tate conjecture, numerical pure motives over
$\mathbf{F}_{q}$ are generated by all abelian varieties; and these
supersingular motivic summands would be those coming from the supersingular
abelian varieties.

\begin{theorem}
\label{thm_i_2}Suppose $X/\mathbf{F}_{q}$ is a smooth projective variety with
an $\mathbf{F}_{q}$-rational point. Suppose its motive\footnote{Here `motive'
refers to pure Grothendieck motives. It can be taken to mean numerical
motives, or $\ell$-adic homological motives for any $\ell$ different from the
characteristic of $\mathbf{F}_{q}$.} splits as a direct sum%
\[
\mathcal{M}(X)=\bigoplus M_{i}%
\]
such that each summand $M_{i}$ is supersingular, e.g. a Tate motive. Then
$Z_{\log}$ has a multi-valued analytic continuation to%
\[
\mathbf{C}\setminus\Delta\text{,}%
\]
with $\Delta$ some discrete subset of $\mathbf{C}$.
\end{theorem}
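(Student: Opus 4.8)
The plan is to reduce the statement to a structural fact about the point counts $N_r:=|X(\mathbf F_{q^r})|$ and then continue $Z_{\log}$ ``residue class by residue class''. Since $X$ is connected and carries an $\mathbf F_q$-point it is geometrically connected, hence (being smooth) geometrically irreducible, so $H^0(X_{\overline{\mathbf F}_q},\mathbf Q_\ell)=\mathbf Q_\ell(0)$ and $H^{2d}=\mathbf Q_\ell(-d)$ with $d:=\dim X$. By purity the Frobenius eigenvalues on $H^i$ have absolute value $q^{i/2}$, and supersingularity forces each to be of the form $\zeta q^{i/2}$ with $\zeta$ a root of unity; let $M$ be the least common multiple of the orders of all these $\zeta$ (finite, as $H^*$ is finite-dimensional). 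For $r$ in a fixed class $r\equiv r_0\pmod M$ the normalised traces $\mathrm{Tr}(\mathrm{Frob}^r\mid H^i)\,q^{-ir/2}$ do not depend on $r$, so the Grothendieck--Lefschetz trace formula gives $N_r=Q_{r_0}(q^{r/2})$ for a monic $Q_{r_0}\in\mathbf R[u]$ of degree $2d$ whose leading and constant coefficients equal $1$ (coming from $H^{2d}$ and $H^0$). Factor $Q_{r_0}(u)=\prod_{m=1}^{2d}(u-\lambda_m^{(r_0)})$; the multiset $\{\lambda_m^{(r_0)}\}_m$ is stable under complex conjugation since $Q_{r_0}$ is real.

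Next set $r^{\ast}:=2\lceil\log_q\max_{m,r_0}|\lambda_m^{(r_0)}|\rceil+2$, so that $q^{r/2}>|\lambda_m^{(r_0)}|$ for every $r\ge r^{\ast}$. For such $r$ each factor $q^{r/2}-\lambda_m^{(r_0)}$ has positive real part, conjugate factors have opposite arguments, and $N_r=\prod_m(q^{r/2}-\lambda_m^{(r_0)})>0$ by the rational point; hence, with principal branches,
\[
\log N_r = dr\log q+\sum_{m=1}^{2d}\log\!\big(1-\lambda_m^{(r_0)}q^{-r/2}\big)\qquad(r\ge r^{\ast}).
\]
Feeding this into $\log Z_{\log}(X,t)=\sum_{r\ge1}\log N_r\cdot t^r/r$ and isolating the finitely many terms with $r<r^{\ast}$ (which contribute a polynomial $P(t)$) yields
\[
\log Z_{\log}(X,t)=P(t)+d\log q\cdot\frac{t^{\,r^{\ast}}}{1-t}+\sum_{r_0=1}^{M}\sum_{m=1}^{2d}\Xi_{m,r_0}(t),\qquad \Xi_{m,r_0}(t):=\!\!\sum_{\substack{r\equiv r_0\,(M)\\ r\ge r^{\ast}}}\!\!\log\!\big(1-\lambda_m^{(r_0)}q^{-r/2}\big)\frac{t^r}{r}.
\]
The first two summands are harmless (the second has a single pole, at $t=1$), so everything reduces to continuing each $\Xi_{m,r_0}$.

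Fix $m,r_0$ and write $\lambda:=\lambda_m^{(r_0)}$. Since $|\lambda q^{-r/2}|<1$ for $r\ge r^{\ast}$, expand $\log(1-\lambda q^{-r/2})=-\sum_{n\ge1}\lambda^n q^{-nr/2}/n$; the resulting double series is absolutely convergent for $|t|<q^{1/2}$, so interchanging the sums there gives
\[
\Xi_{m,r_0}(t)=-\sum_{n\ge1}\frac{\lambda^n}{n}\,A_n(t),\qquad A_n(t):=\sum_{\substack{r\equiv r_0\,(M)\\ r\ge r^{\ast}}}\frac{(q^{-n/2}t)^r}{r}.
\]
Each $A_n$ is a polynomial plus a $\mathbf C$-linear combination of the functions $\log(1-\zeta q^{-n/2}t)$ with $\zeta^M=1$, so a branch of it is holomorphic off $q^{n/2}\mu_M$; crucially $A_n$ vanishes to order $\ge r^{\ast}$ at $t=0$, whence on any compact $K$ disjoint from $\bigcup_{n\ge1}q^{n/2}\mu_M$ one has $|A_n(t)|\le C_K\,q^{-n r^{\ast}/2}$ for all large $n$. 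As $r^{\ast}>2\log_q|\lambda|$, i.e. $|\lambda|q^{-r^{\ast}/2}<1$, the series $\sum_n\lambda^n A_n(t)/n$ converges uniformly on $K$; therefore $\Xi_{m,r_0}$ extends to a multi-valued holomorphic function on $\mathbf C\setminus\Delta$, where $\Delta:=\{1\}\cup\{\zeta q^{n/2}:n\ge1,\ \zeta^M=1\}$ is discrete (finitely many points of each modulus $q^{n/2}\to\infty$). Summing the finitely many $\Xi_{m,r_0}$ shows $\log Z_{\log}(X,\cdot)$, and hence $Z_{\log}(X,\cdot)=\exp\log Z_{\log}(X,\cdot)$, has a multi-valued analytic continuation to $\mathbf C\setminus\Delta$, with $\Delta$ discrete.

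The genuine difficulty is exactly the convergence in the last step. If one expands the logarithm and interchanges the two sums \emph{before} truncating to $r\ge r^{\ast}$, one obtains $\sum_n\lambda^n\log(1-\zeta q^{-n/2}t)/n$, whose general term is $\sim -n^{-1}\lambda^n\zeta q^{-n/2}t$, so the series diverges as soon as some $|\lambda_m^{(r_0)}|\ge q^{1/2}$ — and this really happens (for instance when $X$ is $\mathbf P^1\times\mathbf P^1$ blown up at sufficiently many rational points over $\mathbf F_2$: then $N_r=q^{2r}+b_2q^r+1$, two of the four roots $\lambda_m$ have $|\lambda_m|\sim\sqrt{b_2}\gg\sqrt2$). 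The resolution is that after removing the small $r$ the inner sum $A_n$ has a zero of order $r^{\ast}$ at the origin, and this controls the growth of $\lambda^n$. A secondary point requiring care is the passage from the real number $\log N_r$ to $\sum_m\log(1-\lambda_m^{(r_0)}q^{-r/2})$ in Step 2: it rests on $N_r>0$, on positivity of the real parts of the factors for $r$ large, and on the conjugation symmetry of the $\lambda_m^{(r_0)}$ to kill the imaginary part — this is where the $\mathbf F_q$-rational point is used.
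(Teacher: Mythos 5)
Your argument is correct in substance, but it takes a genuinely different route from the paper. The paper proves this statement (in its precise form, Theorem \ref{Thm_AC_ForTateAndSupersingMotivicDecomp}) by feeding the data $\varepsilon_i\in\{\pm1\}$, $\lambda_i=\alpha_{v,j}/q^{m}$ into its general Abel--Plana continuation machine (Theorem \ref{thm_AnalyticCtHelper}, built in Steps I--III), the only supersingularity-specific input being that the support of the pseudo-divisor $\mathcal{P}^{\operatorname*{per}}$ lands in the lattice $\mathbf{Z}\langle\tfrac12\log q,\tfrac{2\pi i}{M}\rangle$ and is therefore locally finite. You instead exploit supersingularity head-on: mod-$M$ periodicity of the normalised traces lets you write $N_r=Q_{r_0}(q^{r/2})$ for a fixed real monic polynomial on each residue class, factor it, and reduce to Lerch-type series $\sum_{r\equiv r_0}\log(1-\lambda q^{-r/2})t^r/r$, which you continue by expanding the logarithm and resumming into the elementary functions $A_n$, with convergence rescued by truncating at $r^{\ast}$ so that each $A_n$ vanishes to order $r^{\ast}$ at the origin --- and you correctly identify this truncation as the crux, since the roots $\lambda_m^{(r_0)}$ can be much larger than $\sqrt q$. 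This avoids Abel--Plana entirely and yields an explicit exceptional set $\Delta=\{1\}\cup\bigcup_{n\geq1}q^{n/2}\mu_M$; what it does not give (and is not needed here) is the finer output of the paper's machinery, namely root-of-unity periods between branches, the single-valued meromorphic logarithmic derivative with order-one poles, and rational monodromy exponents, nor does it apply outside the supersingular setting, where the paper's pseudo-divisor criterion still has content. Three small points to tidy up: you silently assume $X$ connected (the paper's precise statement indeed assumes geometric connectedness, so this matches the intended hypothesis, but flag it); the reality of $Q_{r_0}$ rests on conjugation-stability of the weight-$i$ eigenvalue multiset, i.e.\ rationality of the weight-$i$ factor of the zeta function (Deligne) --- say so; and the bound $|A_n|\leq C_Kq^{-nr^{\ast}/2}$ should be phrased for the branch on a fixed simply connected $U\ni0$: enlarge $K$ to a connected compact containing $0$ inside $U$, note that for large $n$ it lies in the disc $|t|<q^{n/2}$ where $A_n$ is single-valued, so the $U$-continuation agrees with the power series there and the estimate follows --- a routine but necessary remark before summing over $n$.
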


See Theorem \ref{Thm_AC_ForTateAndSupersingMotivicDecomp} for details. This
theorem covers for example: projective space, Grassmannians, or more broadly
all projective homogeneous varieties. It also covers smooth projective curves
of arbitrary genus, as long as their Jacobian is supersingular.

We also extend the definition of $Z_{\log}$ to motives. It cannot always be
defined then, but whenever it exists, it is multiplicative with respect to the
tensor product of motives. There are plenty of motives not coming from a
variety, for which we also get the existence of analytic continuations.

The above theorems also have an implication which no longer makes any
reference to $Z_{\log}$:

\begin{corollary_nocount}
If $X/\mathbf{F}_{q}$ is a smooth projective variety of dimension $\geq1$,
meeting the hypotheses of Theorem \ref{thm_i_1} or Theorem \ref{thm_i_2}, then
the sequence%
\[
n\mapsto\log\left\vert X(\mathbf{F}_{q^{n}})\right\vert
\]
does not satisfy any linear recurrence equation.
\end{corollary_nocount}

This result may not be particularly important; and probably admits a direct
proof based on the rationality of $Z$. However, it falls out with no extra
work from the previous results: If the sequence satisfies a linear recurrence,
then its generating function, which is nothing but $Z_{\log}^{\prime}/Z_{\log
}$, would be rational. Rational functions have a single-valued analytic
continuation to $\mathbf{C}\setminus\{$finite set$\}$, contradicting the
analytic properties which our theory yields. This needs the more precise
versions in the main body of the text, and not the shortened formulations
above. See Theorem \ref{thm_LinRecur}.

\begin{corollary_nocount}
Suppose $X/\mathbf{F}_{q}$ is a geometrically connected smooth projective
curve with an $\mathbf{F}_{q}$-rational point. If

\begin{enumerate}
\item the genus is $g=0,1$ or

\item the genus is $g\geq2$ and the Jacobian of $X$ is supersingular,
\end{enumerate}

then $Z_{\log}(X,t)$ admits a multi-valued analytic continuation to
$\mathbf{C}\setminus\Delta$, with $\Delta$ some discrete subset of
$\mathbf{C}$.
\end{corollary_nocount}

See Theorem \ref{thm_caseofcurves}

\begin{acknowledgement}
I thank Giuseppe Ancona for his help and very clarifying explanations
regarding a number of questions. I thank Fritz H\"{o}rmann for his help around Honda--Tate.
\end{acknowledgement}

\section{Definitions}

\subsection{Conventions\label{subsect_Conventions}}

For us, a \emph{variety} $X/k$ is a finite type separated $k$-scheme for $k$
some field. A morphism of varieties is tacitly understood to mean a finite
type separated $k$-morphism. Suppose $k=\mathbf{F}_{q}$ is a finite field. By
\textquotedblleft\emph{Frobenius}\textquotedblright\ we always refer to the
geometric Frobenius, i.e. it acts as $x\mapsto x^{q^{-1}}$ on elements
$x\in\mathbf{F}_{q}$. This agreement only really plays a r\^{o}le to ensure
that $\mathbf{Z}_{\ell}(1):=\underleftarrow{\lim}\mu_{\ell^{n}}$ has weight
$-2$.

The term \textquotedblleft\emph{logarithm}\textquotedblright\ will usually
refer to the standard branch, i.e.%
\[
\log z=\log\left\vert z\right\vert +i\arg z\qquad\text{with}\qquad\arg
z\in(-\pi,\pi]
\]
for all $z\in\mathbf{C}^{\times}$. When we work with a more general branch of
the logarithm, we denote it by a capitalized `$\operatorname*{Log}$'.

We will freely use some aspects of the theory of pure (Grothendieck) motives.
All we need is explained in \cite{MR1265538} or \cite{MR2115000}. Our
conventions are as follows: Let $F$ be any field of characteristic zero, which
will serve as our field of coefficients. We pick $F$ once and for all and
henceforth drop it from the notation. Let $\mathsf{Mot}_{\sim}(k)$ be the
category of effective pure (Grothendieck) motives over $k$ with coefficients
in $F$, and \textquotedblleft$\sim$\textquotedblright\ denotes an adequate
equivalence relation. Objects are of the shape $(X,p)$ with $X$ a smooth
projective $k$-variety and $p$ an idempotent correspondence from $X$ to itself.

Concretely, we write $\mathsf{Mot}_{num}(k)$ for numerical motives, and
$\mathsf{Mot}_{hom_{\ell}}(k)$ for homological motives ($\ell\neq
\operatorname*{char}k$), using $\ell$-adic cohomology as the underlying Weil
cohomology theory. Conjecturally, homological and numerical equivalence agree,
and in particular the choice of a Weil cohomology theory should not matter.
So, speculatively, $\mathsf{Mot}_{num}(k)=\mathsf{Mot}_{hom_{\ell}}(k)$.
However, this remains open. Many aspects of the formalism in this text can be
extended to motives. We discuss this in the Appendix
\S \ref{sect_MotivicPicture}.

\subsection{Definition}

First of all, we give a definition of $Z_{\log}$ for varieties.

\begin{definition}
\label{def_Zlog_ForVars}If $X/\mathbf{F}_{q}$ is a variety with an
$\mathbf{F}_{q}$-rational point, we define the \emph{multiplicative zeta
function} as the power series%
\[
Z_{\log}(X,t):=\exp\left(  \sum\nolimits_{r\geq1}\log\left\vert X(\mathbf{F}%
_{q^{r}})\right\vert \cdot\frac{t^{r}}{r}\right)  \text{.}%
\]

\end{definition}

Thanks to the condition $X(\mathbf{F}_{q})\neq\varnothing$, we have
$\left\vert X(\mathbf{F}_{q^{r}})\right\vert \geq1$ for all $r\geq1$, making
this expression well-defined as a formal power series over the reals. If
$X_{1},X_{2}$ are varieties with $X_{i}(\mathbf{F}_{q})\neq\varnothing$ for
$i=1,2$, we get the fundamental property:%
\[
Z_{\log}(X_{1}\times X_{2},t)=Z_{\log}(X_{1},t)\cdot Z_{\log}(X_{2},t)\text{.}%
\]
For the ordinary zeta function, denoted by $Z$, we instead have%
\[
Z(X_{1}%
{\textstyle\coprod}
X_{2},t)=Z(X_{1},t)\cdot Z(X_{2},t)\text{.}%
\]

\begin{remark}
There is also a formula for product varieties for the function $Z$, but it
relies on the more complicated so-called \emph{Witt product} `$\ast$' (it is
not due to Witt, but named so as it is related to the ring of big Witt
vectors). Then $Z(X_{1}\times X_{2},t)=Z(X_{1},t)\ast Z(X_{2},t)$. We refer to
\cite{MR2272145} and \cite{MR3395874} for more on this perspective.
\end{remark}

\begin{definition}
\label{def_AC}We say that a series $f(t)=\sum b_{r}t^{r}$, $b_{r}\in
\mathbf{C}$, with positive radius of convergence, \emph{has a (possibly
multi-valued) analytic continuation}, or in brief $\left.
\text{\textbf{$\textsc{(AC)}$}}\right.  $, if there exists a discrete set
$\Delta\subset\mathbf{C}$, $0\notin\Delta$, with the property: For every
simply connected domain $U$ with $0\in U\subset\mathbf{C}\setminus\Delta$,
there exists a holomorphic function $f_{U}:U\rightarrow\mathbf{C}$ such that
$f_{U}$ agrees with $f$ in some neighbourhood of $t=0$.
\end{definition}

Equivalently, regard $(\mathbf{C}\setminus\Delta,0)$ as a pointed space. If
$V$ denotes a sufficiently small neighbourhood of $0$, the above datum defines
a unique lift of $f$ on $V$ to the universal covering space $(\tilde{X},\ast
)$:%
\begin{equation}%
%
%
\xymatrix{
& (\tilde{X},\ast) \ar[d] \\
(V,0) \ar[r] \ar[ur] & (\mathbf{C} \setminus\Delta, 0),
}%
\label{lpic1}%
\end{equation}
Giving this lift of $f$ is equivalent to providing the collection of all
$(f_{U})_{U}$ as above. This formulation is more elegant, but less practical
for explicit computations.

We will take Definition \ref{def_AC} as the meaning for the term `analytic
continuation' in this text in order to avoid having to repeat more precise
qualifiers again and again. We call it multi-valued because different choices
of $U$ might yield different continuations.

\begin{remark}
[Existence]The existence of a (single- or multi-valued) analytic continuation
is a non-trivial statement. Even for the ordinary zeta function, as in
Equation \ref{lm1}, having an explicit power series expansion does not easily
let us read off whether $Z$ is a rational function. For example, the power
series%
\[
\text{\textsf{(A)}}\quad\text{ }\sum_{r\geq1}t^{r},\qquad\text{\textsf{(B)}%
}\quad\text{ }\sum_{r\geq1}\frac{t^{r}}{r},\qquad\text{\textsf{(C)}}%
\quad\text{ }\sum_{r\geq1}t^{2^{r}}\text{,}%
\]
all have radius of convergence precisely one. The first one is a rational
function, namely $\frac{t}{1-t}$, and thus admits a meromorphic continuation
to the entire plane, while the second is $-\log(1-t)$, so while it does admit
a holomorphic extension to all of $\mathbf{C}\setminus\{0\}$, it requires
multiple branches, and yet the last power series has the unit circle as its
natural boundary. This means that it is impossible to find an analytic
continuation anywhere outside the open unit disc -- for a dense set inside the
unit circle, its values tend to go off to infinity as one approaches the
radius of convergence. Both \textsf{(A)}\ and \textsf{(B)} satisfy our
definition of \textbf{$\left.  \text{\textbf{$\textsc{(AC)}$}}\right.  $},
while \textsf{(C)} does not.
\end{remark}

\begin{example}
\label{example_AffineSpace}For affine space we have $\left\vert \mathbf{A}%
^{n}(\mathbf{F}_{q^{r}})\right\vert =q^{nr}$ and thus%
\[
Z_{\log}(\mathbf{A}^{n},t)=\exp\left(  \sum_{r\geq1}\log(q^{nr})\cdot
\frac{t^{r}}{r}\right)  =\exp\left(  n\log(q)\cdot\frac{t}{1-t}\right)
=\left(  q^{\left(  \frac{t}{1-t}\right)  }\right)  ^{n}\text{.}%
\]
This defines a single-valued holomorphic continuation to all of $\mathbf{C}%
\setminus\{1\}$. We have \textbf{$\left.  \text{\textbf{$\textsc{(AC)}$}%
}\right.  $} for $\Delta:=\{1\}$. We also see the property of the
multiplicativity; it would have sufficed to deal with $\mathbf{A}^{1}$. (The
usual zeta function is $Z(\mathbf{A}^{n},t)=\frac{1}{1-q^{n}t}$)
\end{example}

\begin{example}
Suppose we want to deal with the torus $(\mathbf{G}_{m})^{n}$ resp.
$(\mathbf{P}^{1})^{\times n}$. It suffices to treat $n=1$. However, we get%
\[
Z_{\log}(\mathbf{-},t):=\exp\left(  \sum\nolimits_{r\geq1}\log(q^{r}\mp
1)\cdot\frac{t^{r}}{r}\right)  \text{.}%
\]
The radius of convergence of the inner series is $R=1$, and the values of
$\log(q^{r}\mp1)$ will always be very close to $r\log q$, yet not quite the
same. So the question whether \textbf{$\left.  \text{\textbf{$\textsc{(AC)}$}%
}\right.  $} holds is a priori unclear. Later, we will be able to answer this affirmatively.
\end{example}

See the Appendix, \S \ref{sect_MotivicPicture}, for the extension of the
definition of $Z_{\log}$ to motives. Most of this text can be read without
having to deal with motives.

\subsection{Pseudo-divisors}

We shall use the word `divisor' in the sense of complex manifolds, i.e.
instead of defining it to be a finite linear combination as customary in
algebraic geometry, we just demand local finiteness in the complex topology:

\begin{definition}
\label{def_PseudoDivisor}A\ \emph{pseudo-divisor} on $\mathbf{C}$ is a
set-theoretic function%
\[
\mathcal{D}:\mathbf{C}\longrightarrow\mathbf{Z}\cup\{\infty\}\text{.}%
\]
We may express this datum in the notation $\mathcal{D}=\sum_{P\in\mathbf{C}%
}n_{P}[P]$ with $n_{P}\in\mathbf{Z}\cup\{\infty\}$, reminiscent of divisors.
Define the \emph{support} of $\mathcal{D}$ by%
\[
\operatorname*{supp}\mathcal{D}:=\overline{\{P\in\mathbf{C}\mid n_{P}\neq
0\}}\text{,}%
\]
where the closure is taken with respect to the complex topology (not
Zariski!). We say that $\mathcal{D}$ is a \emph{locally finite divisor} on
$\mathbf{C}$ if the support of $\mathcal{D}$ is locally finite, i.e. for any
point $z\in\mathbf{C}$ there exists an open neighbourhood of $z$ which
contains only finitely many points in the support of $\mathcal{D}$.
\end{definition}

\begin{definition}
\label{def_PeriodicDivisor}Suppose $\mathcal{D}$ is a pseudo-divisor. We also
define a $2\pi i$-periodic version, called $\mathcal{D}^{\operatorname*{per}%
,\pm}$, of a pseudo-divisor by%
\[
\mathcal{D}^{\operatorname*{per},\pm}:=\sum_{j=1}^{\infty}T_{j}^{\ast
}\mathcal{D}\text{,}%
\]
where $T_{j}$ is the translation $z\mapsto z\pm2\pi ij$ (i.e. $T_{j}^{\ast} $
translates the divisor $\mathcal{P}$ by the multiple $2\pi ij$ in the
plane).\ We write $\mathcal{D}^{\operatorname*{per}}$ if $j$ runs through all
of $\mathbf{Z}$, so $\mathcal{D}^{\operatorname*{per}}:=\sum_{j=-\infty
}^{\infty}T_{j}^{\ast}\mathcal{D}$. If for any point $P\in\mathbf{C}$ these
definitions would require us to evaluate a sum of infinitely many non-zero
terms, we define the multiplicity of the sum at $P$ to be $n_{P}=\infty$.
\end{definition}

\section{Step I}

In this section we will begin developing the technical tools necessary to
establish the existence of an analytic continuation.

\subsection{Idea}

Our method is as follows: We want to apply Abel--Plana summation, which is a
technique which succeeds excellently in producing analytic continuations for
the polylogarithm $\operatorname*{Li}\nolimits_{s}(z)$ or the Hurwitz zeta
function $\mathbf{\zeta}(s;q)$. It belongs to the family of results around
Euler--MacLaurin summation. In brief: Firstly, we transform the power series
in question into an integral, and then we evaluate the integral in a different
fashion. If this is designed in a careful way, one may arrange to arrive at an
expression which remains sensible in a larger domain of definition than the
original power series. As we shall see, this strategy frequently succeeds for
$Z_{\log}$. Whether it does, will turn out to be controlled by a certain pseudo-divisor.

References are Olver's book \cite[Ch. 8, \S 3]{MR1429619} or Hardy's classic
treatise \cite[\S 13.14]{MR1188874}. The statement is as follows:

\begin{proposition}
[Abel, Plana]\label{Prop_AbelPlanaFormula}Suppose $h:\{s\mid\operatorname*{Re}%
s\geq0\}\rightarrow\mathbf{C}$ is a function such that the following
conditions are met:

\begin{enumerate}
\item For every integer $n\geq0$ and the vertical strip $\mathsf{S}%
_{n}:=\{s\mid n\leq\operatorname*{Re}s\leq n+1\}$, the function $h$ is
continuous in $\mathsf{S}_{n}$ and is holomorphic in the interior of
$\mathsf{S}_{n}$ and at $s=0$.

\item For every $a\geq0$, we have $\underset{b\rightarrow\pm\infty}{\lim
}\left\vert h(a+bi)\right\vert \cdot e^{-2\pi\left\vert b\right\vert }=0$ and
this convergence is uniform in $a$.

\item We have $\underset{a\rightarrow+\infty}{\lim}\int_{0}^{\infty}\left\vert
h(a\pm bi)\right\vert e^{-2\pi b}\mathrm{d}b=0$.
\end{enumerate}

Then the identity%
\[
\sum_{n=0}^{\infty}h(n)=\int_{0}^{\infty}h(s)\mathrm{d}s+\frac{1}{2}%
h(0)+i\int_{0}^{\infty}\frac{h(ib)-h(-ib)}{e^{2\pi b}-1}\mathrm{d}b
\]
holds and the integrals on the right-hand side exist.
\end{proposition}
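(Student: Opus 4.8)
The plan is to apply the residue theorem to the function $s \mapsto \frac{h(s)}{e^{2\pi i s}-1}$, which has simple poles exactly at the non-negative integers $s = n$ with residue $\frac{h(n)}{2\pi i}$, so that a suitable contour integral picks up $\sum_{n\ge 0} h(n)$. Concretely, first I would fix an integer $N\ge 1$ and a large real parameter $R>0$, and integrate $\frac{h(s)}{e^{2\pi i s}-1}$ around the boundary of the rectangle with corners $\pm iR$ and $N+1 \pm iR$ (indented by small quarter- or half-circles around the integer points $0,1,\dots,N$ on the real axis, and slightly to the right of $0$ so the pole at $s=0$ contributes a half-residue). By hypothesis (1) the integrand is holomorphic inside this region except at those poles, so the residue theorem expresses the sum of residues — namely $\frac{1}{2\pi i}(\tfrac12 h(0) + h(1) + \dots + h(N)) + \tfrac12 \cdot \text{(something near }N)$ — in terms of the four side integrals.

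Next I would let $R \to \infty$. The two horizontal sides are controlled by hypothesis (2): on the segments $\operatorname{Im} s = \pm R$ we have $|e^{2\pi i s}-1|^{-1} \le C e^{-2\pi R}$ for $R$ large, and $|h| e^{-2\pi R} \to 0$ uniformly in the real part, so these contributions vanish. This leaves the left vertical side, along $\operatorname{Re} s = 0$ (i.e. $s = ib$, $b \in \mathbf{R}$, suitably indented at $b=0$), and the right vertical side along $\operatorname{Re} s = N+1$. On the left side one splits the range $b>0$ and $b<0$; using $\frac{1}{e^{-2\pi b}-1} = -1 - \frac{1}{e^{2\pi b}-1}$ for $b>0$ to rewrite the $b<0$ piece, the $-1$ terms combine to produce $\int_0^{\infty} h(ib)\,\mathrm{d}b$ plus (after deforming back to the real axis, which is legitimate by Cauchy's theorem in the strip $\mathsf{S}_0$) the term $\int_0^{\infty} h(s)\,\mathrm{d}s$, while the remaining pieces assemble into $i\int_0^{\infty}\frac{h(ib)-h(-ib)}{e^{2\pi b}-1}\,\mathrm{d}b$. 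The small indentation at $b=0$ contributes the extra $\tfrac12 h(0)$.

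Finally I would let $N \to \infty$. Here hypothesis (3) is exactly what kills the right vertical side: on $\operatorname{Re} s = N+1$ one has $|e^{2\pi i s}-1|^{-1}$ bounded and $\int_0^{\infty}|h((N+1)\pm ib)|e^{-2\pi b}\,\mathrm{d}b \to 0$, so that integral vanishes in the limit; one also checks that the series $\sum_{n\ge 0} h(n)$ converges (its tail is dominated in the same way). Assembling the surviving pieces gives the stated identity, and the argument simultaneously shows that the two integrals on the right-hand side converge absolutely. The main obstacle I expect is bookkeeping the indentation contours around the integer points and near $b=0$ correctly — getting the factor $\tfrac12 h(0)$ and the sign of the $\frac{h(ib)-h(-ib)}{e^{2\pi b}-1}$ term to come out right — together with justifying the deformation of the vertical contour $\operatorname{Re} s = 0$ onto the real axis to recover $\int_0^\infty h(s)\,\mathrm{d}s$; everything else is routine estimation using (2) and (3). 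Since this is a classical result, I would in practice simply cite Olver \cite[Ch.~8, \S 3]{MR1429619} or Hardy \cite[\S 13.14]{MR1188874} rather than reproduce the full contour argument.
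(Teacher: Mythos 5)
The paper does not prove this proposition at all; it simply cites Olver and Hardy, so your fallback of citing those references is exactly what the paper does. Your sketch of the classical contour argument is the right strategy, but as written it has one step that fails: you integrate the single kernel $\frac{h(s)}{e^{2\pi i s}-1}$ around the \emph{full} rectangle and claim that on both horizontal sides $|e^{2\pi i s}-1|^{-1}\le C e^{-2\pi R}$. That bound is only true on the bottom side $\operatorname{Im}s=-R$, where $|e^{2\pi i s}|=e^{2\pi R}$ is large. On the top side $\operatorname{Im}s=+R$ one has $|e^{2\pi i s}|=e^{-2\pi R}\to 0$, so $|e^{2\pi i s}-1|^{-1}\to 1$; the integrand there is essentially $|h(a+iR)|$, and hypothesis (2) only controls $|h(a+iR)|e^{-2\pi R}$, so $|h|$ itself may grow (e.g. like $e^{\pi R}$) and the top edge need not vanish.

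The missing idea is the standard one: treat the upper and lower half-rectangles with \emph{different} kernels, $\frac{h(s)}{e^{-2\pi i s}-1}$ above the real axis and $\frac{h(s)}{e^{2\pi i s}-1}$ below it (equivalently, apply the identity $\frac{1}{e^{2\pi i s}-1}=-1-\frac{1}{e^{-2\pi i s}-1}$ throughout the upper half, not only on the imaginary axis as you do). With that choice both kernels decay like $e^{-2\pi|\operatorname{Im}s|}$ on their respective horizontal edges, hypothesis (2) kills them, the $-1$ term is what produces $\int_0^\infty h(s)\,\mathrm{d}s$ by Cauchy's theorem on the real segment, and hypothesis (3) disposes of the right vertical edges as you say. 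This upper/lower split is visible in the paper's own finite-box variant, Proposition \ref{prop_TinyBoxAbelPlana}, whose top edge carries the kernel $\frac{1}{1-e^{-2\pi i s}}$ and whose bottom edge carries $\frac{1}{e^{2\pi i s}-1}$; with that correction your plan is the classical proof found in the cited sources.
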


We shall shortly see that we will need to modify this method a little bit in
order to apply it to our problem.

\subsection{Setup\label{subsect_Setup}}

In this section we shall work with some running assumptions and notation
throughout: Pick $N\geq1$. Suppose for $i=1,\ldots,N$ we are given complex
numbers $\varepsilon_{i},\lambda_{i}\in\mathbf{C}$ such that $0<\left\vert
\lambda_{i}\right\vert <1$. Define a pseudo-divisor (in the sense of
Definition \ref{def_PseudoDivisor}) and depending on our data $(\varepsilon
_{i},\lambda_{i})_{i=1,\ldots,N}$ by%
\begin{equation}
\mathcal{P}:=\sum_{l=1}^{\infty}\sum_{k_{1}+\cdots+k_{N}=l}\binom{l-1}%
{k_{1},\ldots,k_{N}}\varepsilon_{1}^{k_{1}}\cdots\varepsilon_{N}^{k_{N}}%
\cdot\left[  k_{1}\log\lambda_{1}+\cdots+k_{N}\log\lambda_{N}\right]
\text{.}\label{leq_Def_P}%
\end{equation}
Or, equivalently, one might prefer: To any point $z\in\mathbf{C}$ attach a
multiplicity by%
\[
\sum_{l=1}^{\infty}\sum_{k_{1}+\cdots+k_{N}=l}\binom{l-1}{k_{1},\ldots,k_{N}%
}\varepsilon_{1}^{k_{1}}\cdots\varepsilon_{N}^{k_{N}}\cdot\delta_{z=\sum
k_{i}\log\lambda_{i}}\text{,}%
\]
where $\delta_{A}=1$ if $A$ is a true statement, and $\delta_{A}=0$ otherwise.
If there are infinitely many such summands, we just declare the multiplicity
to be $\infty$.

We define $M:=\max_{i}\{\left\vert \varepsilon_{i}\right\vert \}$. If
$r_{0}\geq1$ is any integer, we will intend to expand the logarithm in the
expression%
\[
\sum_{r\geq r_{0}}\log\left(  1-\sum_{i=1}^{N}\varepsilon_{i}\lambda_{i}%
^{r}\right)  \cdot e^{-wr}%
\]
in terms of its usual power series $\log(1-z)=-\sum_{l\geq1}l^{-1}z^{l}$.
Ideally, we would like to pick $r_{0}:=1$. However, this is not necessarily
possible because for small $r$ the expression $\sum_{i=1}^{N}\varepsilon
_{i}\lambda_{i}^{r}$ need not lie within the radius of convergence of the
logarithm series. We fix this as follows:

We compute%
\[
\left\vert \sum_{i=1}^{N}\varepsilon_{i}\lambda_{i}^{r}\right\vert \leq
M\sum_{i=1}^{N}\left\vert e^{r\log\lambda_{i}}\right\vert \leq M\cdot\max
_{i}\{\left\vert \lambda_{i}\right\vert \}^{\operatorname{Re}(r)}\cdot
\sum_{i=1}^{N}e^{-\operatorname{Im}(r)\arg(\lambda_{i})}\text{,}%
\]
where the second inequality stems from the computation $\left\vert
e^{r\log\lambda_{i}}\right\vert =\left\vert e^{x\log\left\vert \lambda
_{i}\right\vert -y\arg\lambda_{i}}\right\vert =\left\vert \lambda
_{i}\right\vert ^{\operatorname{Re}r}e^{-\operatorname{Im}(r)\arg(\lambda
_{i})}$, where we have written $r=x+iy$ with $x,y\in\mathbf{R}$. Thus, if we
pick some $K>0$ and constrain%
\[
\left\vert \operatorname{Im}(r)\right\vert \leq K\text{,}%
\]
then there exists some sufficiently large integer $r_{0}\geq1$ such that the
following two properties hold:

\begin{enumerate}
\item For all $i=1,\ldots,N$, and all $r$ with $\operatorname{Re}(r)\geq
r_{0}$ and $\left\vert \operatorname{Im}(r)\right\vert \leq K$,%
\begin{equation}
\left\vert e^{r\log\lambda_{i}}\right\vert <\frac{1}{2}\text{.}\label{la0}%
\end{equation}

\item And moreover, for all $r$ with $\operatorname{Re}(r)\geq r_{0}$ and
$\left\vert \operatorname{Im}(r)\right\vert \leq K$,%
\begin{equation}
\left\vert \sum_{i=1}^{N}\varepsilon_{i}\lambda_{i}^{r}\right\vert <\frac
{1}{2}\text{.}\label{la1}%
\end{equation}

\end{enumerate}

It is here where we have used the condition $\left\vert \lambda_{i}\right\vert
<1$. From now on, fix once and for all some $K>0$ and pick $r_{0}$ (tacitly
depending on $\varepsilon_{i},\lambda_{i},K$) so that both inequalities,
\ref{la0}, \ref{la1} hold.

\begin{remark}
\label{rmk_DecreaseK}Suppose we pick a different $K^{\prime}$ such that
$0<K^{\prime}<K$. Then the above conditions still remain valid for the same
choice of $r_{0}$.
\end{remark}

\subsection{Continuation of auxiliary functions}

With this data chosen, we shall show:

\begin{theorem}
\label{thm_BaseAnalyticContinuation}Suppose we are given $(\varepsilon
_{i},\lambda_{i})_{i=1,\ldots,N}$ and have picked $K,r_{0}$ as explained
above. Then for all $w\in\mathbf{C}$ in the open right half-plane we have the
equality%
\begin{align*}
\sum_{r=r_{0}}^{\infty}\log\left(  1-\sum_{i=1}^{N}\varepsilon_{i}\lambda
_{i}^{r}\right)  e^{-wr}  & =\frac{1}{2}\log\left(  1-\sum_{i=1}%
^{N}\varepsilon_{i}\lambda_{i}^{r_{0}}\right)  \cdot e^{-wr_{0}}\\
& +\sum_{l=1}^{\infty}\sum_{j\in\mathbf{Z}}\sum_{k_{1}+\cdots+k_{N}=l}%
\binom{l-1}{k_{1},\ldots,k_{N}}\varepsilon_{1}^{k_{1}}\cdots\varepsilon
_{N}^{k_{N}}\\
& \qquad\qquad\frac{e^{r_{0}\left(  k_{1}\log\lambda_{1}+\cdots+k_{N}%
\log\lambda_{N}-w+2\pi ij\right)  }}{k_{1}\log\lambda_{1}+\cdots+k_{N}%
\log\lambda_{N}-w+2\pi ij}\text{.}%
\end{align*}

\begin{enumerate}
\item The series on the left side is uniformly convergent in any compactum in
the open right half-plane.

\item The series on the right side is uniformly convergent in any compactum
$A\subset\mathbf{C}$ outside the support of $\mathcal{P}^{\operatorname*{per}%
}$, the periodification of the pseudo-divisor in Equation \ref{leq_Def_P}. The
support of $\mathcal{P}^{\operatorname*{per}}$ lies in the closed left half-plane.
\end{enumerate}
\end{theorem}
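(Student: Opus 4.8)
The plan is to prove the identity by applying the Abel--Plana formula of Proposition \ref{Prop_AbelPlanaFormula} to a suitable auxiliary function, and to handle the convergence claims by majorant estimates using the constants $M$ and the bound $|e^{r\log\lambda_i}|<\tfrac12$ guaranteed for $\operatorname{Re}(r)\geq r_0$. More precisely, fix $w$ in the open right half-plane and set $h(s):=\log\!\bigl(1-\sum_i \varepsilon_i\lambda_i^{\,r_0+s}\bigr)e^{-w(r_0+s)}$ for $\operatorname{Re}(s)\geq 0$, so that $\sum_{n\geq 0}h(n)=\sum_{r\geq r_0}\log(1-\sum_i\varepsilon_i\lambda_i^{r})e^{-wr}$. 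Since $|\sum_i\varepsilon_i\lambda_i^{\,r_0+s}|<\tfrac12$ on the region $\operatorname{Re}(s)\geq 0$, $|\operatorname{Im}(s)|\leq K$, the logarithm is holomorphic there; the exponential decay $e^{-w(r_0+s)}$ in the real direction and the polynomial-times-exponential control of $\log(1-\cdots)$ in the imaginary direction should verify hypotheses (1)--(3) of Abel--Plana, at least after shrinking to the strip $|\operatorname{Im}(s)|\leq K$ — which is where Remark \ref{rmk_DecreaseK} and the freedom in choosing $K$ enter. (One subtlety: the Abel--Plana hypotheses are stated on the full half-plane, whereas our bounds on $\sum_i\varepsilon_i\lambda_i^r$ were only arranged for $|\operatorname{Im}(r)|\leq K$; I expect one circumvents this either by noting that $|\lambda_i|<1$ forces $|\lambda_i^{r}|\leq |\lambda_i|^{\operatorname{Re}r}e^{K'|\operatorname{Im}r|}$ to still be small once $\operatorname{Re}r$ is large enough relative to the growth in the imaginary direction, i.e. the decay $|\lambda_i|^{\operatorname{Re}r}$ beats any fixed exponential in $|\operatorname{Im}r|$ — so in fact \eqref{la0}, \eqref{la1} can be upgraded to hold on all of $\operatorname{Re}r\geq r_0$ — or by a contour-deformation variant of Abel--Plana. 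I would use the first option.)

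Having applied the formula, the right-hand side becomes $\tfrac12 h(0)+\int_0^\infty h(s)\,\mathrm{d}s + i\int_0^\infty\frac{h(ib)-h(-ib)}{e^{2\pi b}-1}\mathrm{d}b$. The term $\tfrac12 h(0)$ is exactly the first summand $\tfrac12\log(1-\sum_i\varepsilon_i\lambda_i^{r_0})e^{-wr_0}$ on the right of the asserted identity. For the remaining two integrals, I would expand $\log(1-z)=-\sum_{l\geq 1}l^{-1}z^{l}$ with $z=\sum_i\varepsilon_i\lambda_i^{\,r_0+s}$, use the multinomial theorem $\bigl(\sum_i\varepsilon_i\lambda_i^{\,r_0+s}\bigr)^{l}=\sum_{k_1+\cdots+k_N=l}\binom{l}{k_1,\ldots,k_N}\prod_i \varepsilon_i^{k_i}\lambda_i^{(r_0+s)k_i}$, and combine the $l^{-1}$ with $\binom{l}{k_1,\ldots,k_N}$ to produce $\binom{l-1}{k_1,\ldots,k_N}$ (using $\tfrac1l\binom{l}{k_1,\ldots,k_N}=\tfrac1{k_1}\binom{l-1}{k_1-1,k_2,\ldots,k_N}=\cdots$ — here one should be a touch careful about which of the $N$ slots is decremented; the symmetric way to state it is the coefficient written in \eqref{leq_Def_P}). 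This turns $h(s)$ into $-\sum_{l}\sum_{k}\binom{l-1}{k_1,\ldots,k_N}\prod_i\varepsilon_i^{k_i}\cdot e^{(r_0+s)(\sum_i k_i\log\lambda_i)}e^{-w(r_0+s)}=-\sum\cdots e^{r_0(\mu-w)}e^{s(\mu-w)}$ where $\mu:=\sum_i k_i\log\lambda_i$. One then computes the two integrals term by term: $\int_0^\infty e^{s(\mu-w)}\mathrm{d}s=\frac{-1}{\mu-w}$ when $\operatorname{Re}(\mu-w)<0$, contributing the $j=0$ term, and $i\int_0^\infty\frac{e^{ib(\mu-w)}-e^{-ib(\mu-w)}}{e^{2\pi b}-1}\mathrm{d}b$ evaluates — via the standard expansion $\frac1{e^{2\pi b}-1}=\sum_{m\geq 1}e^{-2\pi m b}$ and $\int_0^\infty e^{-2\pi m b}e^{\pm ib\xi}\mathrm{d}b=\frac1{2\pi m\mp i\xi}$ — to $\sum_{j\neq 0}\frac{1}{\mu-w+2\pi i j}$ after the routine partial-fraction/reflection bookkeeping (this is the same computation that produces the Hurwitz zeta function from Abel--Plana). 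Summing the $j=0$ and $j\neq 0$ contributions gives $\sum_{j\in\mathbf{Z}}\frac{1}{\mu-w+2\pi ij}$, and multiplying by $e^{r_0(\mu-w)}=e^{r_0(\mu-w+2\pi ij)}$ (legitimate since $e^{2\pi i r_0 j}=1$ as $r_0\in\mathbf{Z}$) reproduces exactly the double-sum on the right of the theorem. Justifying the interchange of $\sum_{l,k}$ with the two integrals is a dominated-convergence argument powered by $|z|<\tfrac12$, hence $\sum_l l^{-1}|z|^l<\infty$ uniformly.

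For the convergence statements: claim (1), uniform convergence of the left series on compacta of the open right half-plane, is immediate — on such a compactum $\operatorname{Re}(w)\geq \delta>0$, $|\operatorname{Im}(w)|$ bounded, so $|e^{-wr}|\leq e^{-\delta r}$, while $|\log(1-\sum_i\varepsilon_i\lambda_i^r)|$ is bounded (indeed $\to 0$) for $r\geq r_0$, giving a geometric majorant. Claim (2) is the heart of the matter and the step I expect to be the main obstacle. The right-hand side, after the computation above, is $\tfrac12 h(0)$ plus $\sum_{l,k,j}\binom{l-1}{k_1,\ldots,k_N}\prod_i\varepsilon_i^{k_i}\cdot\frac{e^{r_0(\mu-w+2\pi ij)}}{\mu-w+2\pi ij}$; a point $w$ lies outside $\operatorname{supp}\mathcal{P}^{\operatorname{per}}$ exactly when no denominator $\mu-w+2\pi ij$ vanishes (and, more robustly, when $w$ is bounded away from all of them), so the issue is to bound $\sum_{l,k,j}|\binom{l-1}{k_1,\ldots,k_N}\prod_i\varepsilon_i^{k_i}|\cdot\frac{|e^{r_0(\mu-w+2\pi ij)}|}{|\mu-w+2\pi ij|}$ uniformly on a compactum $A$ disjoint from $\operatorname{supp}\mathcal{P}^{\operatorname{per}}$. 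Here $|e^{r_0(\mu-w+2\pi ij)}|=e^{r_0\operatorname{Re}(\mu-w)}$ is independent of $j$; since $\mu=\sum k_i\log\lambda_i$ with $\operatorname{Re}\log\lambda_i=\log|\lambda_i|<0$, we get $\operatorname{Re}\mu\leq -c\,l$ for $c:=\min_i(-\log|\lambda_i|)>0$, so the numerator decays like $e^{-r_0 c l}$ in $l$, and $\sum_{k_1+\cdots+k_N=l}\binom{l-1}{k_1,\ldots,k_N}\prod_i|\varepsilon_i|^{k_i}\leq (\sum_i|\varepsilon_i|)^{l}=:C^l$ (by the multinomial theorem, up to the harmless $\binom{l-1}{\cdot}$ vs $\binom{l}{\cdot}$ difference), which is beaten by $e^{r_0 c l}$ once $r_0$ is large — again using that we may enlarge $r_0$, cf.\ the setup. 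The $j$-sum $\sum_{j\in\mathbf{Z}}\frac{1}{|\mu-w+2\pi ij|}$ is only logarithmically divergent (harmonic), so one must be slightly more careful: group it against the $l$-sum, or observe that outside $\operatorname{supp}\mathcal{P}^{\operatorname{per}}$ one controls $\sum_j |\mu-w+2\pi ij|^{-1}$ by $O(\log l)$ (the relevant denominators grow linearly in $|j|$) times a uniform constant from the distance of $A$ to the support, and $e^{-r_0 c l}\cdot C^l\cdot O(\log l)$ is still summable in $l$. Finally, the support of $\mathcal{P}^{\operatorname{per}}$ lies in the closed left half-plane because every point of it is of the form $\mu+2\pi ij=\sum k_i\log\lambda_i+2\pi ij$ with $\operatorname{Re}(\mu+2\pi ij)=\sum k_i\log|\lambda_i|\leq 0$, using $|\lambda_i|<1$ and $k_i\geq 0$, $l\geq 1$. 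Assembling these estimates establishes (2), and since both series converge to the same analytic object on the overlap of their domains (the open right half-plane minus the support), the identity holds there and, by the convergence in (2), the right-hand side furnishes the continuation.
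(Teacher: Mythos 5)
There is a genuine gap, and it sits exactly at the step you flag as a "subtlety" and then dismiss. Your first option — upgrading the bounds of Equations \ref{la0}, \ref{la1} to the whole half-plane $\operatorname{Re}r\geq r_{0}$ on the grounds that "$|\lambda_i|^{\operatorname{Re}r}$ beats any fixed exponential in $|\operatorname{Im}r|$" — is false whenever some $\arg\lambda_i\neq 0$ (which is the generic case here: in the intended applications the $\lambda_i$ are inverse Weil numbers, typically non-real). On a vertical line $\operatorname{Re}r=a$ one has $|\lambda_i^{r}|=|\lambda_i|^{a}e^{-\operatorname{Im}(r)\arg\lambda_i}$, which grows exponentially as $|\operatorname{Im}r|\to\infty$ in one direction, no matter how large $a$ is; the decay in $\operatorname{Re}r$ cannot compensate growth in $\operatorname{Im}r$ along a fixed vertical line, and Abel--Plana needs holomorphy on entire vertical strips. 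Consequently $1-\sum_i\varepsilon_i\lambda_i^{s}$ acquires zeros and crosses the branch cut inside every strip $\mathsf{S}_n$, so the function $h$ cannot be extended holomorphically there and Proposition \ref{Prop_AbelPlanaFormula} is simply not applicable. This is precisely the obstruction the paper illustrates with the figures of the regions where $\operatorname{Re}(1-\lambda_1^r+\lambda_2^r)\leq 0$, and it is why the paper replaces the classical formula by the truncated-box variant, Proposition \ref{prop_TinyBoxAbelPlana}, confined to $[a,b]\times i[-K,K]$, with extra boundary integrals along $\pm iK$; the theorem is then obtained by computing all these integrals (Propositions \ref{prop_Vplusminuscases}--\ref{cor_RealAxisTerm} and the $H^{\pm}_u$ terms), letting $b\to+\infty$, and finally letting $K'\to 0$ via Remark \ref{rmk_DecreaseK}. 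Your parenthetical "contour-deformation variant" is the correct idea, but you explicitly choose the other route, and that route fails.

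Two further points in your argument inherit the same problem or are independently shaky. First, even granting the classical formula, your term-by-term evaluation of $i\int_0^\infty\frac{h(ib)-h(-ib)}{e^{2\pi b}-1}\,\mathrm{d}b$ expands $\log(1-z)$ into its power series along the whole vertical half-line, but the bound $|z|<\tfrac12$ only holds for $|\operatorname{Im}|\leq K$; beyond that the expansion diverges, so the dominated-convergence justification you invoke is unavailable. Second, in your treatment of claim (2) you estimate $\sum_{j\in\mathbf{Z}}|\mu-w+2\pi ij|^{-1}$ by $O(\log l)$; in fact this sum diverges (harmonically in $|j|$) independently of $l$, since $|e^{r_0\cdot 2\pi ij}|=1$ for integer $r_0$, so the triple series is not absolutely convergent in the order you use. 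Convergence of the $j$-sum requires either symmetric pairing of $+j$ with $-j$ or, as in the paper's route, retaining the $(e^{-2\pi K})^{j}$ decay coming from evaluation at $\operatorname{Im}r=\pm K$ before passing to the limit $K\to 0$.
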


Note that the support of $\mathcal{P}^{\operatorname*{per}}$ might be the
entire closed left half-plane, so the statement of (2) can happen to be no
stronger than (1).\medskip

The rest of the section will be devoted to the proof of this. Define%
\begin{align}
& h:[r_{0},+\infty)\times i[-K,K]\longrightarrow\mathbf{C}\text{.}%
\label{l_def_h}\\
& h(r):=\log\left(  1-\sum_{i=1}^{N}\varepsilon_{i}\lambda_{i}^{r}\right)
\cdot e^{-wr}\text{,}\label{l_def_h2}%
\end{align}
where $\lambda^{r}$ means $\lambda^{r}:=\exp(r\cdot\log\lambda)$. Inside the
half-infinite box $[r_{0},+\infty)\times i[-K,K]$, we remain inside the radius
of convergence of the logarithm series, and in particular cannot hit the
branch cut of the outer logarithm. Thus, $h$ is holomorphic (and as is easy to
see, it is even holomorphic in an open neighbourhood of this box).

We begin with the following observation:

\begin{remark}
\label{rmk_estimate_integrand}Clearly $\log(1-z)$ is bounded inside any closed
disc inside the open unit disc, say $\leq C_{0}$. Then since line \ref{la1}
implies that $\sum_{i=1}^{N}\varepsilon_{i}\lambda_{i}^{r}$ lies inside this
circle, we get%
\[
\left\vert \log\left(  1-\sum_{i=1}^{N}\varepsilon_{i}\lambda_{i}^{r}\right)
e^{-wr}\right\vert \leq C_{0}\left\vert e^{-wr}\right\vert =C_{0}\left\vert
e^{-(\operatorname{Re}w)(\operatorname{Re}r)+(\operatorname{Im}%
w)(\operatorname{Im}r)}\right\vert \text{.}%
\]
Hence, given any compactum $A\subset\mathbf{C}$ in the open right half-plane,
then for any $w\in A$ and any sequence of values $r_{n}$ with $\left\vert
\operatorname{Im}r_{n}\right\vert $ bounded (e.g. $\leq K$) and
$\operatorname{Re}(r_{n})\rightarrow+\infty$, we have exponential decay of%
\[
\left\vert h(r)\right\vert =\left\vert \log\left(  1-\sum_{i=1}^{N}%
\varepsilon_{i}\lambda_{i}^{r}\right)  e^{-wr}\right\vert
\]
towards zero.
\end{remark}

So the initial idea would be to apply Proposition \ref{Prop_AbelPlanaFormula}
to the function in Equation \ref{l_def_h2}. However, this does not quite work
because we only have a function $h:[r_{0},+\infty)\times i[-K,K]\rightarrow
\mathbf{C}$, i.e. defined on a much smaller domain as would be required. Of
course the formula in Equation \ref{l_def_h2} can be extended to make sense on
all of $\mathbf{C}$, however \textsl{not in a holomorphic way}. Indeed, the
branch cut of the logarithm will generally (depending on $\varepsilon
_{i},\lambda_{i}$) make it impossible to meet the holomorphicity condition of
Proposition \ref{Prop_AbelPlanaFormula}.

\begin{example}
The following figure shows an example of the set of those $r\in\mathbf{C}$,
where $\operatorname{Re}(1-\lambda_{1}^{r}+\lambda_{2}^{r})\leq0$ for suitable
$\lambda_{1},\lambda_{2}$ on the left, and an example with $N=3$ on the right.
As one can see, the resulting geometry is complicated.
\[%
{\includegraphics[
height=1.5359in,
width=1.3794in
]%
{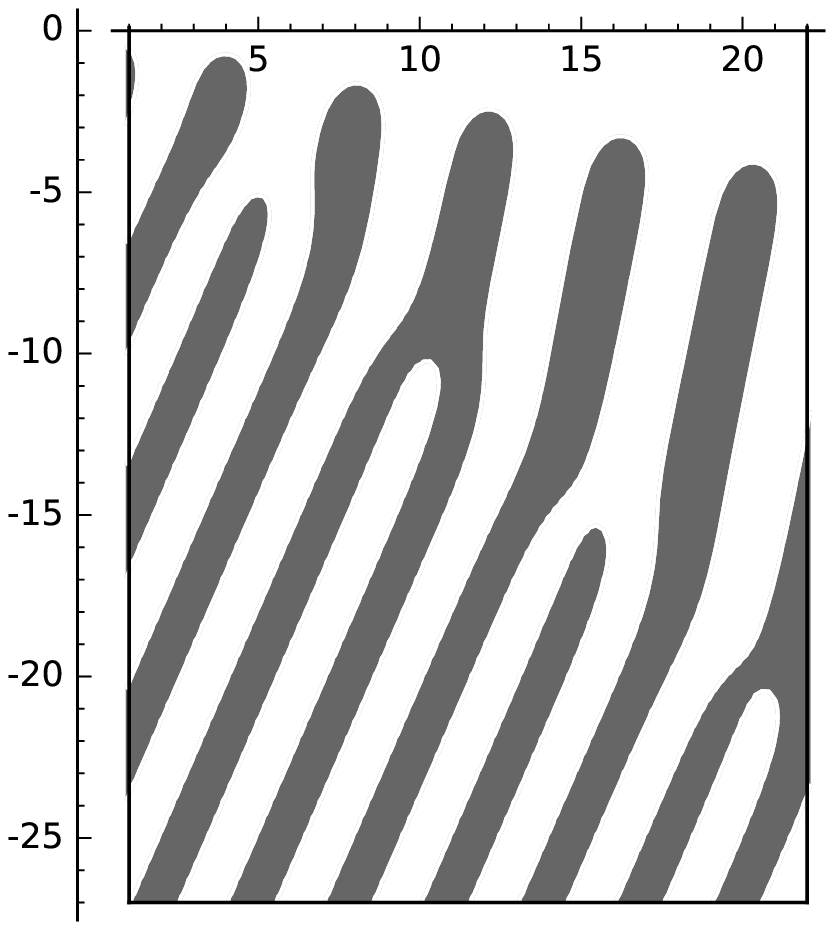}%
}
\qquad%
{\includegraphics[
height=1.5454in,
width=1.388in
]%
{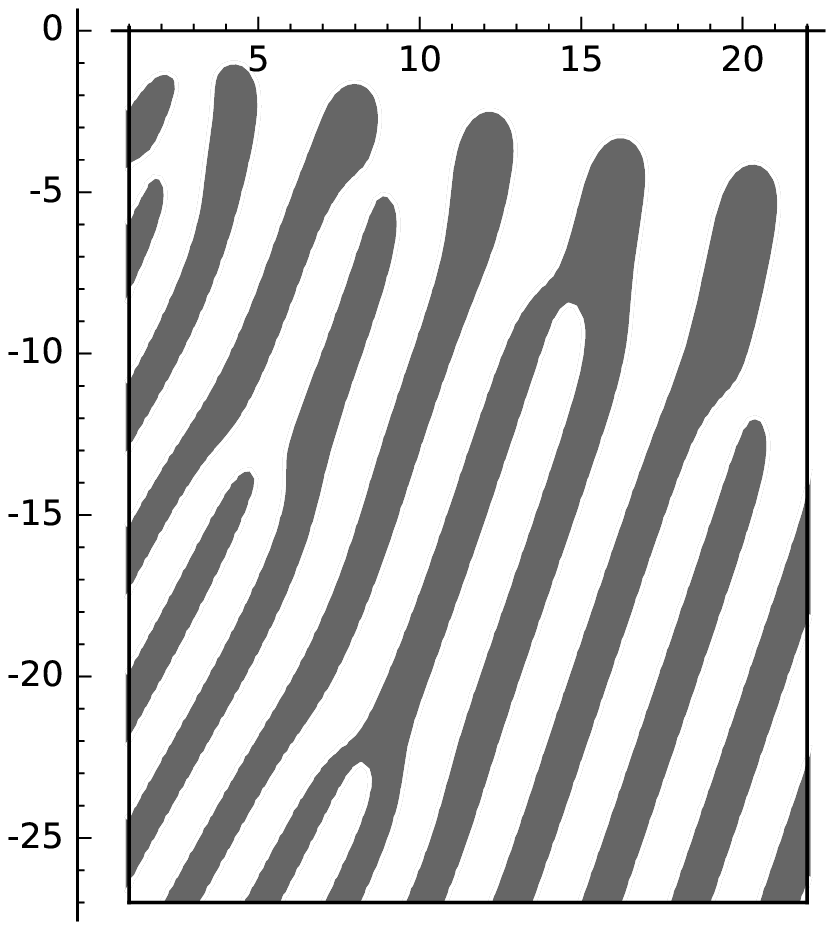}%
}
\]
The terms $\lambda_{i}^{r}=\exp(r\log\lambda_{i})$ have a periodicity built
in, caused by the $2\pi i$-periodicity of the exponential function. This
explains why we have so many distinct connected components. Correspondingly,
we get many pairwise disjoint copies of the logarithm branch cut, which lie
inside these copies of the negative half-plane.
\end{example}

\begin{example}
As a complex plot, the function $r\mapsto\log(1-\lambda_{1}^{r}+\lambda
_{2}^{r})$ can, depending on $\lambda_{1},\lambda_{2}$, for example look as
follows:%
\[%
{\includegraphics[
natheight=3.657700in,
natwidth=4.716900in,
height=2.0821in,
width=2.6826in
]%
{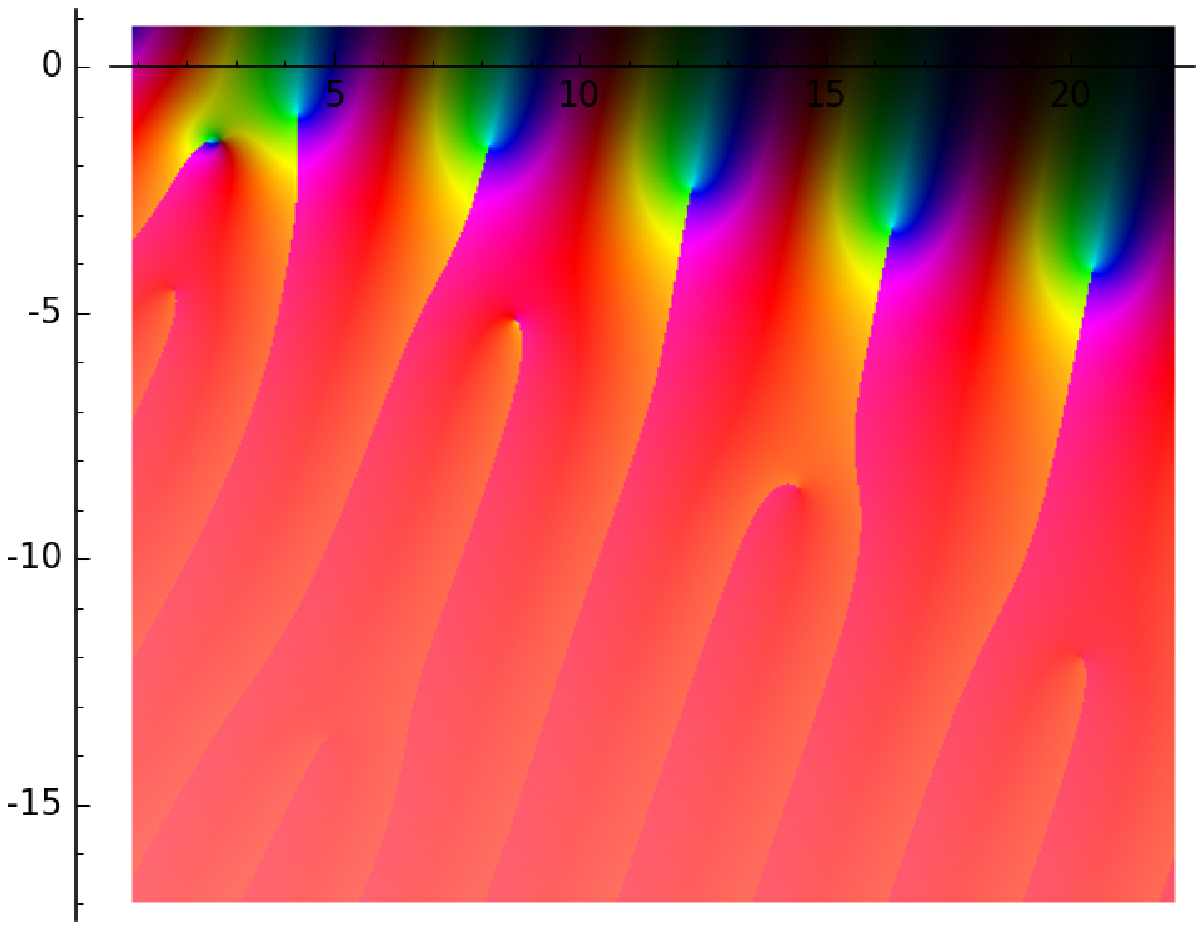}%
}
\]
One can see the jumps at the many copies of the logarithmic branch cut. Any
contour integration running through this territory is necessarily problematic.
It is best to avoid such regions altogether.
\end{example}

We shall therefore work with a more complicated variant of the Abel--Plana
method. Suppose $a\leq b$ are integers with $r_{0}\leq a$:

\begin{proposition}
\label{prop_TinyBoxAbelPlana}Suppose $h:[a,+\infty)\times i[-K,K]\rightarrow
\mathbf{C}$ is any function which admits a holomorphic continuation to an open
neighbourhood of this box. Then for all integers $1\leq a<b$, we have
\begin{align*}
\sum_{r=a}^{b}h(r)  & =\frac{1}{2}h(a)+\frac{1}{2}h(b)+\int_{a}^{b}%
h(s)\mathrm{d}s\\
& +i\int_{0}^{K}\frac{h(a+iy)-h(a-iy)}{e^{2\pi y}-1}\mathrm{d}y-i\int_{0}%
^{K}\frac{h(b+iy)-h(b-iy)}{e^{2\pi y}-1}\mathrm{d}y\\
& -\int_{a+iK}^{b+iK}\frac{h(s)}{1-e^{-2\pi is}}\mathrm{d}s+\int_{a-iK}%
^{b-iK}\frac{h(s)}{e^{2\pi is}-1}\mathrm{d}s\text{.}%
\end{align*}

\end{proposition}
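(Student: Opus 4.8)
The plan is to derive this ``finite box'' version of Abel--Plana by a direct contour-integration argument rather than by quoting Proposition~\ref{Prop_AbelPlanaFormula}, since we only control $h$ on the strip $[a,+\infty)\times i[-K,K]$ and the hypotheses on decay at $+\infty$ are not available here. The starting point is the classical observation that $\frac{1}{e^{2\pi is}-1}$ has simple poles at every integer $s=n$ with residue $\frac{1}{2\pi i}$, so that for a function $g$ holomorphic on a neighbourhood of a rectangle $R$ with corners $a-iK,\,b-iK,\,b+iK,\,a+iK$ (with $a,b$ integers), the residue theorem gives $\sum_{n=a}^{b}\operatorname*{Res}_{s=n}\!\big(\tfrac{g(s)}{e^{2\pi is}-1}\big)$ on the one hand, and a contour integral over $\partial R$ on the other. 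The subtlety is that the integers $a$ and $b$ lie \emph{on} the vertical sides of $R$; I would handle this in the standard way by indenting the contour with small semicircles around $s=a$ and $s=b$, which contributes exactly $\tfrac12 h(a)$ and $\tfrac12 h(b)$ (half-residues) after letting the radius shrink to zero. This is where one must be a little careful about orientations and signs, and I expect the bookkeeping of these half-residue terms together with the signs on the four sides of the rectangle to be the main fiddly point, though it is entirely routine.

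Concretely, I would apply this to $g=h$ and split $\oint_{\partial R}$ into its four edges. The bottom edge $[a-iK,b-iK]$ and top edge $[b+iK,a+iK]$ give, after orienting consistently, the two contour integrals $\int_{a-iK}^{b-iK}\frac{h(s)}{e^{2\pi is}-1}\mathrm{d}s$ and $-\int_{a+iK}^{b+iK}\frac{h(s)}{1-e^{-2\pi is}}\mathrm{d}s$ appearing in the statement; note the top edge is rewritten using $\frac{1}{e^{2\pi is}-1}=\frac{-1}{1-e^{-2\pi is}}\cdot e^{-2\pi is}\cdot(\cdots)$ — more precisely, on the top edge one writes $\frac{1}{e^{2\pi is}-1}=-1-\frac{1}{e^{-2\pi is}-1}$ and absorbs the constant $-1$ into the ``$\int h$'' term, which is exactly the mechanism by which the naive sum $\sum h(n)$ acquires its $\int_a^b h(s)\,\mathrm{d}s$ main term. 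The left edge $s=a+iy$, $y\in[-K,K]$, contributes $\int_{0}^{K}$ of the difference quotient $\frac{h(a+iy)-h(a-iy)}{e^{2\pi y}-1}$ (after combining the $y>0$ and $y<0$ parts and using $e^{2\pi i(a+iy)}-1=e^{-2\pi y}-1$ since $a\in\mathbf{Z}$); similarly for the right edge $s=b+iy$, with the opposite sign. Matching the $i\int_0^K$ prefactors is again just tracking $\mathrm{d}s=i\,\mathrm{d}y$ and the residue normalisation $2\pi i$.

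Assembling all seven pieces — the two half-residue boundary terms $\tfrac12 h(a)+\tfrac12 h(b)$, the main term $\int_a^b h(s)\,\mathrm{d}s$ extracted from the constant $-1$ on the top edge, the two vertical difference-quotient integrals, and the two horizontal remainder integrals — against the residue sum $\sum_{r=a}^{b}h(r)$ (interior integers contributing full residues, endpoints half) yields precisely the claimed identity. Since $h$ is holomorphic on an open neighbourhood of the closed box by hypothesis, all integrals are over compact contours and there are no convergence issues whatsoever; everything is a finite exact computation. The only genuine input beyond the residue theorem is the identity $e^{\pm 2\pi i n}=1$ for $n\in\mathbf{Z}$, which is what forces $a,b$ to be \emph{integers} and is exactly why the vertical-edge integrands simplify to the stated real-variable form. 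I would present the rectangle-with-indentations picture, state the residue computation, and then just collect terms; the proof is short once the contour is drawn correctly.
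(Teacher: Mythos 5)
Your strategy --- the residue theorem for $h(s)/(e^{2\pi i s}-1)$ over the rectangle $[a,b]\times i[-K,K]$, indented at the boundary poles $s=a,b$ so that they contribute half-residues --- is precisely the Abel--Plana-style contour argument that the paper itself does not spell out but delegates to its reference (the cited Prop.~4.3 of \cite{torsionhomology}), so in approach you are reproving what the paper outsources, and the plan does yield the identity. One concrete correction to your bookkeeping, though: in your single-rectangle setup the constant $-1$ extracted on the top edge via $\frac{1}{e^{2\pi i s}-1}=-1+\frac{1}{1-e^{-2\pi i s}}$ produces $\int_{a+iK}^{b+iK}h(s)\,\mathrm{d}s$, \emph{not} $\int_a^b h(s)\,\mathrm{d}s$; and the vertical edges do not contribute only the difference-quotient integrals. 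Splitting, say, the right edge at $y=0$ and applying $\frac{1}{e^{-2\pi y}-1}=-1-\frac{1}{e^{2\pi y}-1}$ on the upper half gives, besides $-i\int_0^K\frac{h(b+iy)-h(b-iy)}{e^{2\pi y}-1}\mathrm{d}y$, an extra plain term $-\int_b^{b+iK}h(s)\,\mathrm{d}s$, and similarly the left edge gives $+\int_a^{a+iK}h(s)\,\mathrm{d}s$. These two leftovers together with the top-edge plain integral combine, by Cauchy's theorem on the upper half-box where $h$ is holomorphic, into exactly $\int_a^b h(s)\,\mathrm{d}s$; your ``seven pieces'' enumeration omits them, and without this step the identity as you assemble it would have the main term at height $K$ instead of on $[a,b]$.

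Two smaller remarks. First, you can avoid the contour shift entirely by running the residue argument separately on the two half-boxes $[a,b]\times i[0,K]$ and $[a,b]\times i[-K,0]$, using the kernel $1/(e^{-2\pi i s}-1)$ in the upper half and $1/(e^{2\pi i s}-1)$ in the lower half (these are the kernels with the right behaviour in each half-plane); the two traversals of the shared real segment then carry kernels summing to $-1$ and produce $\int_a^b h(s)\,\mathrm{d}s$ directly, which is the classical normalization in Olver and matches the signs in the statement with less rearranging. Second, after the $y>0$ and $y<0$ halves are combined, the integrands $\bigl(h(a\pm iy)-h(a\mp iy)\bigr)/(e^{2\pi y}-1)$ are bounded near $y=0$, so the principal values you implicitly take at the indented poles cause no trouble; it is worth saying this explicitly since the individual half-edge integrals are divergent before recombination.
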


\begin{proof}
This is proven in a similar fashion as the original result, see for example
\cite[Ch. 8, \S 3]{MR1429619}. A detailed proof is given as \cite[Prop.
4.3]{torsionhomology}.
\end{proof}

Next, using Prop. \ref{prop_TinyBoxAbelPlana} with the holomorphic function
of\ Equation \ref{l_def_h}, we obtain%
\begin{align}
& \sum_{r=a}^{b}\log\left(  1-\sum_{i=1}^{N}\varepsilon_{i}\lambda_{i}%
^{r}\right)  e^{-wr}=\frac{1}{2}\log\left(  1-\sum_{i=1}^{N}\varepsilon
_{i}\lambda_{i}^{a}\right)  \cdot e^{-wa}\nonumber\\
& \qquad\qquad\qquad+\frac{1}{2}\log\left(  1-\sum_{i=1}^{N}\varepsilon
_{i}\lambda_{i}^{b}\right)  \cdot e^{-wb}+\int_{a}^{b}h(s)\mathrm{d}%
s\label{l_ModifiedAbelPlana}\\
& \qquad\qquad\qquad+i\int_{0}^{K}\frac{h(a+iy)-h(a-iy)}{e^{2\pi y}%
-1}\mathrm{d}y-i\int_{0}^{K}\frac{h(b+iy)-h(b-iy)}{e^{2\pi y}-1}%
\mathrm{d}y\nonumber\\
& \qquad\qquad\qquad-\int_{a+iK}^{b+iK}\frac{h(s)}{1-e^{-2\pi is}}%
\mathrm{d}s+\int_{a-iK}^{b-iK}\frac{h(s)}{e^{2\pi is}-1}\mathrm{d}%
s\text{.}\nonumber
\end{align}
It remains to compute these integrals. We will first treat them for the
summation from $a$ to $b$ on the left-hand side, and then afterwards let
$b\rightarrow+\infty$. We begin with the integrals which appear in the last
line of the above equation:

\begin{proposition}
\label{prop_Vplusminuscases}Pick a sign \textquotedblleft$+/-$%
\textquotedblright. Suppose the pseudo-divisor $\mathcal{P}%
^{\operatorname*{per},\pm}$ (see Definition \ref{def_PeriodicDivisor}) is
locally finite. Then the integral%
\[
V^{+}(w):=\int_{a+iK}^{b+iK}\frac{\log\left(  1-\sum_{i=1}^{N}\varepsilon
_{i}\lambda_{i}^{r}\right)  \cdot e^{-wr}}{1-e^{-2\pi ir}}\mathrm{d}r\text{,}%
\]
resp.%
\[
V^{-}(w):=\int_{a-iK}^{b-iK}\frac{\log\left(  1-\sum_{i=1}^{N}\varepsilon
_{i}\lambda_{i}^{r}\right)  \cdot e^{-wr}}{e^{2\pi ir}-1}\mathrm{d}r\text{,}%
\]
exists and defines a holomorphic function on the open right half-plane. A
meromorphic analytic continuation to the entire complex plane is given by%
\begin{align*}
\tilde{V}^{\pm}(w)  & =\pm\sum_{l=1}^{\infty}\sum_{k_{1}+\cdots+k_{N}=l}%
\binom{l-1}{k_{1},\ldots,k_{N}}\varepsilon_{1}^{k_{1}}\cdots\varepsilon
_{N}^{k_{N}}\\
& \qquad\qquad\sum_{j=1}^{\infty}\left.  \frac{e^{r(k_{1}\log\lambda
_{1}+\cdots+k_{N}\log\lambda_{N}-w\pm2\pi ij)}}{k_{1}\log\lambda_{1}%
+\cdots+k_{N}\log\lambda_{N}-w\pm2\pi ij}\right\vert _{r=a\pm iK}^{r=b\pm iK}%
\end{align*}
All its poles have order $1$ and lie precisely at the support of
$\mathcal{P}^{\operatorname*{per},\pm}$. This series converges uniformly in
any compactum in $\mathbf{C}$ which avoids the support of $\mathcal{P}%
^{\operatorname*{per},\pm}$.
\end{proposition}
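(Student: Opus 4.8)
The plan is to compute the contour integral $V^{\pm}(w)$ directly, by expanding the logarithm in its power series (which is legitimate on the contour by our choice of $r_0\le a$ and the estimate \eqref{la1}, since the imaginary part on the horizontal segments is $\pm K$ and $|\operatorname{Im} r|=K$), then expanding the geometric kernel $\tfrac{1}{1-e^{-2\pi ir}}$ (resp. $\tfrac{1}{e^{2\pi ir}-1}$) in a geometric series that converges on the relevant horizontal line, and finally integrating term by term. First I would write
\[
\log\Bigl(1-\sum_{i=1}^N\varepsilon_i\lambda_i^r\Bigr)=-\sum_{l\ge 1}\frac{1}{l}\Bigl(\sum_i\varepsilon_i\lambda_i^r\Bigr)^l=-\sum_{l\ge 1}\frac{1}{l}\sum_{k_1+\cdots+k_N=l}\binom{l}{k_1,\ldots,k_N}\varepsilon_1^{k_1}\cdots\varepsilon_N^{k_N}e^{r(k_1\log\lambda_1+\cdots+k_N\log\lambda_N)},
\]
and note $\tfrac1l\binom{l}{k_1,\ldots,k_N}=\binom{l-1}{k_1,\ldots,k_N}$ whenever all $k_i\ge 0$ — this is exactly the combinatorial identity that produces the multinomial coefficient $\binom{l-1}{k_1,\ldots,k_N}$ appearing in the statement, and explains the sign. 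On the upper segment ($+$ case) $\operatorname{Im} r = K>0$, so $|e^{-2\pi i r}|=e^{-2\pi K}<1$ and $\tfrac{1}{1-e^{-2\pi ir}}=\sum_{j\ge 0}e^{-2\pi i j r}$; the $j=0$ term combined with the sum over $l$ will turn out to be the part that is \emph{not} holomorphic beyond the right half-plane for generic data, but when we add the symmetric contribution and track everything carefully one sees that the statement only claims the continuation of $V^{\pm}$ itself, so I would simply keep the full double/triple series. Multiplying the two series, each resulting summand is $c\cdot e^{r(k_1\log\lambda_1+\cdots+k_N\log\lambda_N - w \pm 2\pi i j)}$ for an explicit constant $c$, and $\int e^{r\mu}\,\mathrm dr = \mu^{-1}e^{r\mu}$ evaluated between the endpoints $a\pm iK$ and $b\pm iK$; this produces exactly the claimed formula for $\tilde V^{\pm}(w)$.

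Second, I would verify convergence and holomorphy. For $w$ in the open right half-plane the original integrals converge absolutely and locally uniformly (the integrand is bounded by Remark \ref{rmk_estimate_integrand}–type estimates together with $|1-e^{-2\pi ir}|\ge 1-e^{-2\pi K}>0$ on the contour), so $V^{\pm}$ is holomorphic there. For the series $\tilde V^{\pm}$: fix a compactum $A\subset\mathbf C$ disjoint from $\operatorname{supp}\mathcal P^{\operatorname{per},\pm}$. The numerators $e^{r\mu}|_{r=a\pm iK}^{b\pm iK}$ are bounded on $A$ by something like $C\,e^{b\operatorname{Re}\mu}$ (a term $e^{-b\operatorname{Re}w}\cdot(\text{bounded})$ when $\operatorname{Re}w>0$; for the continuation one uses that $\operatorname{Re}(k_1\log\lambda_1+\cdots)<0$ strictly because $|\lambda_i|<1$, so $\operatorname{Re}\mu\to-\infty$ as $l\to\infty$, giving geometric decay in $l$), the coefficients $\binom{l-1}{k_1,\ldots,k_N}\varepsilon_1^{k_1}\cdots\varepsilon_N^{k_N}$ sum over $k_1+\cdots+k_N=l$ to at most $M^l\binom{l-1+N-1}{N-1}\cdot(\text{const})$ where $M=\max_i|\varepsilon_i|$, and the denominators $|\mu|=|k_1\log\lambda_1+\cdots+k_N\log\lambda_N-w\pm2\pi ij|$ are bounded below on $A$ by the distance to the (locally finite!) support — here one uses local finiteness of $\mathcal P^{\operatorname{per},\pm}$ to get a uniform lower bound on the compactum, and for large $j$ the denominator grows like $2\pi|j|$, which absorbs the sum over $j$. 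Combining, the triple series is dominated by a convergent one, uniformly on $A$; hence $\tilde V^{\pm}$ is holomorphic on $\mathbf C\setminus\operatorname{supp}\mathcal P^{\operatorname{per},\pm}$, and each term $\mu^{-1}e^{b\pm iK)\mu}-\mu^{-1}e^{(a\pm iK)\mu}$ is meromorphic in $w$ with a simple pole exactly where $\mu=0$, i.e. at $w=k_1\log\lambda_1+\cdots+k_N\log\lambda_N\pm2\pi ij$, which is precisely $\operatorname{supp}\mathcal P^{\operatorname{per},\pm}$; the orders are all $1$ because distinct $(l,k_\bullet,j)$ giving the same pole location contribute residues that are generically nonzero — more carefully, one checks that the poles are simple since $e^{b\mu},e^{a\mu}\to 1$ as $\mu\to 0$ so the residue of each summand is $e^{(b\pm iK)\cdot 0}-e^{(a\pm iK)\cdot 0}=0$... wait, that vanishes, so I would instead argue that the \emph{sum} of residues over all summands hitting a given pole $P$ is $n_P\neq 0$ (the multiplicity of $\mathcal P^{\operatorname{per},\pm}$ at $P$), which is exactly how the pseudo-divisor was designed; one must be slightly careful that the residue of a single term $\mu^{-1}(e^{b\mu}-e^{a\mu})$ at $\mu=0$ is $b-a$, not $0$ — I misspoke — and the collected residue at $P$ is $(b-a)$ times the sum of the multinomial coefficients, which is $(b-a)\,n_P\neq 0$.

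Third, I would argue the identity $V^{\pm}=\tilde V^{\pm}$ on the overlap. Both sides are holomorphic on the open right half-plane (the right side by the convergence estimate above, noting $\operatorname{Re}w>0$ makes all the bounds work), and the termwise computation shows they agree there as formal manipulations of absolutely convergent series — the interchange of $\sum_l$, $\sum_j$, and $\int_a^b$ is justified by Fubini/Tonelli using the same domination. By the identity theorem, $\tilde V^{\pm}$ is then the unique meromorphic continuation of $V^{\pm}$ to all of $\mathbf C$. The main obstacle I expect is the bookkeeping in the convergence estimate: one must simultaneously control (i) the multinomial coefficients summed over compositions of $l$, against (ii) the exponential decay $e^{b\operatorname{Re}\mu}$ coming from $|\lambda_i|<1$ (for the continuation region, where $\operatorname{Re}w$ may be negative, this decay in $l$ is the \emph{only} thing saving convergence — it is essential that \emph{every} $\log\lambda_i$ has strictly negative real part and that $l=k_1+\cdots+k_N\to\infty$ forces $\operatorname{Re}\mu=\sum k_i\log|\lambda_i|\le -l\min_i(-\log|\lambda_i|)\to-\infty$), and (iii) the lower bound on $|\mu|$ away from the support, which is where local finiteness of $\mathcal P^{\operatorname{per},\pm}$ is indispensable and is precisely why that hypothesis is imposed. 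The subtlety about pole orders — that individual summands have residue $b-a$ at $\mu=0$ and these add up to $(b-a)n_P$ rather than canceling — also deserves a careful line, since it is what pins the poles exactly to $\operatorname{supp}\mathcal P^{\operatorname{per},\pm}$ with order $1$.
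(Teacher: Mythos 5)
Your overall route is the paper's: expand the logarithm into its power series on the horizontal segment, expand the periodic kernel into a geometric series, integrate term by term, and identify the result with $V^{\pm}$ on the right half-plane before continuing. But the crucial expansion step is carried out on the wrong quantity. On the upper segment $r=x+iK$ one has $\left\vert e^{-2\pi ir}\right\vert =e^{+2\pi K}>1$, not $e^{-2\pi K}<1$, so your series $\frac{1}{1-e^{-2\pi ir}}=\sum_{j\geq0}e^{-2\pi ijr}$ diverges everywhere on the contour. The correct manipulation is to factor $\frac{1}{1-e^{-2\pi ir}}=\frac{-e^{2\pi ir}}{1-e^{2\pi ir}}=-\sum_{j\geq1}e^{2\pi ijr}$, which converges because $\left\vert e^{2\pi ir}\right\vert =e^{-2\pi K}<1$ there (and symmetrically, on the lower segment one expands $\frac{1}{e^{2\pi ir}-1}$ in powers of $e^{-2\pi ir}$). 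This is precisely where the summation range $j\geq1$, the overall sign $\pm$, and the term $\pm2\pi ij$ in the exponent of the stated $\tilde{V}^{\pm}$ come from. The spurious ``$j=0$ term'' you notice and then wave away (``I would simply keep the full double/triple series'') is an artifact of the divergent expansion; with the correct one there is no such term and the computation lands exactly on the claimed formula. As written, your derivation neither converges nor reproduces the statement, so this is a genuine gap rather than a bookkeeping issue.

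Two further points. Your convergence argument in $j$ is also not enough: a lower bound $\left\vert \mu\right\vert \gtrsim2\pi\left\vert j\right\vert $ only gives a harmonic-type bound, and $\sum_{j}1/j$ diverges. The actual mechanism (and the paper's estimate) is that $\operatorname{Im}r=\pm K\neq0$ at both endpoints, so the numerators carry the factor $\left(  e^{-2\pi K}\right)  ^{j}$, giving exponential decay in $j$; together with $\left\vert e^{r\log\lambda_{n}}\right\vert ^{k_{n}}<\left(  \tfrac{1}{2}\right)  ^{k_{n}}$, guaranteed by the choice of $r_{0}$, the whole triple sum is dominated by convergent geometric series uniformly on a compactum avoiding $\operatorname*{supp}\mathcal{P}^{\operatorname*{per},\pm}$. (Under your expansion the numerator would instead grow like $e^{+2\pi Kj}$, so the two errors compound.) Finally, your residue discussion is muddled: the function $\mu^{-1}\left(  e^{(b\pm iK)\mu}-e^{(a\pm iK)\mu}\right)  $ has a removable singularity at $\mu=0$ (its limit is $b-a\pm0$, its residue is $0$), so each individual summand is entire in $w$, and the singular behaviour of $\tilde{V}^{\pm}$ at $\operatorname*{supp}\mathcal{P}^{\operatorname*{per},\pm}$ cannot be read off by adding residues of single terms; it is a property of the full series (and becomes transparent only after the limit $b\rightarrow+\infty$, where only one endpoint survives and each term genuinely acquires a simple pole).
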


\begin{proof}
Note that for $r=x\pm iy$ with $x,y\in\mathbf{R}$ and $y>0$, we get
$\left\vert e^{2\pi i(x\pm iy)}\right\vert =e^{\mp2\pi y}$. Rewrite $V^{+}$ as%
\[
\int_{a+iK}^{b+iK}\frac{\log\left(  1-\sum_{i=1}^{N}\varepsilon_{i}\lambda
_{i}^{r}\right)  \cdot e^{-wr}}{1-e^{-2\pi ir}}\mathrm{d}r=\int_{a+iK}%
^{b+iK}\frac{\log\left(  1-\sum_{i=1}^{N}\varepsilon_{i}\lambda_{i}%
^{r}\right)  \cdot e^{-wr}}{-e^{-2\pi ir}(1-e^{2\pi ir})}\mathrm{d}r
\]
and since all throughout the path of integration we have $\operatorname{Im}%
r>0$, we have $\left\vert e^{2\pi ir}\right\vert <1$. Thus, we may unravel the
term $1-e^{2\pi ir}$ in terms of a geometric series, yielding%
\[
=-\sum_{j=1}^{\infty}\int_{a+iK}^{b+iK}\log\left(  1-\sum_{i=1}^{N}%
\varepsilon_{i}\lambda_{i}^{r}\right)  \cdot e^{(-w+2\pi ij)r}\mathrm{d}%
r\text{.}%
\]
For $V^{-}$ one proceeds analogously (in this case we will expand $1-e^{-2\pi
ir}$ as a geometric series. It converges since $r$ is in the open lower
half-plane and then $\left\vert e^{-2\pi ir}\right\vert <1$). Thus, to handle
both cases, it remains to treat%
\[
\int_{a\pm iK}^{b\pm iK}\log\left(  1-\sum_{i=1}^{N}\varepsilon_{i}\lambda
_{i}^{r}\right)  \cdot e^{(-w\pm2\pi ij)r}\mathrm{d}r\text{.}%
\]
As $r\in\lbrack a,b]\times\{\pm iK\}$, we have $\left\vert \operatorname{Im}%
r\right\vert \leq K$ and so we are inside the radius of convergence of the
logarithm series. We get%
\begin{align*}
& =-\sum_{l=1}^{\infty}\frac{1}{l}\int_{a\pm iK}^{b\pm iK}\left(  \sum
_{i=1}^{N}\varepsilon_{i}\lambda_{i}^{r}\right)  ^{l}\cdot e^{(-w\pm2\pi
ij)r}\mathrm{d}r\\
& =-\sum_{l=1}^{\infty}\sum_{k_{1}+\cdots+k_{N}=l}\binom{l-1}{k_{1}%
,\ldots,k_{N}}\varepsilon_{1}^{k_{1}}\cdots\varepsilon_{N}^{k_{N}}\int_{a\pm
iK}^{b\pm iK}(\lambda_{1}^{r})^{k_{1}}\cdots(\lambda_{N}^{r})^{k_{N}}\cdot
e^{(-w\pm2\pi ij)r}\mathrm{d}r\text{,}%
\end{align*}
but this integral is easy to compute: We find%
\[
\int_{a\pm iK}^{b\pm iK}(\lambda_{1}^{r})^{k_{1}}\cdots(\lambda_{N}%
^{r})^{k_{N}}\cdot e^{(-w\pm2\pi ij)r}\mathrm{d}r=\left.  \frac{e^{r(k_{1}%
\log\lambda_{1}+\cdots+k_{N}\log\lambda_{N}-w\pm2\pi ij)}}{k_{1}\log
\lambda_{1}+\cdots+k_{N}\log\lambda_{N}-w\pm2\pi ij}\right\vert _{r=a\pm
iK}^{r=b\pm iK}\text{.}%
\]
Now,%
\begin{align*}
& \int_{a\pm iK}^{b\pm iK}\log\left(  1-\sum_{i=1}^{N}\varepsilon_{i}%
\lambda_{i}^{r}\right)  \cdot e^{(-w\pm2\pi ij)r}\mathrm{d}r\\
& =-\sum_{l=1}^{\infty}\sum_{k_{1}+\cdots+k_{N}=l}\binom{l-1}{k_{1}%
,\ldots,k_{N}}\varepsilon_{1}^{k_{1}}\cdots\varepsilon_{N}^{k_{N}}\\
& \qquad\qquad\sum_{j=1}^{\infty}\left.  \frac{e^{r(k_{1}\log\lambda
_{1}+\cdots+k_{N}\log\lambda_{N}-w\pm2\pi ij)}}{k_{1}\log\lambda_{1}%
+\cdots+k_{N}\log\lambda_{N}-w\pm2\pi ij}\right\vert _{r=a\pm iK}^{r=b\pm
iK}\text{.}%
\end{align*}
We still need to settle the uniform convergence: Let $A\subset\mathbf{C}$ be
any compactum avoiding the support of $\mathcal{P}^{\operatorname*{per},\pm}$.
This means that for all $w\in A$ the denominator $k_{1}\log\lambda_{1}%
+\cdots+k_{N}\log\lambda_{N}-w\pm2\pi ij$ is non-zero and thus we can give a
uniform lower bound $>0$ valid on all of $A$. Thus, for our claim it suffices
to handle the numerators. We have the general formula $\left\vert
e^{\alpha\beta}\right\vert =e^{(\operatorname{Re}\alpha)(\operatorname{Re}%
\beta)-(\operatorname{Im}\alpha)(\operatorname{Im}\beta)}$ for all
$\alpha,\beta\in\mathbf{C}$. It yields%
\[
\left\vert e^{(x\pm iy)(k_{1}\log\lambda_{1}+\cdots+k_{N}\log\lambda_{N}%
-w\pm2\pi ij)}\right\vert =(e^{-2\pi y})^{j}\cdot e^{-x\cdot\operatorname{Re}%
w}\cdot e^{\pm y\cdot\operatorname{Im}w}\cdot\prod_{n=1}^{N}e^{k_{n}\left(
x\log\left\vert \lambda_{n}\right\vert \mp y\arg\lambda_{n}\right)  }%
\]
and this can be rewritten as $=(e^{-2\pi y})^{j}\cdot e^{-x\cdot
\operatorname{Re}w}\cdot e^{\pm y\cdot\operatorname{Im}w}\cdot\prod_{n=1}%
^{N}\left(  e^{x\log\left\vert \lambda_{n}\right\vert \mp y\arg\lambda_{n}%
}\right)  ^{k_{n}}$. So far we had worked with $r=x\pm iy$ and $y>0$. We may
instead write $r=x+iy $ and allow all $y\in\mathbf{R}$, $y\neq0$. Then this
expression becomes%
\begin{align}
& =(e^{-2\pi\left\vert y\right\vert })^{j}\cdot e^{-x\cdot\operatorname{Re}%
w}\cdot e^{y\cdot\operatorname{Im}w}\cdot\prod_{n=1}^{N}\left(  e^{x\log
\left\vert \lambda_{n}\right\vert -y\arg\lambda_{n}}\right)  ^{k_{n}%
}\label{lmips1}\\
& =(e^{-2\pi\left\vert \operatorname{Im}r\right\vert })^{j}\cdot
e^{-(\operatorname{Re}r)\cdot(\operatorname{Re}w)}\cdot e^{(\operatorname{Im}%
r)\cdot(\operatorname{Im}w)}\cdot\prod_{n=1}^{N}\left\vert e^{r\log\lambda
_{n}}\right\vert ^{k_{n}}\nonumber
\end{align}
Now, by our choice of $r_{0}$, we have $\left\vert e^{r\log\lambda_{n}%
}\right\vert <\frac{1}{2}$ for all $r\in\lbrack r_{0},+\infty)\times i[-K,K]$
(see Equation \ref{la0}). We observe:\ We have a sum over $k_{1},\ldots,k_{N}$
and $j$. Since $\left\vert \operatorname{Im}r\right\vert >0$ (remember that we
only need the cases $r=a\pm iK$ and $r=b\pm iK$, so we even have $\left\vert
\operatorname{Im}r\right\vert =K$, but this stronger statement is not truly
needed here), the term $(e^{-2\pi\left\vert \operatorname{Im}r\right\vert
})^{j}$ guarantees exponential decay in $j$, and the product term on the right
guarantees exponential decay in each of $k_{1},\ldots,k_{N}$. As a result, the
entire sum over $k_{1},\ldots,k_{N}$ and $j$ can be dominated by convergent
geometric series in each of these variables. Thus, the numerators converge
uniformly in $A$.
\end{proof}

Next, we will use the previous result and let $b\rightarrow+\infty$:

\begin{corollary}
\label{cor_Vplusminuscases}In every compactum $A\subset\mathbf{C}$ inside the
open right half-plane, we have%
\begin{align*}
& \int_{a+iK}^{\infty+iK}\frac{\log\left(  1-\sum_{i=1}^{N}\varepsilon
_{i}\lambda_{i}^{r}\right)  \cdot e^{-wr}}{1-e^{-2\pi ir}}\mathrm{d}r\\
& =-%
{\textstyle\sum_{l=1}^{\infty}}
{\textstyle\sum_{k_{1}+\cdots+k_{N}=l}}
\binom{l-1}{k_{1},\ldots,k_{N}}\varepsilon_{1}^{k_{1}}\cdots\varepsilon
_{N}^{k_{N}}%
{\textstyle\sum_{j=1}^{\infty}}
\frac{e^{(a+iK)(k_{1}\log\lambda_{1}+\cdots+k_{N}\log\lambda_{N}-w+2\pi ij)}%
}{k_{1}\log\lambda_{1}+\cdots+k_{N}\log\lambda_{N}-w+2\pi ij}%
\end{align*}
as a uniformly convergent series in $w\in A$. The series itself converges
uniformly in any compactum in $\mathbf{C}$ which avoids the support of
$\mathcal{P}^{\operatorname*{per},\pm}$. This corresponds to the $V^{+}$-case
of Prop. \ref{prop_Vplusminuscases}; the analogous statements in the $V^{-}%
$-case also holds.
\end{corollary}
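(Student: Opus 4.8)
The plan is to derive Corollary \ref{cor_Vplusminuscases} from Proposition \ref{prop_Vplusminuscases} by taking the limit $b\to+\infty$ along integers, and to justify that the boundary-term contribution at $r=b\pm iK$ vanishes while the contribution at $r=a\pm iK$ survives. Concretely, the closed-form expression $\tilde V^{\pm}(w)$ in Proposition \ref{prop_Vplusminuscases} is a difference of two pieces, one evaluated at $r=b\pm iK$ and one at $r=a\pm iK$. So the first and main step is to show that, for $w$ in any fixed compactum $A$ in the open right half-plane, the $b$-piece tends to $0$ as $b\to+\infty$, uniformly in $w\in A$. For this I would reuse the exponential-decay estimate already established at the end of the proof of Proposition \ref{prop_Vplusminuscases}: formula \eqref{lmips1} with $x=\operatorname{Re}r=b$, $y=\pm K$ shows each numerator is bounded by $(e^{-2\pi K})^{j}\cdot e^{-b\operatorname{Re}w}\cdot e^{\pm K\operatorname{Im}w}\cdot\prod_n\lvert e^{r\log\lambda_n}\rvert^{k_n}$, and since $\operatorname{Re}w\geq c>0$ on $A$, the factor $e^{-b\operatorname{Re}w}\leq e^{-bc}\to0$ provides uniform decay; the remaining sum over $j$ and $k_1,\dots,k_N$ is dominated by the same convergent geometric series as before, with a denominator bounded uniformly away from $0$ on $A$ (as $A$ avoids $\operatorname{supp}\mathcal P^{\operatorname{per},\pm}$ is not needed here — on the right half-plane the denominators are automatically bounded below). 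Hence the whole $b$-term goes to zero uniformly on $A$.

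The second step is to identify the surviving $a$-term with the right-hand side claimed in the corollary, being careful about the overall sign: Proposition \ref{prop_Vplusminuscases} gives $\tilde V^{+}(w)=+\sum\cdots\big[\,\big]_{r=a+iK}^{r=b+iK}$, so after passing to the limit the $b$-endpoint drops out and only $-(\text{value at }r=a+iK)$ remains, matching the displayed formula with its leading minus sign. The convergence of the integral $\int_{a+iK}^{\infty+iK}$ itself is part of the package: its integrand decays exponentially in $\operatorname{Re}r$ by Remark \ref{rmk_estimate_integrand} (the $\log(1-\cdots)$ factor is bounded by $C_0$ on the box, and $\lvert e^{-wr}\rvert$ decays exponentially since $\operatorname{Re}w>0$, while $\lvert 1-e^{-2\pi ir}\rvert$ is bounded below on the line $\operatorname{Im}r=K$), so the improper integral converges and equals the limit of the finite integrals $\int_{a+iK}^{b+iK}$.

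The third step is the second assertion of the corollary: that the resulting series converges uniformly on any compactum in $\mathbf C$ avoiding $\operatorname{supp}\mathcal P^{\operatorname{per},\pm}$. But this is exactly the uniform-convergence statement already proved inside Proposition \ref{prop_Vplusminuscases} — the series written in the corollary is literally the $a$-endpoint half of the series $\tilde V^{\pm}$, and the domination argument there (exponential decay in $j$ from $(e^{-2\pi\lvert\operatorname{Im}r\rvert})^{j}$ with $\lvert\operatorname{Im}r\rvert=K>0$, exponential decay in each $k_n$ from $\lvert e^{r\log\lambda_n}\rvert<\tfrac12$, and a uniform positive lower bound on the denominators off the support of $\mathcal P^{\operatorname{per},\pm}$) applies verbatim. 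Finally, the $V^{-}$-case is handled by the symmetric computation, replacing $+iK$ by $-iK$, $+2\pi ij$ by $-2\pi ij$, and the overall sign per Proposition \ref{prop_Vplusminuscases}. I expect the only genuinely delicate point to be the interchange of the $b\to\infty$ limit with the double summation over $(l;k_1,\dots,k_N)$ and $j$ in $\tilde V^{\pm}$ — but this is legitimized by the uniform domination just recalled (dominated convergence for series), so there is no real obstacle, and the corollary follows.
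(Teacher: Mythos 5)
Your proposal is correct and follows essentially the same route as the paper: both take the limit $b\to+\infty$ in Proposition \ref{prop_Vplusminuscases}, use the estimate of Equation \ref{lmips1} together with $\operatorname{Re}w>\epsilon$ on a compactum in the right half-plane to show the $b$-endpoint contribution vanishes uniformly (leaving the $a$-endpoint term with the stated minus sign), and reuse the same domination to get uniform convergence of the series on compacta avoiding $\operatorname*{supp}\mathcal{P}^{\operatorname*{per},\pm}$. Your extra remarks (convergence of the improper integral, automatic lower bound on denominators in the right half-plane) are correct refinements of the same argument.
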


\begin{proof}
We continue with the same notation as in the proof of Prop.
\ref{prop_Vplusminuscases}: To show convergence: Let $A\subset\mathbf{C}$ be a
compactum avoiding the support of $\mathcal{P}^{\operatorname*{per},\pm}$. Now
Equation \ref{lmips1} suffices to see uniform convergence. To show agreement
with the integral: This time $A\subset\mathbf{C}$ is a compactum in the
\textit{right half-plane}. Hence, there exists some $\epsilon>0$ such that
$\operatorname{Re}w>\epsilon$ holds for all $w\in A$. Now we take the limit
$b\rightarrow+\infty$. As $\operatorname{Re}w>\epsilon$, the term
$e^{-x\cdot\operatorname{Re}w}\cdot e^{y\cdot\operatorname{Im}w}$ in Equation
\ref{lmips1} therefore gives an exponential decay also in $b\rightarrow
+\infty$; irrespective of the imaginary part as $\left\vert y\right\vert \leq
K$ and $\left\vert \operatorname{Im}w\right\vert $ is also bounded since $w\in
A$, which is compact. Thus, the other factors stay bounded.
\end{proof}

\begin{remark}
If we drop the condition that $A$ has to lie in the right half-plane, the last
claim will indeed fail.
\end{remark}

These proofs have handled two of the integrals which appear in Equation
\ref{l_ModifiedAbelPlana}. For the remaining integrals, the computation can be
carried out in virtually the same way. We leave the details to the reader and
only list the results:

\begin{proposition}
\label{cor_RealAxisTerm}In every compactum $A\subset\mathbf{C}$ inside the
open right half-plane, we have%
\begin{align*}
& \int_{a}^{\infty}\log\left(  1-\sum_{i=1}^{N}\varepsilon_{i}\lambda_{i}%
^{r}\right)  \cdot e^{-wr}\mathrm{d}r\\
& =\sum_{l=1}^{\infty}\sum_{k_{1}+\cdots+k_{N}=l}\binom{l-1}{k_{1}%
,\ldots,k_{N}}\varepsilon_{1}^{k_{1}}\cdots\varepsilon_{N}^{k_{N}}%
\frac{e^{a\left(  k_{1}\log\lambda_{1}+\cdots+k_{N}\log\lambda_{N}-w\right)
}}{k_{1}\log\lambda_{1}+\cdots+k_{N}\log\lambda_{N}-w}\text{.}%
\end{align*}
The series is uniformly convergent in any compactum $A\subset\mathbf{C}$
avoiding the support of $\mathcal{P}$ (i.e. not necessarily contained in the
open right half-plane).
\end{proposition}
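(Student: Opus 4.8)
The plan is to follow the proof of Proposition~\ref{prop_Vplusminuscases} with one index suppressed: here there is no factor $\frac{1}{1-e^{-2\pi ir}}$ to unfold, so no geometric series in an auxiliary variable enters and the exponents and denominators carry no $2\pi ij$. Fix $w$ with $\operatorname{Re}w>0$; by Remark~\ref{rmk_estimate_integrand} the integrand decays exponentially along $[a,+\infty)$, so the integral exists and, in fact, converges uniformly for $w$ in any compactum of the open right half-plane. Since $a\geq r_{0}$ and the path of integration lies on the real axis, Equation~\ref{la1} keeps $\sum_{i}\varepsilon_{i}\lambda_{i}^{r}$ inside the open unit disc for all $r\in[a,+\infty)$, so I substitute $\log(1-z)=-\sum_{l\geq1}l^{-1}z^{l}$ with $z=\sum_{i}\varepsilon_{i}\lambda_{i}^{r}$ and expand each $z^{l}$ by the multinomial theorem, the coefficient $\frac{1}{l}\binom{l}{k_{1},\ldots,k_{N}}$ being written $\binom{l-1}{k_{1},\ldots,k_{N}}$ as in Equation~\ref{leq_Def_P}. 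For $r_{0}$ taken sufficiently large, this triple series converges absolutely and uniformly on $[a,+\infty)$, and together with the exponential decay of $e^{-wr}$ this justifies interchanging summation and integration (equivalently, integrate over $[a,b]$ and let $b\to+\infty$, every term at $b$ dying because the exponents below have negative real part).

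Each summand then reduces to $\int_{a}^{\infty}e^{r(k_{1}\log\lambda_{1}+\cdots+k_{N}\log\lambda_{N}-w)}\,\mathrm{d}r$, whose exponent has real part $\sum_{n}k_{n}\log|\lambda_{n}|-\operatorname{Re}w<0$ --- this is the one place where $|\lambda_{n}|<1$ is used. Hence the antiderivative vanishes at $+\infty$ and the integral equals $-\frac{e^{a(k_{1}\log\lambda_{1}+\cdots+k_{N}\log\lambda_{N}-w)}}{k_{1}\log\lambda_{1}+\cdots+k_{N}\log\lambda_{N}-w}$, the denominator being nonzero by the same observation. Reinserting this, the minus sign cancels the minus in $\log(1-z)=-\sum l^{-1}z^{l}$ and the asserted closed form falls out.

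The substance is the uniform convergence, which runs exactly as in Proposition~\ref{prop_Vplusminuscases} and Corollary~\ref{cor_Vplusminuscases}. Let $A\subset\mathbf{C}$ be a compactum with $A\cap\operatorname*{supp}\mathcal{P}=\varnothing$; since $\operatorname*{supp}\mathcal{P}$ is closed, $\delta:=d(A,\operatorname*{supp}\mathcal{P})>0$. Set $z_{k}:=k_{1}\log\lambda_{1}+\cdots+k_{N}\log\lambda_{N}$. Grouping summands sharing a common value $z_{k}$, only the points carrying nonzero multiplicity of $\mathcal{P}$ remain, each at distance $\geq\delta$ from $A$, so every denominator $z_{k}-w$ that occurs satisfies $|z_{k}-w|\geq\delta$ uniformly in $w\in A$. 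For the numerators, $|e^{az_{k}-aw}|=e^{-a\operatorname{Re}w}\prod_{n}|\lambda_{n}|^{ak_{n}}$ with $e^{-a\operatorname{Re}w}$ bounded on $A$, whence
\[
\sum_{l\geq1}\sum_{k_{1}+\cdots+k_{N}=l}\binom{l-1}{k_{1},\ldots,k_{N}}|\varepsilon_{1}|^{k_{1}}\cdots|\varepsilon_{N}|^{k_{N}}\prod_{n}|\lambda_{n}|^{ak_{n}}=-\log\left(1-\sum_{n}|\varepsilon_{n}||\lambda_{n}|^{a}\right)<\infty
\]
for $r_{0}$ sufficiently large; a Weierstrass M-test then gives uniform (and absolute) convergence on $A$. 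The holomorphic function so obtained agrees with the integral on $A\cap\{\operatorname{Re}w>0\}$ by the previous steps, hence is the claimed continuation. The main obstacle --- and the only place the hypothesis ``$A$ avoids $\operatorname*{supp}\mathcal{P}$'' is genuinely needed --- is precisely this: the second convergence is governed by the pseudo-divisor $\mathcal{P}$ rather than by $\operatorname{Re}w$, so it persists on compacta far outside the right half-plane, whereas without separation from $\operatorname*{supp}\mathcal{P}$ the lower bound on the denominators collapses and the argument fails.
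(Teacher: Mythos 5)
Your proof is correct and follows exactly the route the paper intends: it expands the logarithm via the multinomial series, integrates termwise over $[a,b]$ and lets $b\to+\infty$ using $\operatorname{Re}w>0$, and then proves uniform convergence off $\operatorname*{supp}\mathcal{P}$ by bounding denominators below and numerators by the absolutely convergent majorant, just as in Proposition \ref{prop_Vplusminuscases} and Corollary \ref{cor_Vplusminuscases}, which is precisely the argument the paper leaves to the reader. Your explicit grouping of terms sharing the same value $k_{1}\log\lambda_{1}+\cdots+k_{N}\log\lambda_{N}$ is, if anything, a touch more careful than the paper's sketch on the cancellation point.
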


\begin{proposition}
For $0<\epsilon<K$, the integrals%
\[
H_{u}^{\pm}(w):=\pm i\int_{\epsilon}^{K}\frac{h(u\pm iy)}{e^{2\pi y}%
-1}\mathrm{d}y
\]
exist and define holomorphic functions in the open right half-plane. Then
$\tilde{H}_{u}^{\pm}(w):=$
\end{proposition}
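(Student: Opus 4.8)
The plan is to run, once more, the routine of Propositions~\ref{prop_Vplusminuscases} and~\ref{cor_RealAxisTerm}: expand the outer logarithm into its Taylor series, expand $(e^{2\pi y}-1)^{-1}$ into a geometric series, integrate term by term, and read off the answer. Existence and holomorphy require no argument: since $\epsilon>0$ the weight $(e^{2\pi y}-1)^{-1}$ is continuous and bounded on $[\epsilon,K]$, while $h(u\pm iy)=\log\bigl(1-\sum_{i}\varepsilon_i\lambda_i^{u\pm iy}\bigr)e^{-w(u\pm iy)}$ is jointly continuous in $(y,w)$ and entire in $w$ with bounds locally uniform in $w$; hence $H_u^{\pm}(w)$ is in fact an entire function of $w$, a fortiori holomorphic on the open right half-plane. (Note that every point of $\operatorname*{supp}\mathcal{P}^{\operatorname*{per},\pm}$ is of the shape $k_1\log\lambda_1+\cdots+k_N\log\lambda_N\pm2\pi ij$ with $l=k_1+\cdots+k_N\ge1$, hence has negative real part because $|\lambda_n|<1$; so the right half-plane is automatically disjoint from that support.)

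To produce the series $\tilde H_u^{\pm}$, observe that on $y\in[\epsilon,K]$ one has $\operatorname{Re}(u\pm iy)=u\ge r_0$ and $\lvert\operatorname{Im}(u\pm iy)\rvert\le K$, so by~\ref{la1} the point $\sum_i\varepsilon_i\lambda_i^{u\pm iy}$ lies inside the disc of convergence of $\log(1-z)=-\sum_{l\ge1}z^l/l$; expanding and applying the multinomial theorem gives
\begin{align*}
h(u\pm iy)&=-\sum_{l=1}^{\infty}\sum_{k_1+\cdots+k_N=l}\binom{l-1}{k_1,\ldots,k_N}\varepsilon_1^{k_1}\cdots\varepsilon_N^{k_N}\\
&\qquad\qquad e^{(u\pm iy)(k_1\log\lambda_1+\cdots+k_N\log\lambda_N-w)}.
\end{align*}
Since $y\ge\epsilon>0$ we also have $(e^{2\pi y}-1)^{-1}=\sum_{j\ge1}e^{-2\pi jy}$. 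Substituting both into $H_u^{\pm}(w)=\pm i\int_{\epsilon}^{K}h(u\pm iy)\,(e^{2\pi y}-1)^{-1}\,\mathrm{d}y$ and exchanging the triple sum with the integral, each surviving integral is the elementary $\int_{\epsilon}^{K}e^{(u\pm iy)c-2\pi jy}\,\mathrm{d}y$ with $c:=k_1\log\lambda_1+\cdots+k_N\log\lambda_N-w$. Using the identity $\pm ic-2\pi j=\pm i(c\pm2\pi ij)$ and --- crucially --- that $u$ is an integer (so $e^{\mp2\pi iju}=1$), this integral equals a difference of two exponentials over $c\pm2\pi ij$, and one reads off
\begin{align*}
\tilde H_u^{\pm}(w)&=-\sum_{l=1}^{\infty}\sum_{k_1+\cdots+k_N=l}\binom{l-1}{k_1,\ldots,k_N}\varepsilon_1^{k_1}\cdots\varepsilon_N^{k_N}\\
&\qquad\qquad\sum_{j=1}^{\infty}\left.\frac{e^{r(k_1\log\lambda_1+\cdots+k_N\log\lambda_N-w\pm2\pi ij)}}{k_1\log\lambda_1+\cdots+k_N\log\lambda_N-w\pm2\pi ij}\right|_{r=u\pm i\epsilon}^{r=u\pm iK},
\end{align*}
the exact analogue of the formula in Proposition~\ref{prop_Vplusminuscases}, except that both endpoints $r=u\pm i\epsilon$ and $r=u\pm iK$ of the \emph{bounded} path $[\epsilon,K]$ occur.

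For the convergence claims I would recycle the estimate underlying~\ref{lmips1} verbatim. On a compactum $A\subset\mathbf{C}$ disjoint from $\operatorname*{supp}\mathcal{P}^{\operatorname*{per},\pm}$, the denominators $k_1\log\lambda_1+\cdots+k_N\log\lambda_N-w\pm2\pi ij$ have a uniform positive lower bound, while the numerators are dominated by a product of convergent geometric series: a factor $(e^{-2\pi K})^{j}$ resp.\ $(e^{-2\pi\epsilon})^{j}$ in $j$, and factors $\lvert e^{r\log\lambda_n}\rvert^{k_n}<2^{-k_n}$ in the $k_n$ (from~\ref{la0}, $r_0$ having been chosen large enough), exactly as in Propositions~\ref{prop_Vplusminuscases} and~\ref{cor_RealAxisTerm}. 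This simultaneously legitimises the term-by-term integration above and shows $\tilde H_u^{\pm}$ to be holomorphic on $A$; taking $A$ to run through all compacta in the right half-plane gives $\tilde H_u^{\pm}=H_u^{\pm}$ there, hence, by the identity theorem and the entirety of $H_u^{\pm}$, wherever the series converges. Finally, because the integration path is bounded, both endpoints survive and the $j$-th summand has the form $\bigl(e^{(u\pm iK)d}-e^{(u\pm i\epsilon)d}\bigr)/d$ with $d=k_1\log\lambda_1+\cdots+k_N\log\lambda_N-w\pm2\pi ij$; its numerator vanishes at $d=0$, so every apparent pole of $\tilde H_u^{\pm}$ on $\operatorname*{supp}\mathcal{P}^{\operatorname*{per},\pm}$ is removable --- reconfirming that $H_u^{\pm}$ is entire, in contrast with the genuine simple poles produced in Corollary~\ref{cor_Vplusminuscases} after letting $b\to\infty$. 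The only steps needing any care are the legitimacy of the triple interchange --- which rests solely on the smallness of $\lvert e^{r\log\lambda_n}\rvert$ secured by the choice of $r_0$, just as in the earlier proofs --- and keeping the signs and the $2\pi ij$-shifts straight; there is no essential obstacle.
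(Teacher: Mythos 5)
Your argument is correct and is exactly the computation the paper intends: the paper explicitly leaves these integrals to the reader as being done "in virtually the same way" as Proposition \ref{prop_Vplusminuscases}, and you carry out precisely that routine (expand the logarithm inside the box where Equation \ref{la1} applies, expand $(e^{2\pi y}-1)^{-1}$ geometrically for $y\geq\epsilon>0$, integrate termwise, and control everything with the estimate of Equation \ref{lmips1}), with the added correct observations that $H_u^{\pm}$ is entire because the path is compact and that the right half-plane is automatically disjoint from $\operatorname*{supp}\mathcal{P}^{\operatorname*{per},\pm}$. One remark: your closed form (overall factor $-1$, denominator $k_1\log\lambda_1+\cdots+k_N\log\lambda_N-w\pm2\pi ij$, endpoints $r=u\pm i\epsilon$, $r=u\pm iK$, the rewriting being legitimate only because $u$ is an integer, as you flag) does not literally match the paper's displayed $\tilde H_u^{\pm}$, but the paper's display appears to contain typos (a missing $\sum_{j\geq1}$ and the sign $\mp2\pi ij$ in the denominator), and your version is the one consistent with the endpoint structure of $\tilde V^{\pm}$ in Proposition \ref{prop_Vplusminuscases} and with the limit carried out in the proof of Theorem \ref{thm_BaseAnalyticContinuation}.
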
%

\begin{align*}
& \pm e^{-wu}\sum_{l=1}^{\infty}\sum_{k_{1}+\cdots+k_{N}=l}\binom{l-1}%
{k_{1},\ldots,k_{N}}\varepsilon_{1}^{k_{1}}\cdots\varepsilon_{N}^{k_{N}%
}\lambda_{1}^{k_{1}u}\cdots\lambda_{N}^{k_{N}u}\\
& \qquad\qquad i\left.  \frac{e^{(\pm i(k_{1}\log\lambda_{1}+\cdots+k_{N}%
\log\lambda_{N}-w)-2\pi j)y}}{k_{1}\log\lambda_{1}+\cdots+k_{N}\log\lambda
_{N}-w\mp2\pi ij}\right\vert _{y=\epsilon}^{y=K}\text{.}%
\end{align*}

\begin{corollary}
Pick $u\geq r_{0}$ an integer. In every compactum $A\subset\mathbf{C}$ inside
the open right half-plane, we have%
\begin{align*}
& \pm i\int_{0}^{K}\frac{h(u\pm iy)}{e^{2\pi y}-1}\mathrm{d}y=\pm e^{-wu}%
{\textstyle\sum_{l=1}^{\infty}}
{\textstyle\sum_{k_{1}+\cdots+k_{N}=l}}
\binom{l-1}{k_{1},\ldots,k_{N}}\varepsilon_{1}^{k_{1}}\cdots\varepsilon
_{N}^{k_{N}}\lambda_{1}^{k_{1}u}\cdots\lambda_{N}^{k_{N}u}\\
& \qquad\qquad\qquad i\left.  \frac{e^{(\pm i(k_{1}\log\lambda_{1}%
+\cdots+k_{N}\log\lambda_{N}-w)-2\pi j)y}}{k_{1}\log\lambda_{1}+\cdots
+k_{N}\log\lambda_{N}-w\mp2\pi ij}\right\vert _{y=0}^{y=K}\text{.}%
\end{align*}

\end{corollary}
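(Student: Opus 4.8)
The plan is to obtain this identity as the boundary case $\epsilon\downarrow 0$ of the Proposition stated immediately above. Fix a compactum $A\subset\mathbf{C}$ inside the open right half-plane. For each $\epsilon$ with $0<\epsilon<K$ that Proposition provides an exact formula for $\pm i\int_{\epsilon}^{K}h(u\pm iy)(e^{2\pi y}-1)^{-1}\,\mathrm{d}y$ as a series in the indices $l,k_{1},\ldots,k_{N},j$, and the entire dependence of the right-hand side on $\epsilon$ sits in the lower endpoint $y=\epsilon$ of the bracket $\left.(\,\cdots)\right\vert_{y=\epsilon}^{y=K}$. The task is then to let $\epsilon\downarrow 0$ and verify that this limit may be pulled through the integral on the left and through the summation on the right.

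On the right-hand side, for each fixed multi-index the summand is continuous in $\epsilon\in[0,K]$, and letting $\epsilon\downarrow 0$ simply replaces $\left.(\,\cdots)\right\vert_{y=\epsilon}^{y=K}$ by $\left.(\,\cdots)\right\vert_{y=0}^{y=K}$ termwise. To interchange $\lim_{\epsilon\downarrow 0}$ with the summation I would produce an $\epsilon$-independent majorant, exactly as in the proof of Proposition \ref{prop_Vplusminuscases} and the estimate in \ref{lmips1}: by \ref{la0} one has $\left\vert e^{u\log\lambda_{n}}\right\vert<\tfrac12$, so the sum over $k_{1}+\cdots+k_{N}=l$ (with its factors $\varepsilon_{n}^{k_{n}}\lambda_{n}^{k_{n}u}$) and then the sum over $l$ are dominated by convergent geometric series; on the path $y\in[0,K]$ one has $\left\vert e^{(\pm i(\sum_{n}k_{n}\log\lambda_{n}-w)-2\pi j)y}\right\vert\leq 1$, and the denominators $\sum_{n}k_{n}\log\lambda_{n}-w\mp 2\pi ij$ stay bounded away from $0$ for $w\in A$, since $\operatorname{Re}w>0$ while $\operatorname{Re}\left(\sum_{n}k_{n}\log\lambda_{n}\right)<0$ (because $\left\vert\lambda_{n}\right\vert<1$), so that their real parts are uniformly negative on $A$.

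On the left-hand side the factor $(e^{2\pi y}-1)^{-1}$ has a simple pole at $y=0$, whereas $h(u\pm iy)\vert_{y=0}=h(u)$ need not vanish; accordingly the symbol $\int_{0}^{K}$ has to be read as the improper integral $\lim_{\epsilon\downarrow 0}\int_{\epsilon}^{K}$. This limit is finite precisely when the contributions of the two signs are kept together: then the numerator is $h(u+iy)-h(u-iy)=O(y)$ and cancels the pole — which is exactly the combination in which these two terms enter the modified Abel--Plana identity \ref{l_ModifiedAbelPlana}. On the series side the same pairing is what is really needed: when $\epsilon>0$, the factor $(e^{-2\pi\epsilon})^{j}$ from the lower endpoint gives geometric decay in $j$ (this is precisely why the preceding Proposition required $\epsilon>0$), but at $\epsilon=0$ that decay is lost, and instead one uses $\tfrac{1}{iA-2\pi j}+\tfrac{1}{iA+2\pi j}=O(j^{-2})$ with $A:=\sum_{n}k_{n}\log\lambda_{n}-w$, so that the $+$ and $-$ series converge absolutely once added; this is the cancellation already at work implicitly in Corollary \ref{cor_Vplusminuscases}. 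With $\int_{0}^{K}$ understood this way, the two passages to the limit give the asserted identity, and the stated uniform convergence of the series is inherited verbatim from Proposition \ref{prop_Vplusminuscases}.

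The single genuine obstacle is thus the endpoint $y=0$: termwise all quantities tend to the right limit, but neither the improper integral nor the $j$-series is absolutely convergent there in isolation, so one must invoke the $+/-$ cancellation to legitimise the interchange of limit and summation. Everything else — expanding $h(u\pm iy)$ via $\log(1-z)=-\sum_{l\geq 1}l^{-1}z^{l}$, expanding $(e^{2\pi y}-1)^{-1}=\sum_{j\geq 1}e^{-2\pi jy}$, integrating the resulting exponentials in $y$, and bounding the tails — is the routine bookkeeping already carried out for $V^{\pm}$, $\tilde{V}^{\pm}$ and the real-axis term in Propositions \ref{prop_Vplusminuscases} and \ref{cor_RealAxisTerm}.
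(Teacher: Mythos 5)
Your proposal is correct and is essentially the argument the paper intends: the paper gives no proof here (it declares the remaining integrals to be ``virtually the same'' computation as for $V^{\pm}$ and leaves the details to the reader), and the natural reading is exactly yours, namely take the $\epsilon>0$ proposition and let $\epsilon\downarrow0$, using the uniform estimates as in Equation \ref{lmips1} to interchange limit and summation. Your observation about the endpoint $y=0$ — that the single-sign improper integral and the $j$-series are individually divergent there, and that the identity is only meaningful in the paired $+/-$ combination in which these terms actually enter Equation \ref{l_ModifiedAbelPlana}, where the pairing gives $O(j^{-2})$ decay — is a correct and genuinely needed clarification of the literal statement. One small repair: the inequality $\left\vert e^{(\pm i(k_{1}\log\lambda_{1}+\cdots+k_{N}\log\lambda_{N}-w)-2\pi j)y}\right\vert \leq 1$ is not true in general, since its modulus contains the factor $e^{\mp y(\sum_{n}k_{n}\arg\lambda_{n}-\operatorname{Im}w)}$ which can grow with $l$; the correct majorant groups $\lambda_{n}^{k_{n}u}$ with $e^{\pm ik_{n}y\log\lambda_{n}}$ into $e^{k_{n}(u\pm iy)\log\lambda_{n}}$ and applies Equation \ref{la0} on the whole box $[r_{0},\infty)\times i[-K,K]$, which is precisely the estimate of Equation \ref{lmips1} that you cite, so the bound you need is available — just not via that intermediate inequality.
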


\begin{proof}
[Proof of Theorem \ref{thm_BaseAnalyticContinuation}]We use Equation
\ref{l_ModifiedAbelPlana} and let $b\rightarrow+\infty$. \textit{After} this
limit has been carried out, we invoke Remark \ref{rmk_DecreaseK}: We may run
the same computation for any $K^{\prime}$ such that $0<K^{\prime}<K$ without
having to change $r_{0}$ and our assumptions will remain met. In particular,
we can let $K^{\prime}\rightarrow0$. By inspection, the resulting series then
converge to the statement of the theorem. This finishes the proof.
\end{proof}

\begin{example}
\label{example_AnalyticHelperForEllipticCurveInputData}We make the
continuation provided by Theorem \ref{thm_BaseAnalyticContinuation} explicit:
We pick $q:=2^{2}$. For $N:=3$, let $\lambda_{1}:=q^{-1/2},$ $\lambda
_{2}:=-q^{-1/2}$, $\lambda_{3}:=q^{-1}$, $\varepsilon_{1}:=\varepsilon_{2}:=1$
and $\varepsilon_{3}:=-1$. Below, on the left, we plot the original series
$\sum_{r=1}^{\infty}\log(1-\sum_{i=1}^{N}\varepsilon_{i}\lambda_{i}%
^{r})e^{-wr}$, and on the right we plot the analytic continuation:%
\[
{\includegraphics[
natheight=3.657700in,
natwidth=3.174300in,
height=1.8523in,
width=1.6102in
]%
{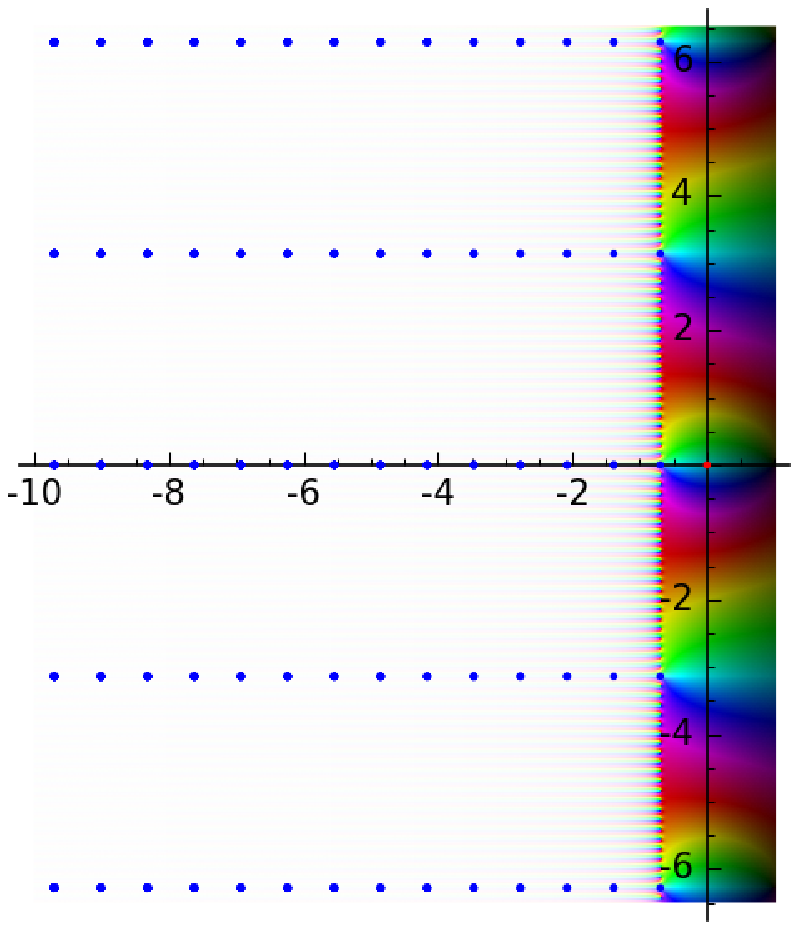}%
}
\qquad\qquad%
{\includegraphics[
natheight=3.657700in,
natwidth=3.174300in,
height=1.8614in,
width=1.6176in
]%
{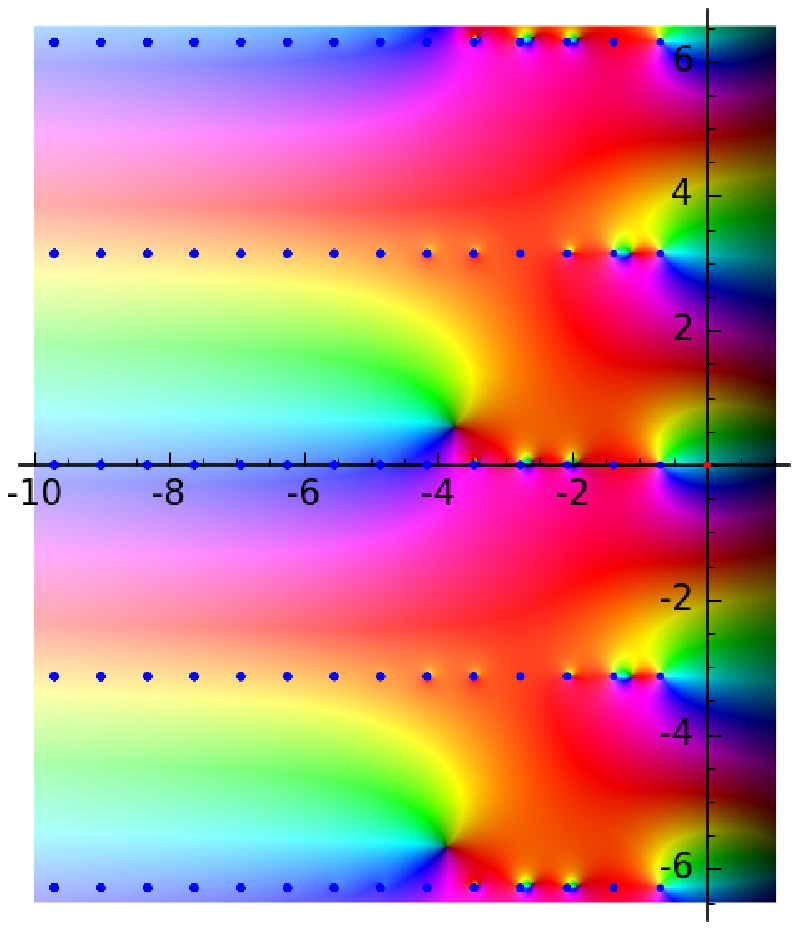}%
}
\]
The dots represent the support of the locally finite pseudo-divisor
$\mathcal{P}^{\operatorname*{per}}$. Secondly, pick $q:=11$. Consider the
polynomial $x^{2}-x+11$. It is irreducible over the rationals. If $\alpha
_{1},\alpha_{2}$ are its two solutions, we have $\left\vert \alpha
_{j}\right\vert =q^{1/2}$ for $j=1,2$, so these are Weil $q$-numbers of weight
one. Pick $N:=3$, $\lambda_{j}:=\alpha_{j}/q$ for $j=1,2$ and $\lambda
_{3}:=1/q$, $\varepsilon_{1}:=\varepsilon_{2}:=1$ and $\varepsilon_{3}:=-1$.%
\[%
{\includegraphics[
natheight=3.657700in,
natwidth=3.174300in,
height=1.8614in,
width=1.6176in
]%
{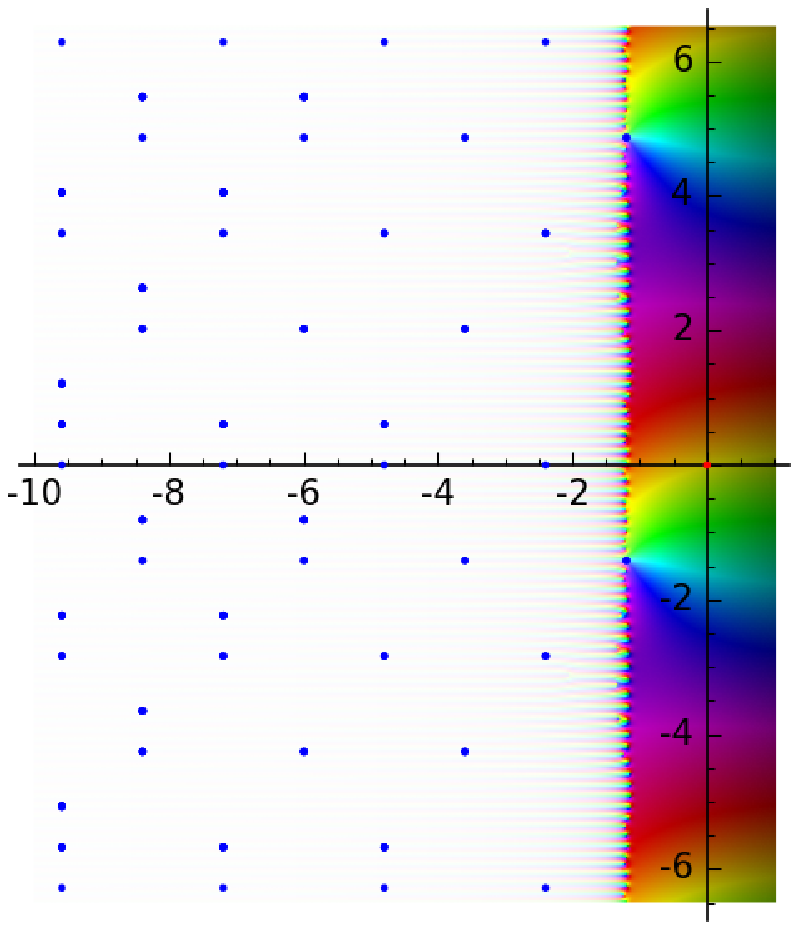}%
}
\qquad\qquad%
{\includegraphics[
natheight=3.657700in,
natwidth=3.174300in,
height=1.8614in,
width=1.6176in
]%
{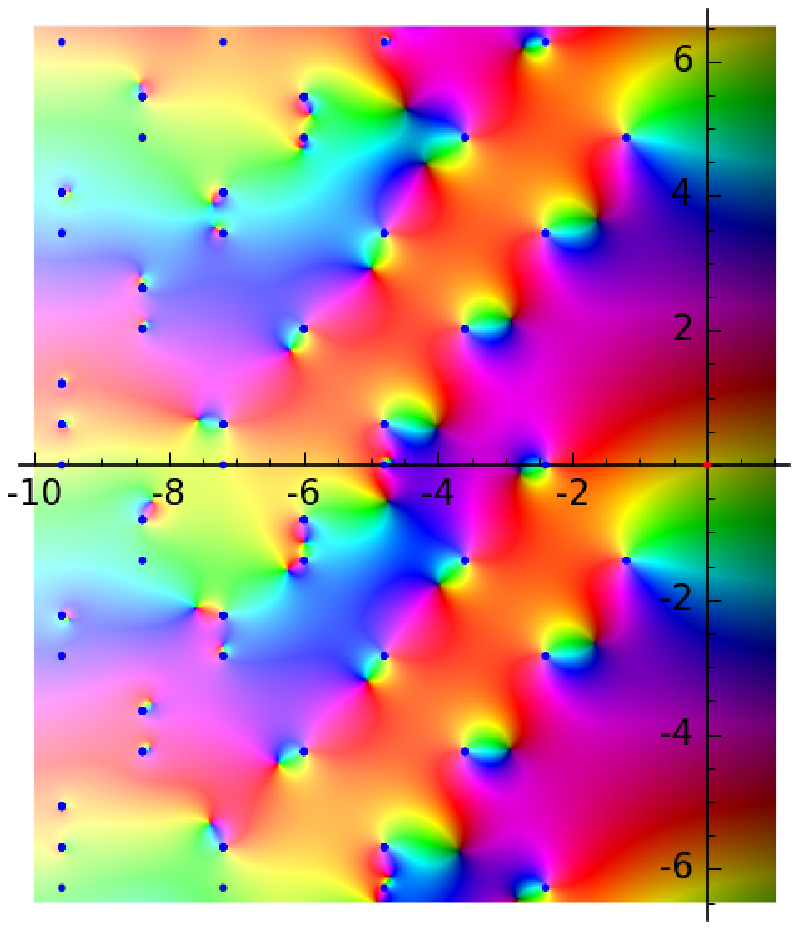}%
}
\]
The input data for this example was not picked at random. See Example
\ref{example_EllipticCurve}.
\end{example}

\begin{example}
The following figure sketches a pseudo-divisor $\mathcal{P}%
^{\operatorname*{per}}$ which fails to be locally finite:%
\[%
{\includegraphics[
height=1.0831in,
width=1.0435in
]%
{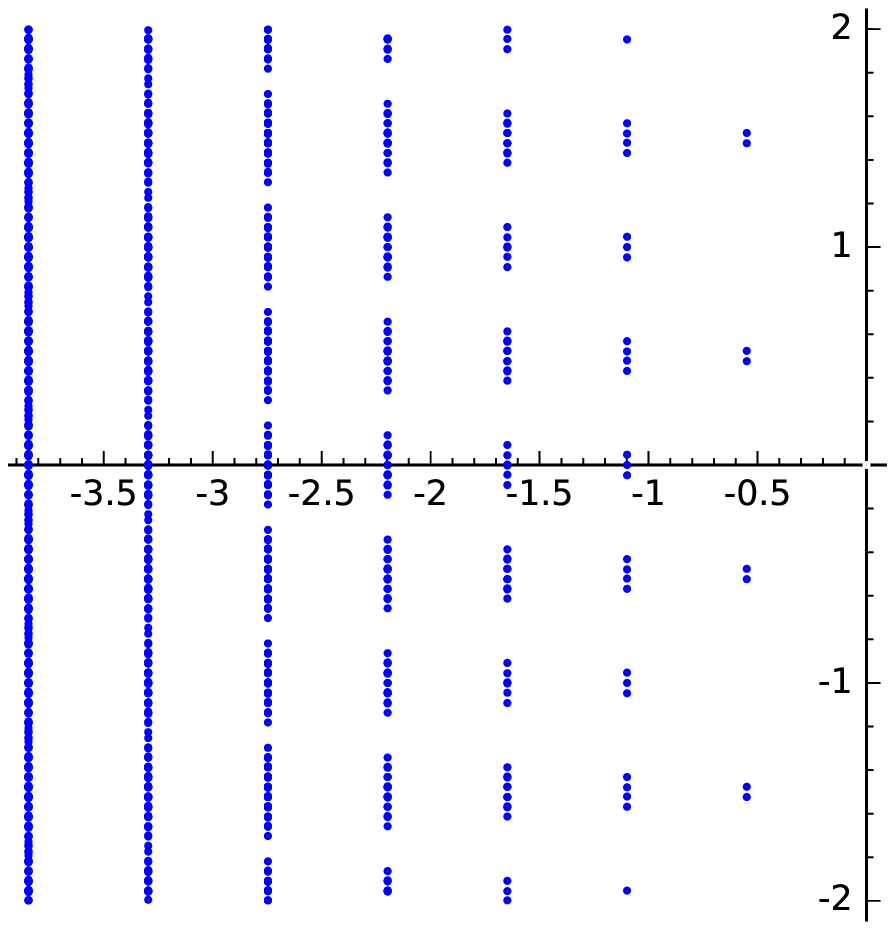}%
}
\]
In such a situation the above picture may represent the locus of poles for a
truncated series for some choice for $b$, but when we take the limit
$b\rightarrow+\infty$ as in the proof of Theorem
\ref{thm_BaseAnalyticContinuation}, these poles accumulate and we cannot hope
for $\left.  \text{\textbf{$\textsc{(AC)}$}}\right.  $ to hold.
\end{example}

\section{Step II}

Suppose we are given $(\underline{\varepsilon},\underline{\lambda
}):=(\varepsilon_{i},\lambda_{i})_{i=1,\ldots,N}$ and have picked $r_{0}$ as
explained in \S \ref{subsect_Setup}. We define a new pseudo-divisor%
\[
\mathcal{D}:=\Phi^{\ast}(\mathcal{P}^{\operatorname*{per}})\qquad
\text{for}\qquad\Phi(w):=e^{-w}\text{,}%
\]
and consider the power series%
\begin{equation}
J_{\underline{\varepsilon},\underline{\lambda},r_{0}}(z):=\sum_{r=r_{0}%
}^{\infty}\log\left(  1-\sum_{i=1}^{N}\varepsilon_{i}\lambda_{i}^{r}\right)
z^{r}\text{.}\label{lza6}%
\end{equation}
The estimate in Remark \ref{rmk_estimate_integrand} implies that this power
series has radius of convergence $\geq1$.

\begin{proposition}
\label{prop_Jtilde}Suppose we are given $(\varepsilon_{i},\lambda
_{i})_{i=1,\ldots,N}$ and have picked $r_{0}$ as explained in
\S \ref{subsect_Setup}. Then for every compactum $A\subset\mathbf{C}^{\times
}\setminus\operatorname*{supp}\mathcal{D}$ and choice of a branch of the
logarithm $\operatorname*{Log}_{A}:A\rightarrow\mathbf{C}$ which extends to a
holomorphic function in some neighbourhood of $A$, the series%
\begin{align*}
\tilde{J}_{\underline{\varepsilon},\underline{\lambda},r_{0}}(z)  & =\frac
{1}{2}\log\left(  1-\sum_{i=1}^{N}\varepsilon_{i}\lambda_{i}^{r_{0}}\right)
z^{r_{0}}\\
& +\sum_{l=1}^{\infty}\sum_{j\in\mathbf{Z}}\sum_{k_{1}+\cdots+k_{N}=l}%
\binom{l-1}{k_{1},\ldots,k_{N}}\varepsilon_{1}^{k_{1}}\cdots\varepsilon
_{N}^{k_{N}}\\
& \qquad\qquad\frac{e^{r_{0}\left(  k_{1}\log\lambda_{1}+\cdots+k_{N}%
\log\lambda_{N}+\operatorname*{Log}_{A}(z)+2\pi ij\right)  }}{k_{1}\log
\lambda_{1}+\cdots+k_{N}\log\lambda_{N}+\operatorname*{Log}_{A}(z)+2\pi
ij}\text{,}%
\end{align*}
is uniformly convergent in $A$. It defines a holomorphic function and is
independent of the choice of $\operatorname*{Log}_{A}$. If $\mathcal{D}$ is
locally finite, $\tilde{J}_{\underline{\varepsilon},\underline{\lambda},r_{0}%
}$ defines a meromorphic function on $\mathbf{C}$ whose poles all have order
one and the support of the divisor of poles agrees with $\operatorname*{supp}%
\mathcal{D}$. Inside the unit circle, $\tilde{J}_{\underline{\varepsilon
},\underline{\lambda},r_{0}}$ agrees with $J_{\underline{\varepsilon
},\underline{\lambda},r_{0}}$.
\end{proposition}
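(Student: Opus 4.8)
The plan is to obtain $\tilde{J}$ simply by substituting $w=-\operatorname*{Log}_{A}(z)$ — that is, by pulling back along the covering map $\Phi\colon w\mapsto e^{-w}$ — into Theorem \ref{thm_BaseAnalyticContinuation} and transporting all of its analytic information. The first thing I would record is the key simplification this substitution produces: the exponent $r_{0}(k_{1}\log\lambda_{1}+\cdots+k_{N}\log\lambda_{N}-w+2\pi ij)$ turns into $r_{0}(k_{1}\log\lambda_{1}+\cdots+k_{N}\log\lambda_{N}+\operatorname*{Log}_{A}(z)+2\pi ij)$, and since $r_{0}$ and $j$ are integers one has $e^{r_{0}\operatorname*{Log}_{A}(z)}=z^{r_{0}}$ and $e^{2\pi ir_{0}j}=1$ independently of the branch. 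Hence every numerator collapses to $\binom{l-1}{k_{1},\ldots,k_{N}}\varepsilon_{1}^{k_{1}}\cdots\varepsilon_{N}^{k_{N}}\lambda_{1}^{k_{1}r_{0}}\cdots\lambda_{N}^{k_{N}r_{0}}\,z^{r_{0}}$, an entire — in fact monomial — function of $z$ which is manifestly branch-independent; only the denominators $k_{1}\log\lambda_{1}+\cdots+k_{N}\log\lambda_{N}+\operatorname*{Log}_{A}(z)+2\pi ij$ see $\operatorname*{Log}_{A}$.

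\textbf{Convergence, holomorphy, branch independence, agreement with $J$.} The branch $\operatorname*{Log}_{A}$ is holomorphic on a neighbourhood of the compactum $A$, so $-\operatorname*{Log}_{A}(A)$ is a compact subset of $\mathbf{C}$; and since $\mathcal{D}$ and $\mathcal{P}^{\operatorname*{per}}$ correspond under the covering $\Phi$, a point $w$ lies in $\operatorname*{supp}\mathcal{P}^{\operatorname*{per}}$ precisely when $e^{-w}\in\operatorname*{supp}\mathcal{D}$. As $A\cap\operatorname*{supp}\mathcal{D}=\varnothing$, the compactum $-\operatorname*{Log}_{A}(A)$ avoids $\operatorname*{supp}\mathcal{P}^{\operatorname*{per}}$, so part (2) of Theorem \ref{thm_BaseAnalyticContinuation} gives uniform convergence of the $w$-series there; composing with the continuous map $z\mapsto-\operatorname*{Log}_{A}(z)$ yields uniform convergence of the $\tilde{J}$-series on $A$, and each term being holomorphic in a neighbourhood of $A$ (nonvanishing denominators, $A$ missing $\operatorname*{supp}\mathcal{D}$), $\tilde{J}$ is holomorphic on $A$. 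Replacing $\operatorname*{Log}_{A}$ by $\operatorname*{Log}_{A}+2\pi im$ leaves the numerators fixed and merely reindexes $j\mapsto j+m$ in the (symmetrically summed, convergent) sum over $j\in\mathbf{Z}$, so $\tilde{J}$ is branch-independent; the locally defined pieces glue to one holomorphic function on $\mathbf{C}^{\times}\setminus\operatorname*{supp}\mathcal{D}$. Agreement with $J$ inside the unit disc is immediate: for $0<|z|<1$ the point $w=-\operatorname*{Log}(z)$ has $\operatorname{Re}w=-\log|z|>0$, so Theorem \ref{thm_BaseAnalyticContinuation} applies and its left-hand side equals $\sum_{r\ge r_{0}}\log(1-\sum_{i}\varepsilon_{i}\lambda_{i}^{r})(e^{-w})^{r}=\sum_{r\ge r_{0}}\log(1-\sum_{i}\varepsilon_{i}\lambda_{i}^{r})z^{r}=J(z)$; as $J$ is holomorphic at $0$, this also extends $\tilde{J}$ across the origin.

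\textbf{Meromorphy.} Assume $\mathcal{D}$ locally finite. Since $\operatorname*{supp}\mathcal{P}^{\operatorname*{per}}$ lies in the closed left half-plane (part (2) of Theorem \ref{thm_BaseAnalyticContinuation}), we get $\operatorname*{supp}\mathcal{D}\subset\{|z|\ge1\}$, so $\mathcal{D}$ is automatically locally finite near $0$ and hence on all of $\mathbf{C}$. Rather than work with the $\sum_{j\in\mathbf{Z}}$ form directly, I would fix any $K'>0$ and read off from Equation \ref{l_ModifiedAbelPlana}, after $b\to+\infty$, that the continued $w$-series $g(w)$ equals the entire term $\tfrac{1}{2}\log(1-\sum_{i}\varepsilon_{i}\lambda_{i}^{r_{0}})e^{-wr_{0}}$ plus the functions $\tilde{V}^{\pm}$ (Proposition \ref{prop_Vplusminuscases}, Corollary \ref{cor_Vplusminuscases}), the real-axis term (Proposition \ref{cor_RealAxisTerm}), and the $\tilde{H}^{\pm}$-terms of the subsequent proposition and corollary; by those results each is meromorphic on $\mathbf{C}$ with at worst simple poles, supported on $\operatorname*{supp}\mathcal{P}^{\operatorname*{per},+}$, $\operatorname*{supp}\mathcal{P}^{\operatorname*{per},-}$ or $\operatorname*{supp}\mathcal{P}$ respectively. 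A finite sum of such functions is meromorphic, only finitely many summands being singular at any one point, so every pole of $g$ has order $\le1$ and is supported on $\operatorname*{supp}\mathcal{P}^{\operatorname*{per},+}\cup\operatorname*{supp}\mathcal{P}\cup\operatorname*{supp}\mathcal{P}^{\operatorname*{per},-}=\operatorname*{supp}\mathcal{P}^{\operatorname*{per}}$. Finally $\tilde{J}(z)=g(-\operatorname*{Log}_{A}(z))$ is the composite of $g$ with the local biholomorphism given by a branch of $-\log$, whose derivative $-1/z$ never vanishes; hence $\tilde{J}$ is meromorphic on $\mathbf{C}$, its poles have order $\le1$, and they are supported on $\operatorname*{supp}\mathcal{D}$.

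\textbf{The main obstacle.} What requires genuine care is upgrading ``order $\le1$, support inside $\operatorname*{supp}\mathcal{D}$'' to ``order \emph{exactly} one, support \emph{equal to} $\operatorname*{supp}\mathcal{D}$'': one must rule out that the simple poles coming from $\tilde{V}^{\pm}$, $\tilde{H}^{\pm}$ and the real-axis term cancel at a shared point. This is a residue bookkeeping: the $(l,j,k)$-summand contributes residue $e^{-w_{0}}\binom{l-1}{k_{1},\ldots,k_{N}}\varepsilon_{1}^{k_{1}}\cdots\varepsilon_{N}^{k_{N}}$ at the pole $w_{0}$ it creates, and one checks that the sum of these over all summands singular at a given $w_{0}$ equals $e^{-w_{0}}$ times the net multiplicity of $\mathcal{P}^{\operatorname*{per}}$ at $w_{0}$, which by the definition of the support of a pseudo-divisor is nonzero exactly when $e^{-w_{0}}\in\operatorname*{supp}\mathcal{D}$. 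One also has to note that the tidy $\sum_{j\in\mathbf{Z}}$ form in the statement is merely the $K'\to0$ rewriting of the $K'$-dependent decomposition used above, and that this rewriting does not affect meromorphy since $g$ is independent of $K'$ to begin with.
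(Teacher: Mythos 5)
Your main line of argument is the paper's own: substitute $w=-\operatorname{Log}_{A}(z)$ into Theorem \ref{thm_BaseAnalyticContinuation}, observe that since $r_{0}$ and $j$ are integers the numerators collapse to branch-independent monomials in $z$, pull back the uniform convergence along the compact set $-\operatorname{Log}_{A}(A)$, which avoids $\operatorname{supp}\mathcal{P}^{\operatorname{per}}$, obtain branch independence from the shift $j\mapsto j+m$ (equivalently, the $2\pi i$-periodicity of the $w$-side function, which is how the paper phrases it), and identify the result with $J$ on $0<|z|<1$ because that disc corresponds to the open right half-plane. That part is correct and matches the paper. Your residue bookkeeping at the end, showing that the total residue at a candidate pole is $e^{-w_{0}}$ times the net multiplicity of $\mathcal{P}^{\operatorname{per}}$ at $w_{0}$, is more explicit than what the paper writes for the claim that the pole support is exactly $\operatorname{supp}\mathcal{D}$, and is a genuine improvement; you only need to add that near a point of $\operatorname{supp}\mathcal{D}$ the sum of the non-singular terms still converges uniformly (the uniform convergence you quote is only stated off the support), which follows from the same estimates as in Proposition \ref{prop_Vplusminuscases}.

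The one step that is not covered by your hypotheses is the meromorphy argument via the five-piece decomposition of Equation \ref{l_ModifiedAbelPlana}. To conclude that $\tilde{V}^{\pm}$ are meromorphic on all of $\mathbf{C}$ you invoke Proposition \ref{prop_Vplusminuscases} and Corollary \ref{cor_Vplusminuscases}, but those results assume that $\mathcal{P}^{\operatorname{per},+}$ resp.\ $\mathcal{P}^{\operatorname{per},-}$ is locally finite, whereas you are only given that $\mathcal{D}$, equivalently $\mathcal{P}^{\operatorname{per}}$, is locally finite. Since the support of a sum of pseudo-divisors can be strictly smaller than the union of the supports of its summands (this cancellation is exactly what the paper emphasizes in Example \ref{example_AllNAboveThreeCanBeLocallyFinite}), local finiteness of $\mathcal{P}^{\operatorname{per}}$ does not formally yield local finiteness of the one-sided periodifications, and your asserted identity $\operatorname{supp}\mathcal{P}^{\operatorname{per},+}\cup\operatorname{supp}\mathcal{P}\cup\operatorname{supp}\mathcal{P}^{\operatorname{per},-}=\operatorname{supp}\mathcal{P}^{\operatorname{per}}$ is in general only an inclusion \textquotedblleft$\supseteq$\textquotedblright. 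Two repairs are available. Either argue as the paper does, directly with the combined $\sum_{j\in\mathbf{Z}}$ series of Theorem \ref{thm_BaseAnalyticContinuation}, whose convergence statement refers only to $\operatorname{supp}\mathcal{P}^{\operatorname{per}}$, and analyse the isolated support points by splitting off the finitely many singular terms (your residue computation then finishes the exactness). Or note that $\operatorname{Re}(k_{1}\log\lambda_{1}+\cdots+k_{N}\log\lambda_{N})\leq -l\min_{i}(-\log|\lambda_{i}|)$ with $\min_{i}(-\log|\lambda_{i}|)>0$, so any bounded region meets only finitely many of the points $k_{1}\log\lambda_{1}+\cdots+k_{N}\log\lambda_{N}+2\pi ij$; this shows that $\mathcal{P}$ and both one-sided periodifications have locally finite support under the standing assumption $|\lambda_{i}|<1$, after which your decomposition argument goes through as written.
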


\begin{proof}
Let us write $T(w)$ for the function described by Theorem
\ref{thm_BaseAnalyticContinuation}. Let $z\in\mathbf{C}^{\times}$ be any point
outside the support of $\mathcal{D}$. Let $U$ be an open neighbourhood of $z$
and $\operatorname*{Log}_{U}:U\rightarrow\mathbf{C}$ a branch of the logarithm
which is holomorphic on all of $U$ (always possible after shrinking $U$). We
define a function $g_{U}:U\rightarrow\mathbf{C}$ by the formula $g_{U}%
(u):=T(-\operatorname*{Log}_{U}(u))$ so that, thanks to the definition of
$\mathcal{D}$, $g_{U}$ is holomorphic on $U$. If we do this for opens
$U_{1},U_{2}$ (which overlap), the value of $-\operatorname*{Log}_{U_{j}}$ is
independent of $j=1,2$ up to an integer multiple of $2\pi i$, but since $T$ is
$2\pi i$-periodic in $w$, we will have $g_{U_{1}}\mid_{U_{1}\cap U_{2}%
}=g_{U_{2}}\mid_{U_{1}\cap U_{2}}$. Thus, $g$ glues (without any monodromy!)
and as a result we get a uniquely determined function $g$, defined on all of
$\mathbf{C}^{\times}\setminus\operatorname*{supp}\mathcal{D}$. However, by
Theorem \ref{thm_BaseAnalyticContinuation} this function is locally given by
$\tilde{J}_{\underline{\varepsilon},\underline{\lambda},r_{0}}$ as in the
claim, and this also guarantees the uniform convergence. Since the exponential
function is everywhere locally a homeomorphism, $\mathcal{D}$ is locally
finite if and only if $\mathcal{P}^{\operatorname*{per}}$ is locally finite.
The meromorphy and the statement about the poles follow. We have $\tilde
{J}_{\underline{\varepsilon},\underline{\lambda},r_{0}}=J_{\underline
{\varepsilon},\underline{\lambda},r_{0}}$ inside the open unit disc, as under
$z=e^{-w}$ this corresponds to the open right half-plane.
\end{proof}

\begin{definition}
\label{def_I_U_AsAnIntegral}Let $U\subset\mathbf{C}$ be any simply connected
domain containing $0\in\mathbf{C}$ and not intersecting the support of
$\mathcal{D}$. Define%
\[
I_{\underline{\varepsilon},\underline{\lambda},r_{0}}^{U}(z):=\int_{\gamma
}\frac{\tilde{J}_{\underline{\varepsilon},\underline{\lambda},r_{0}}(w)}%
{w}\mathrm{d}w\text{,}%
\]
where $\gamma$ is any path inside of $U$ from $0\in\mathbf{C}$ to $z\in U$.
\end{definition}

As we demand that $U$ is simply connected, the integral is independent of the
choice of $\gamma$. The integrand $\tilde{J}_{\underline{\varepsilon
},\underline{\lambda},r_{0}}(w)/w$ can be regarded as holomorphic since
$\tilde{J}_{\underline{\varepsilon},\underline{\lambda},r_{0}}$ has a zero of
order $\geq1$ at the origin (by Prop. \ref{prop_Jtilde} it agrees with
$J_{\underline{\varepsilon},\underline{\lambda},r_{0}}$ inside the unit
circle, and by Equation \ref{lza6} and $r_{0}\geq1$, \S \ref{subsect_Setup},
the latter function has no constant coefficient). Thus, $I_{\underline
{\varepsilon},\underline{\lambda},r_{0}}^{U}$ is a holomorphic function on $U$.

\begin{lemma}
\label{lemma_IU}Let $U$ be as in Definition \ref{def_I_U_AsAnIntegral}. At
$z=0$, the function $I_{\underline{\varepsilon},\underline{\lambda},r_{0}}%
^{U}$ has the power series expansion%
\[
I_{\underline{\varepsilon},\underline{\lambda},r_{0}}^{U}(z)=\sum_{r=r_{0}%
}^{\infty}\log\left(  1-\sum_{i=1}^{N}\varepsilon_{i}\lambda_{i}^{r}\right)
\frac{z^{r}}{r}\text{,}%
\]
whose radius of convergence is $\geq1$. In particular, near $z=0$ it is
independent of the choice of $U$.
\end{lemma}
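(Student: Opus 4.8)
The plan is to compute the Taylor coefficients of $I_{\underline{\varepsilon},\underline{\lambda},r_{0}}^{U}$ at the origin by differentiating the integral representation from Definition \ref{def_I_U_AsAnIntegral}, and then to identify the resulting power series with the stated one via Proposition \ref{prop_Jtilde}. Concretely, near $z=0$ the path $\gamma$ may be taken to be the straight segment from $0$ to $z$, lying inside $U$ for $|z|$ small; then $I_{\underline{\varepsilon},\underline{\lambda},r_{0}}^{U}(z)=\int_{0}^{z}\frac{\tilde J_{\underline{\varepsilon},\underline{\lambda},r_{0}}(w)}{w}\,\mathrm{d}w$, and since the integrand is holomorphic on a neighbourhood of $0$ (the zero of $\tilde J$ at the origin, noted already after Definition \ref{def_I_U_AsAnIntegral}, cancels the pole of $1/w$), the fundamental theorem of calculus for holomorphic functions gives that $I_{\underline{\varepsilon},\underline{\lambda},r_{0}}^{U}$ is the antiderivative of $\tilde J_{\underline{\varepsilon},\underline{\lambda},r_{0}}(w)/w$ vanishing at $0$.

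Next I would use Proposition \ref{prop_Jtilde}, which tells us that inside the open unit disc $\tilde J_{\underline{\varepsilon},\underline{\lambda},r_{0}}$ agrees with $J_{\underline{\varepsilon},\underline{\lambda},r_{0}}(w)=\sum_{r\geq r_{0}}\log\!\left(1-\sum_{i}\varepsilon_{i}\lambda_{i}^{r}\right)w^{r}$, a power series of radius of convergence $\geq 1$ by the estimate in Remark \ref{rmk_estimate_integrand}. Dividing by $w$ (legitimate since every term has $r\geq r_{0}\geq 1$) yields $\tilde J_{\underline{\varepsilon},\underline{\lambda},r_{0}}(w)/w=\sum_{r\geq r_{0}}\log\!\left(1-\sum_{i}\varepsilon_{i}\lambda_{i}^{r}\right)w^{r-1}$ on the unit disc, again with radius of convergence $\geq 1$. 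Integrating this uniformly convergent (on compacta of the unit disc) power series term by term along the segment from $0$ to $z$ produces exactly $\sum_{r\geq r_{0}}\log\!\left(1-\sum_{i}\varepsilon_{i}\lambda_{i}^{r}\right)\frac{z^{r}}{r}$, which is the claimed expansion; its radius of convergence is the same as that of the integrand, hence $\geq 1$. The independence of $U$ near $0$ is then immediate: any two admissible $U$ contain a common small disc about $0$, on which both versions of $I^{U}$ are given by this same convergent power series.

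I do not expect a genuine obstacle here — this is essentially bookkeeping built on results already established. The one point requiring a little care is the justification that $\tilde J_{\underline{\varepsilon},\underline{\lambda},r_{0}}(w)/w$ extends holomorphically across $w=0$ and that term-by-term integration is valid; both follow from the fact (already recorded in the text, via Equation \ref{lza6} and $r_{0}\geq 1$) that $J_{\underline{\varepsilon},\underline{\lambda},r_{0}}$ has no constant term, together with the local uniform convergence of power series inside their disc of convergence. A second minor point is that the identity of $I^{U}$ with the antiderivative a priori holds only for $z$ in a small disc about $0$ where the segment $[0,z]$ stays in $U$; but that is all the lemma asserts ("at $z=0$", "near $z=0$"), so no extension argument is needed.
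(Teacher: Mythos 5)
Your argument is correct and follows the same route as the paper: use Proposition \ref{prop_Jtilde} to identify $\tilde{J}_{\underline{\varepsilon},\underline{\lambda},r_{0}}$ with $J_{\underline{\varepsilon},\underline{\lambda},r_{0}}$ near the origin, divide by $w$ using the absence of a constant term, and integrate the power series term by term, noting that this preserves the radius of convergence. Your extra remarks on the segment path and holomorphic antiderivative are just slightly more explicit bookkeeping than the paper's wording, not a different method.
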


\begin{proof}
By Prop. \ref{prop_Jtilde} in a neighbourhood of the origin, $\tilde
{J}_{\underline{\varepsilon},\underline{\lambda},r_{0}}(w)/w=J_{\underline
{\varepsilon},\underline{\lambda},r_{0}}(w)/w$, so by Equation \ref{lza6} we
have the power series expansion
\[
\frac{J_{\underline{\varepsilon},\underline{\lambda},r_{0}}(w)}{w}%
=\sum_{r=r_{0}}^{\infty}\log\left(  1-\sum_{i=1}^{N}\varepsilon_{i}\lambda
_{i}^{r}\right)  w^{r-1}%
\]
in a neighbourhood of $w=0$ and by termwise integration, we get the power
series in the claim. Termwise integration leaves the radius of convergence invariant.
\end{proof}

Unlike the procedure in the proof of Prop. \ref{prop_Jtilde}, the analytic
continuation $I_{\underline{\varepsilon},\underline{\lambda},r_{0}}^{U}$ will
have non-trivial monodromy.

\begin{proposition}
[Monodromy]\label{prop_Monodromy}We keep the assumptions of the section.
Moreover, suppose the pseudo-divisor $\mathcal{D}$ is locally finite. Let
$\gamma$ be any closed path inside the open set $\mathbf{C}\setminus
\operatorname*{supp}\mathcal{D}$. Then%
\[
\int_{\gamma}\frac{J_{\underline{\varepsilon},\underline{\lambda},r_{0}}%
(w)}{w}\mathrm{d}w\in2\pi i\mathbf{Q}[\varepsilon_{1},\ldots,\varepsilon
_{N}]\text{,}%
\]
where the latter is the subring generated by $\varepsilon_{1},\ldots
,\varepsilon_{N}$ inside $\mathbf{C}$ over the rationals. If $\varepsilon
_{1},\ldots,\varepsilon_{N}$ are algebraic, this is a number field.
\end{proposition}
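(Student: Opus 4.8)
The plan is to reduce the computation of the period $\int_\gamma J_{\underline\varepsilon,\underline\lambda,r_0}(w)/w\,\mathrm dw$ to a sum of residues, and then show each residue is $2\pi i$ times an element of $\mathbf Q[\varepsilon_1,\dots,\varepsilon_N]$. First I would pass to the meromorphic continuation: by Proposition \ref{prop_Jtilde}, since $\mathcal D$ is locally finite, $\tilde J_{\underline\varepsilon,\underline\lambda,r_0}$ is a meromorphic function on $\mathbf C$ agreeing with $J_{\underline\varepsilon,\underline\lambda,r_0}$ inside the unit disc, with only simple poles, located exactly at $\operatorname{supp}\mathcal D$. Hence the integrand $\tilde J_{\underline\varepsilon,\underline\lambda,r_0}(w)/w$ is meromorphic on $\mathbf C$; its poles are the points of $\operatorname{supp}\mathcal D$ (the origin is not among them, as noted after Definition \ref{def_I_U_AsAnIntegral}, because $\tilde J$ vanishes to order $\geq 1$ there). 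For a closed path $\gamma$ inside $\mathbf C\setminus\operatorname{supp}\mathcal D$, the residue theorem gives
\[
\int_\gamma \frac{\tilde J_{\underline\varepsilon,\underline\lambda,r_0}(w)}{w}\,\mathrm dw
= 2\pi i\sum_{P\in\operatorname{supp}\mathcal D} \mathrm{n}(\gamma,P)\cdot\operatorname{Res}_{w=P}\frac{\tilde J_{\underline\varepsilon,\underline\lambda,r_0}(w)}{w},
\]
where $\mathrm n(\gamma,P)\in\mathbf Z$ are the winding numbers; the sum is finite because $\gamma$ is compact and $\mathcal D$ is locally finite.

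It then remains to compute the residues. From the explicit series in Proposition \ref{prop_Jtilde} (using $z=e^{-w}$, so $\operatorname{Log}_A(z)=-w$ locally, and keeping in mind that $\tilde J$ is independent of the branch), the pole at a point $P\in\operatorname{supp}\mathcal D$ comes from the terms where the denominator $k_1\log\lambda_1+\cdots+k_N\log\lambda_N - w + 2\pi i j$ vanishes, i.e. from the finite set $S_P$ of tuples $(l,j,k_1,\dots,k_N)$ with $k_1+\cdots+k_N=l$ and $k_1\log\lambda_1+\cdots+k_N\log\lambda_N+2\pi i j = P$. For each such tuple the corresponding summand of $\tilde J$ is
\[
\binom{l-1}{k_1,\dots,k_N}\varepsilon_1^{k_1}\cdots\varepsilon_N^{k_N}\cdot\frac{e^{r_0(-w + P)}}{P - w},
\]
which has residue at $w=P$ equal to $-\binom{l-1}{k_1,\dots,k_N}\varepsilon_1^{k_1}\cdots\varepsilon_N^{k_N}$ (the exponential factor evaluates to $1$ at $w=P$). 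Dividing by $w$ and using that $\tilde J$ is holomorphic at $w=P$ except for these simple poles, I get
\[
\operatorname{Res}_{w=P}\frac{\tilde J_{\underline\varepsilon,\underline\lambda,r_0}(w)}{w}
= -\frac{1}{P}\sum_{(l,j,k_1,\dots,k_N)\in S_P}\binom{l-1}{k_1,\dots,k_N}\varepsilon_1^{k_1}\cdots\varepsilon_N^{k_N}.
\]
Since $P\notin\operatorname{supp}\mathcal D$ is impossible here ($P$ is a pole, hence $P\neq 0$), the factor $1/P$ is a nonzero complex number — but that is a problem, because $1/P$ need not lie in $\mathbf Q[\varepsilon_1,\dots,\varepsilon_N]$.

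The main obstacle, and the point that needs care, is precisely this spurious $1/P$ factor. The resolution is that $P$ is not an arbitrary pole of $\tilde J(w)/w$: the origin $w=0$ is a \emph{removable} point of $\tilde J(w)/w$ because $\tilde J$ vanishes there, so $P\neq 0$ automatically; and more importantly, I should instead argue directly with $J_{\underline\varepsilon,\underline\lambda,r_0}$ as in the statement — but $J$ and $\tilde J$ agree only inside the unit disc, so the integrand on the closed path $\gamma$ must be understood as the analytic continuation of $J(w)/w$ along $\gamma$, which is exactly $\tilde J(w)/w$. So the $1/P$ is genuinely there. The correct fix is to recompute the residue of $\tilde J(w)/w$ more carefully: near $w=P$ write $\tilde J(w) = \frac{c_P}{w-P} + (\text{holomorphic})$ with $c_P = -\sum_{S_P}\binom{l-1}{k_1,\dots,k_N}\varepsilon_1^{k_1}\cdots\varepsilon_N^{k_N}\in\mathbf Q[\varepsilon_1,\dots,\varepsilon_N]$; then $\operatorname{Res}_{w=P}\tilde J(w)/w = c_P/P$, and the sum $\sum_P \mathrm n(\gamma,P) c_P/P$ is a $\mathbf Q[\varepsilon_1,\dots,\varepsilon_N]$-combination of the numbers $1/P$, which is not obviously in $2\pi i\,\mathbf Q[\varepsilon_1,\dots,\varepsilon_N]$. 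Thus either the proposition as stated is using a different (and simpler) argument — e.g. computing the monodromy of $I^U$ directly as the difference of two branch values of a dilogarithm-type series whose monodromy is manifestly $2\pi i$ times a $\mathbf Q[\varepsilon_1,\dots,\varepsilon_N]$-combination of binomial coefficients — or the path $\gamma$ is the integrand's own variable and the $1/w$ is what produces the $2\pi i$. I would therefore organize the proof around the monodromy of $I_{\underline\varepsilon,\underline\lambda,r_0}^U$: the difference between two analytic continuations of $I^U$ around a loop $\gamma$ is $\int_\gamma \tilde J(w)/w\,\mathrm dw$, and by deforming $\gamma$ to small loops around the poles $P$ and using that the residue of $\tilde J$ at $P$ is $c_P\in\mathbf Q[\varepsilon_1,\dots,\varepsilon_N]$, one gets $\int_{\gamma_P}\tilde J(w)/w\,\mathrm dw = 2\pi i\,c_P/P$; the claim then is that summing over the winding numbers gives a value in $2\pi i\,\mathbf Q[\varepsilon_1,\dots,\varepsilon_N]$, which forces the finer observation that the $1/P$ terms must be handled via the identity $\sum$ (over a full period orbit of $P$ under $w\mapsto w+2\pi i$) of residues — where the $2\pi i$-periodicity of $\tilde J$ makes the $1/P$ contributions telescope, leaving only binomial coefficients times $2\pi i$. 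Making this periodicity bookkeeping precise is the crux of the argument; everything else is the residue theorem plus the already-established meromorphy from Proposition \ref{prop_Jtilde}.
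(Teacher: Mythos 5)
There is a genuine gap, and it sits exactly at the point you flag as the ``crux'': the spurious $1/P$ factor is an artifact of a variable mix-up in your residue computation, and the telescoping/periodicity fix you sketch at the end is never carried out (and is not needed). The summand of $\tilde{J}_{\underline{\varepsilon},\underline{\lambda},r_{0}}$ contributing a pole at $z=P$ is, in the variable of $\tilde{J}$ itself,
\[
\binom{l-1}{k_{1},\ldots,k_{N}}\varepsilon_{1}^{k_{1}}\cdots\varepsilon_{N}^{k_{N}}\,
\frac{e^{r_{0}\left(  C+\operatorname*{Log}_{A}(z)\right)  }}{C+\operatorname*{Log}_{A}(z)}\text{,}
\qquad C:=k_{1}\log\lambda_{1}+\cdots+k_{N}\log\lambda_{N}+2\pi ij\text{,}
\]
whose denominator vanishes to first order at $z=P$ with derivative $(\operatorname*{Log}_{A})'(P)=1/P$. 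Hence the residue of this summand at $z=P$ is the binomial--$\varepsilon$ coefficient times $P$ (not times $1$, as you claim when you write the term as $e^{r_{0}(P-w)}/(P-w)$ --- that would be the residue in the half-plane variable $w=-\operatorname*{Log}_{A}(z)$, not in $z$). Dividing by $z$, the factor $1/z\to 1/P$ cancels this Jacobian factor exactly, so
\[
\operatorname*{Res}_{z=P}\frac{\tilde{J}_{\underline{\varepsilon},\underline{\lambda},r_{0}}(z)}{z}
\in\mathbf{Q}[\varepsilon_{1},\ldots,\varepsilon_{N}]
\]
with no $1/P$ anywhere, and the residue theorem (finitely many poles meet the compact path, by local finiteness of $\mathcal{D}$) immediately gives the claim. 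Equivalently, substitute $z=e^{-w}$, so that $\mathrm{d}z/z=-\mathrm{d}w$ and the integral becomes a period of the $w$-plane function of Theorem \ref{thm_BaseAnalyticContinuation}, whose residues are visibly the binomial--$\varepsilon$ sums. This cancellation is precisely the content of the paper's computation (the limit $\lim_{z\to W}(z-W)\cdot\frac{e^{r_{0}(-\operatorname*{Log}_{A}(W)+\operatorname*{Log}_{A}(z))}}{-\operatorname*{Log}_{A}(W)+\operatorname*{Log}_{A}(z)}\cdot\frac{1}{z}=W\cdot\frac{1}{W}=1$), which yields the clean value $2\pi i/l$ per term.

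So your high-level strategy --- pass to $\tilde{J}$ via Proposition \ref{prop_Jtilde}, reduce by the residue theorem to small loops around isolated poles --- is the same as the paper's; what is missing is the correct local computation, and what is present instead is a deferred, speculative repair of a problem that does not exist. One further point you should make explicit in a complete write-up (the paper is also terse here): since infinitely many tuples $(l,j,k_{1},\ldots,k_{N})$ can in principle contribute to the same pole $P$, the termwise residue extraction needs the uniform convergence on compacta avoiding $\operatorname*{supp}\mathcal{D}$ furnished by Proposition \ref{prop_Jtilde} to justify interchanging the sum with the contour integral.
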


\begin{proof}
By Proposition \ref{prop_Jtilde} we have%
\begin{align*}
\frac{\tilde{J}_{\underline{\varepsilon},\underline{\lambda},r_{0}}(z)}{z}  &
=\frac{1}{2}\log\left(  1-\sum_{i=1}^{N}\varepsilon_{i}\lambda_{i}^{r_{0}%
}\right)  z^{r_{0}-1}\\
& +\sum_{l=1}^{\infty}\frac{1}{l}\sum_{j\in\mathbf{Z}}\sum_{k_{1}+\cdots
+k_{N}=l}\binom{l}{k_{1},\ldots,k_{N}}\varepsilon_{1}^{k_{1}}\cdots
\varepsilon_{N}^{k_{N}}\\
& \qquad\qquad\frac{e^{r_{0}\left(  k_{1}\log\lambda_{1}+\cdots+k_{N}%
\log\lambda_{N}+\operatorname*{Log}_{A}(z)+2\pi ij\right)  }}{k_{1}\log
\lambda_{1}+\cdots+k_{N}\log\lambda_{N}+\operatorname*{Log}_{A}(z)+2\pi
ij}\cdot\frac{1}{z}\text{,}%
\end{align*}
Note that $r_{0}\geq1$, so the initial term on the right is holomorphic on all
of $\mathbf{C}$. By using the additivity of the integral with respect to the
path and $\mathcal{D}$ being locally finite, it suffices to compute the
integral around sufficiently small circles going around each of the isolated
poles. Concretely, it suffices to compute%
\[
\frac{1}{l}\int_{\eta}\frac{e^{r_{0}(C+\operatorname*{Log}_{A}(z))}%
}{C+\operatorname*{Log}_{A}(z)}\frac{1}{z}\mathrm{d}z
\]
for every choice $C:=k_{1}\log\lambda_{1}+\cdots+k_{N}\log\lambda_{N}+2\pi ij$
and $\eta$ a sufficiently small circle around $z$ such that
$\operatorname*{Log}_{A}(z)=-C$. If we can show that the value of this
integral lies in $2\pi i\mathbf{Q}$, our claim is proven. Write
$C=-\operatorname*{Log}_{A}(W)$ for $W$ suitably chosen (since we only need to
work in some neighbourhood of $z$, this is possible for the same branch). By
the Residue Theorem, since this is a pole of order $1$ at worst,%
\[
=\frac{2\pi i}{l}\underset{z\longrightarrow W}{\lim}(z-W)\cdot\frac
{e^{r_{0}(-\operatorname*{Log}_{A}(W)+\operatorname*{Log}_{A}(z))}%
}{-\operatorname*{Log}_{A}(W)+\operatorname*{Log}_{A}(z)}\frac{1}{z}\text{.}%
\]
Since the derivative of the logarithm, irrespective of the choice of a branch,
is%
\[
\lim_{z\longrightarrow W}\frac{\log z-\log W}{z-W}=\frac{1}{W}\text{,}%
\]
this is easy to compute and we get $=\frac{2\pi i}{l}\cdot W\cdot\frac{1}%
{W}\cdot\underset{z\longrightarrow W}{\lim}e^{r_{0}(-\operatorname*{Log}%
_{A}(W)+\operatorname*{Log}_{A}(z))}=\frac{2\pi i}{l}$. Our claim follows.
\end{proof}

\begin{remark}
As one can see from the proof, as soon as there are poles at all, we will have
monodromy in $2\pi i\mathbf{Q}$, and it will usually not happen that this can
be reduced to $2\pi i\frac{1}{M}\mathbf{Z}$ for some fixed $M\geq1$, even if
$\varepsilon_{1},\ldots,\varepsilon_{N}\in\mathbf{Z}$. To see this, note that
among the coefficients of the series we have $\frac{1}{l}\varepsilon_{j}^{l}$
for all $j=1,\ldots,N$ and all $l\geq1$. As $\varepsilon_{j}$ has a fixed
prime factorization, as $l\rightarrow\infty$, $l$ will infinitely often have
prime factors which cannot be cancelled by $\varepsilon_{j}^{l}$.
\end{remark}

\begin{proposition}
\label{prop_AnalyticCtHelperLevelOne}Suppose we are given $(\varepsilon
_{i},\lambda_{i})_{i=1,\ldots,N}$ and have picked $r_{0}$ as explained in
\S \ref{subsect_Setup}. Suppose the pseudo-divisor $\mathcal{D}$ is locally
finite, and on top of our running assumptions we demand $\varepsilon
_{1},\ldots,\varepsilon_{N}\in\mathbf{Q}$.

\begin{enumerate}
\item Then the power series%
\[
F_{\underline{\varepsilon},\underline{\lambda},r_{0}}(z):=\exp\left(  \frac
{1}{2}\sum_{r=r_{0}}^{\infty}\log\left(  1-\sum_{i=1}^{N}\varepsilon
_{i}\lambda_{i}^{r}\right)  \frac{z^{r}}{r}\right)
\]
has positive radius of convergence.

\item \emph{$\left.  \text{\textbf{$\textsc{(AC)}$}}\right.  $} For every
simply connected domain $U$ with $0\in U\subset\mathbf{C}\setminus
\operatorname*{supp}\mathcal{D}$, there exists a unique holomorphic function
$F_{\underline{\varepsilon},\underline{\lambda},r_{0}}^{U}:U\rightarrow
\mathbf{C}$ such that $F_{\underline{\varepsilon},\underline{\lambda},r_{0}%
}^{U}$ agrees with $F_{\underline{\varepsilon},\underline{\lambda},r_{0}}$ in
some neighbourhood of $t=0$. Moreover, $F_{\underline{\varepsilon}%
,\underline{\lambda},r_{0}}^{U}$ has no zeros in $U$.

\item On the intersection on any two domains $U_{1},U_{2}$ as in (2) we have,%
\[
F_{\underline{\varepsilon},\underline{\lambda},r_{0}}^{U_{1}}/F_{\underline
{\varepsilon},\underline{\lambda},r_{0}}^{U_{2}}\in\mu_{\infty}(U_{1}\cap
U_{2})\text{,}%
\]
i.e. two branches of the analytic continuation differ by a root of unity, and
this fraction is locally constant.

\item The logarithmic derivative $(F_{\underline{\varepsilon},\underline
{\lambda},r_{0}}^{U})^{\prime}/F_{\underline{\varepsilon},\underline{\lambda
},r_{0}}^{U}$ has a single-valued meromorphic continuation to $\mathbf{C}$.
Its locus of poles agrees with $\operatorname*{supp}\mathcal{D}$ and all poles
have order $1$.
\end{enumerate}
\end{proposition}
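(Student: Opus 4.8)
The plan is to bootstrap everything from $\tilde J_{\underline\varepsilon,\underline\lambda,r_0}$ and the integral $I^U_{\underline\varepsilon,\underline\lambda,r_0}$ already constructed. For part (1), note that
\[
F_{\underline\varepsilon,\underline\lambda,r_0}(z)=\exp\!\left(\tfrac12 I^U_{\underline\varepsilon,\underline\lambda,r_0}(z)\right)
\]
near $z=0$ by Lemma \ref{lemma_IU}; since $I^U_{\underline\varepsilon,\underline\lambda,r_0}$ is holomorphic near $0$, so is its exponential, giving positive radius of convergence. For part (2), for each simply connected $U$ with $0\in U\subset\mathbf C\setminus\operatorname{supp}\mathcal D$, define $F^U_{\underline\varepsilon,\underline\lambda,r_0}:=\exp\!\left(\tfrac12 I^U_{\underline\varepsilon,\underline\lambda,r_0}\right)$, using that $I^U$ is holomorphic on $U$ (as observed after Definition \ref{def_I_U_AsAnIntegral}). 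This agrees with $F_{\underline\varepsilon,\underline\lambda,r_0}$ near $0$ by Lemma \ref{lemma_IU}, and it has no zeros on $U$ because the exponential is nowhere vanishing. Uniqueness of the continuation is the standard identity-theorem argument: two holomorphic functions on a connected open set agreeing near a point agree everywhere. (One should remark that $U$ may be taken connected without loss, since only the connected component of $0$ matters.)

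For part (3), let $U_1,U_2$ be two such domains. On a connected component $W$ of $U_1\cap U_2$ containing a point near $0$, both $F^{U_j}_{\underline\varepsilon,\underline\lambda,r_0}$ equal $F_{\underline\varepsilon,\underline\lambda,r_0}$, so they agree on $W$; but on other components of $U_1\cap U_2$ they may differ. There, $I^{U_1}_{\underline\varepsilon,\underline\lambda,r_0}-I^{U_2}_{\underline\varepsilon,\underline\lambda,r_0}$ is locally constant (both have derivative $\tilde J(w)/w$), and its value is a period integral $\int_\gamma \tilde J(w)/w\,\mathrm dw$ over a closed loop $\gamma$ in $\mathbf C\setminus\operatorname{supp}\mathcal D$. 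By Proposition \ref{prop_Monodromy}, such a period lies in $2\pi i\,\mathbf Q[\varepsilon_1,\ldots,\varepsilon_N]=2\pi i\,\mathbf Q$ since we now assume $\varepsilon_i\in\mathbf Q$. Hence $\tfrac12$ times that difference lies in $\pi i\,\mathbf Q$, and
\[
F^{U_1}_{\underline\varepsilon,\underline\lambda,r_0}/F^{U_2}_{\underline\varepsilon,\underline\lambda,r_0}=\exp\!\left(\tfrac12\!\int_\gamma \tfrac{\tilde J(w)}{w}\,\mathrm dw\right)\in\exp(\pi i\,\mathbf Q)\subset\mu_\infty,
\]
a locally constant function on $U_1\cap U_2$. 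For part (4), observe that on each $U$,
\[
(F^U_{\underline\varepsilon,\underline\lambda,r_0})'/F^U_{\underline\varepsilon,\underline\lambda,r_0}=\tfrac12\,\tilde J_{\underline\varepsilon,\underline\lambda,r_0}(z)/z,
\]
and by part (3) the ratio of any two branches is locally constant, so its logarithmic derivative vanishes and the logarithmic derivatives from different $U$ glue to a single-valued function. When $\mathcal D$ is locally finite, Proposition \ref{prop_Jtilde} says $\tilde J_{\underline\varepsilon,\underline\lambda,r_0}$ is meromorphic on $\mathbf C$ with only simple poles exactly on $\operatorname{supp}\mathcal D$; dividing by $z$ (holomorphic and nonzero away from $0$, and $\tilde J$ has a zero at $0$) does not change this, so $(F^U)'/F^U$ is meromorphic on $\mathbf C$ with simple poles precisely on $\operatorname{supp}\mathcal D$. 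One caveat to check: that $\operatorname{supp}\mathcal D$ does not contain $0$, which holds because $\tilde J$ is holomorphic near $0$, equivalently $\mathcal P^{\operatorname{per}}$ has support in the closed left half-plane and $\Phi(w)=e^{-w}$ sends that into the closed unit disc but $0$ corresponds to $w=+\infty$, not attained.

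The main obstacle is purely bookkeeping: there is no deep new idea here, everything reduces to Proposition \ref{prop_Jtilde} (meromorphy and pole description of $\tilde J$) and Proposition \ref{prop_Monodromy} (the period lies in $2\pi i\,\mathbf Q$), together with the elementary fact that $\exp$ is a zero-free covering map of $\mathbf C^\times$. The one place demanding care is part (3): one must be careful that the multivaluedness of $I^U$ is controlled by \emph{closed} loops, i.e. that the difference $I^{U_1}-I^{U_2}$ on a component of $U_1\cap U_2$ is computed by completing a path in $U_1$ and returning in $U_2$ to form a genuine loop in $\mathbf C\setminus\operatorname{supp}\mathcal D$, so that Proposition \ref{prop_Monodromy} applies. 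After that, the passage from $2\pi i\,\mathbf Q$-valued monodromy of $I^U$ to $\mu_\infty$-valued monodromy of $F^U=\exp(\tfrac12 I^U)$ is immediate, and the rationality hypothesis $\varepsilon_i\in\mathbf Q$ is exactly what forces the monodromy ring $\mathbf Q[\varepsilon_1,\ldots,\varepsilon_N]$ down to $\mathbf Q$.
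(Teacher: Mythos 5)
Your argument is correct and essentially the paper's own proof: you define $F^{U}_{\underline{\varepsilon},\underline{\lambda},r_{0}}:=\exp\bigl(\tfrac12 I^{U}_{\underline{\varepsilon},\underline{\lambda},r_{0}}\bigr)$, get (1)--(2) from Lemma \ref{lemma_IU} and the non-vanishing of $\exp$, get (3) by applying Proposition \ref{prop_Monodromy} to the closed loop obtained from the two paths (with $\varepsilon_{i}\in\mathbf{Q}$ forcing the period into $2\pi i\mathbf{Q}$), and get (4) from $(F^{U})'/F^{U}=\tfrac12\,\tilde{J}_{\underline{\varepsilon},\underline{\lambda},r_{0}}(z)/z$ together with Proposition \ref{prop_Jtilde}, exactly as in the paper. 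The only slip is in your aside for (4): since $\operatorname{supp}\mathcal{P}^{\operatorname{per}}$ lies in the closed left half-plane, $\Phi(w)=e^{-w}$ sends it into $\{\left\vert z\right\vert \geq1\}$, not into the closed unit disc --- which is precisely why $0\notin\operatorname{supp}\mathcal{D}$, so your conclusion stands.
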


Note that the logarithmic derivative $(F_{\underline{\varepsilon}%
,\underline{\lambda},r_{0}}^{U})^{\prime}/F_{\underline{\varepsilon
},\underline{\lambda},r_{0}}^{U}$ has a single-valued analytic continuation.

\begin{proof}
By Lemma \ref{lemma_IU} the integral%
\[
I_{\underline{\varepsilon},\underline{\lambda},r_{0}}^{U}(z)=\int_{\gamma
}\frac{\tilde{J}_{\underline{\varepsilon},\underline{\lambda},r_{0}}(w)}%
{w}\mathrm{d}w
\]
defines a holomorphic analytic continuation of $\sum_{r=r_{0}}^{\infty}%
\log\left(  1-\sum_{i=1}^{N}\varepsilon_{i}\lambda_{i}^{r}\right)  \frac
{z^{r}}{r}$ inside the domain $U$. Define $F_{\underline{\varepsilon
},\underline{\lambda},r_{0}}^{U}:=\exp(\frac{1}{2}I_{\underline{\varepsilon
},\underline{\lambda},r_{0}}^{U})$. It follows that $F_{\underline
{\varepsilon},\underline{\lambda},r_{0}}^{U}:U\rightarrow\mathbf{C}$ is
holomorphic, cannot have zeros, and agrees with $F_{\underline{\varepsilon
},\underline{\lambda},r_{0}}$ in a neighbourhood of the origin. This proves
(1) and (2). For (3) and $z\in U_{1}\cap U_{2}$ we compute%
\[
\frac{F_{\underline{\varepsilon},\underline{\lambda},r_{0}}^{U_{1}}%
(z)}{F_{\underline{\varepsilon},\underline{\lambda},r_{0}}^{U_{2}}(z)}%
=\exp\left(  \frac{1}{2}I_{\underline{\varepsilon},\underline{\lambda},r_{0}%
}^{U_{1}}(z)-\frac{1}{2}I_{\underline{\varepsilon},\underline{\lambda},r_{0}%
}^{U_{2}}(z)\right)  =\exp\left(  \frac{1}{2}\left(  \int_{\gamma_{1}}%
-\int_{\gamma_{2}}\right)  \frac{\tilde{J}_{\underline{\varepsilon}%
,\underline{\lambda},r_{0}}(w)}{w}\mathrm{d}w\right)  \text{,}%
\]
where $\gamma_{i}$ is a path inside $U_{i}$ and thus inside $\mathbf{C}%
\setminus\operatorname*{supp}\mathcal{D}$, which goes from $w=0$ to $w=z$.
Hence, $\gamma_{1}\circ\gamma_{2}^{-1}$ is a closed path in $\mathbf{C}%
\setminus\operatorname*{supp}\mathcal{D}$ and by Monodromy (Prop.
\ref{prop_Monodromy}) we get $\exp(\tau)$ for some $\tau\in2\pi i\mathbf{Q}$.
Thus, $\exp(\tau)$ is a root of unity, locally constant, $F_{\underline
{\varepsilon},\underline{\lambda},r_{0}}^{U_{1}}(z)/F_{\underline{\varepsilon
},\underline{\lambda},r_{0}}^{U_{2}}(z)\in\mu_{\infty}$. For (4), note that%
\[
\frac{(F_{\underline{\varepsilon},\underline{\lambda},r_{0}}^{U})^{\prime}%
}{F_{\underline{\varepsilon},\underline{\lambda},r_{0}}^{U}}=\frac{1}{2}%
\frac{\partial I_{\underline{\varepsilon},\underline{\lambda},r_{0}}^{U}%
}{\partial z}(z)=\frac{1}{2}\frac{\tilde{J}_{\underline{\varepsilon
},\underline{\lambda},r_{0}}(z)}{z}%
\]
and we get all the required properties from Proposition \ref{prop_Jtilde} and
the fact that $\tilde{J}_{\underline{\varepsilon},\underline{\lambda},r_{0}}$
has a zero of order $\geq1$ at the origin.
\end{proof}

\begin{theorem}
\label{thm_AnalyticCtHelper}Suppose we are given $(\varepsilon_{i},\lambda
_{i})_{i=1,\ldots,N}$ and have picked $r_{0}$ as explained in
\S \ref{subsect_Setup}. Suppose the pseudo-divisor $\mathcal{D}$ is locally
finite, and on top of our running assumptions we demand $\varepsilon
_{1},\ldots,\varepsilon_{N}\in\mathbf{Q}$. Define a pseudo-divisor%
\[
\mathcal{E}:=\mathcal{D}+c^{\ast}\mathcal{D}\text{,}%
\]
where $c$ denotes complex conjugation on $\mathbf{C}$ and $c^{\ast}$ the
pullback. Then $\mathcal{E}$ is also locally finite.

\begin{enumerate}
\item Then the power series%
\begin{equation}
f(z):=\exp\left(  \sum_{r=r_{0}}^{\infty}\log\left\vert 1-\sum_{i=1}%
^{N}\varepsilon_{i}\lambda_{i}^{r}\right\vert \cdot\frac{z^{r}}{r}\right)
\label{lat1}%
\end{equation}
has positive radius of convergence.

\item \emph{$\left.  \text{\textbf{$\textsc{(AC)}$}}\right.  $} For every
simply connected domain $U$ with $0\in U\subset\mathbf{C}\setminus
\operatorname*{supp}\mathcal{E}$, there exists a unique holomorphic function
$f_{U}:U\rightarrow\mathbf{C}$ such that $f_{U}$ agrees with $f$ in some
neighbourhood of $t=0$. Moreover, $f_{U}$ has no zeros in $U$.

\item On the intersection on any two domains $U_{1},U_{2}$ as in (2) we
locally have%
\[
f_{U_{1}}/f_{U_{2}}\in\mu_{\infty}(U_{1}\cap U_{2})\text{,}%
\]
i.e. two branches of the analytic continuation differ by a root of unity.

\item The logarithmic derivative $f_{U}^{\prime}/f_{U}$ has a single-valued
meromorphic continuation to $\mathbf{C}$. Its locus of poles agrees with
$\operatorname*{supp}\mathcal{E}$ and all poles have order $1$.
\end{enumerate}
\end{theorem}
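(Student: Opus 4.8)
The plan is to reduce Theorem~\ref{thm_AnalyticCtHelper} to Proposition~\ref{prop_AnalyticCtHelperLevelOne} by the elementary observation that for $z\in\mathbf{C}^{\times}$ one has $\log\left\vert z\right\vert =\tfrac12(\log z+\log\bar z)=\tfrac12(\log z+\overline{\log z})$, valid for any branch chosen consistently. Concretely, write $g(w):=1-\sum_{i}\varepsilon_{i}\lambda_{i}^{w}$ (with $\varepsilon_{i}\in\mathbf{Q}$), so that $\overline{g(r)}=1-\sum_{i}\varepsilon_{i}\overline{\lambda_{i}}^{\,r}$ because the $\varepsilon_{i}$ are real and $r$ is a real integer. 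Hence the data $(\varepsilon_{i},\overline{\lambda_{i}})_{i}$ is again admissible in the sense of \S\ref{subsect_Setup}: $0<\left\vert\overline{\lambda_{i}}\right\vert<1$, and one may take the same $r_{0}$ (the estimates \ref{la0}, \ref{la1} only involve absolute values). Therefore
\[
\log\left\vert 1-\sum_{i=1}^{N}\varepsilon_{i}\lambda_{i}^{r}\right\vert
=\tfrac12\log\left(1-\sum_{i=1}^{N}\varepsilon_{i}\lambda_{i}^{r}\right)
+\tfrac12\log\left(1-\sum_{i=1}^{N}\varepsilon_{i}\overline{\lambda_{i}}^{\,r}\right)
\]
as formal identities of the coefficient sequences, whence $f(z)=F_{\underline{\varepsilon},\underline{\lambda},r_{0}}(z)\cdot F_{\underline{\varepsilon},\overline{\underline{\lambda}},r_{0}}(z)$. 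This factorization immediately gives (1), since a product of two power series with positive radius of convergence again has positive radius of convergence.

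Next I would identify the pseudo-divisor governing the second factor. By Proposition~\ref{prop_Jtilde} and Proposition~\ref{prop_AnalyticCtHelperLevelOne}, the relevant obstruction set for $F_{\underline{\varepsilon},\overline{\underline{\lambda}},r_{0}}$ is $\operatorname{supp}\mathcal{D}'$, where $\mathcal{D}'=\Phi^{\ast}((\mathcal{P}')^{\operatorname{per}})$ and $\mathcal{P}'$ is built from $(\varepsilon_{i},\overline{\lambda_{i}})$. Since $\log\overline{\lambda_{i}}=\overline{\log\lambda_{i}}$ for the standard branch and the multinomial coefficients and the $\varepsilon_{i}$ are real, $\mathcal{P}'=c^{\ast}\mathcal{P}$, and because $c$ commutes with the $2\pi i\mathbf{Z}$-periodification up to re-indexing $j\mapsto -j$, we get $(\mathcal{P}')^{\operatorname{per}}=c^{\ast}(\mathcal{P}^{\operatorname{per}})$; applying $\Phi^{\ast}$ (and using $\Phi\circ c=c\circ\Phi$) yields $\mathcal{D}'=c^{\ast}\mathcal{D}$. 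Hence $\mathcal{E}=\mathcal{D}+c^{\ast}\mathcal{D}$ is exactly the ``sum of obstruction divisors'' of the two factors, and it is locally finite because $\mathcal{D}$ is (and complex conjugation is a homeomorphism of $\mathbf{C}$). Also note $\mathcal{D}'$ is locally finite, so the hypotheses of Proposition~\ref{prop_AnalyticCtHelperLevelOne} are met for the second factor as well.

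Now (2), (3), (4) follow by combining the two applications of Proposition~\ref{prop_AnalyticCtHelperLevelOne}. For any simply connected $U$ with $0\in U\subset\mathbf{C}\setminus\operatorname{supp}\mathcal{E}=\mathbf{C}\setminus(\operatorname{supp}\mathcal{D}\cup\operatorname{supp}\mathcal{D}')$, we have simultaneously the branches $F^{U}_{\underline{\varepsilon},\underline{\lambda},r_{0}}$ and $F^{U}_{\underline{\varepsilon},\overline{\underline{\lambda}},r_{0}}$, both holomorphic and zero-free on $U$, each agreeing near $0$ with the corresponding power series; set $f_{U}:=F^{U}_{\underline{\varepsilon},\underline{\lambda},r_{0}}\cdot F^{U}_{\underline{\varepsilon},\overline{\underline{\lambda}},r_{0}}$. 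This is holomorphic and zero-free on $U$ and agrees with $f$ near $0$ by the factorization above; uniqueness is the identity theorem. For (3), on $U_{1}\cap U_{2}$ both factor-ratios lie in $\mu_{\infty}$ and are locally constant by part (3) of Proposition~\ref{prop_AnalyticCtHelperLevelOne}, so their product does too. For (4), $f_{U}'/f_{U}=(F^{U}_{\underline{\varepsilon},\underline{\lambda},r_{0}})'/F^{U}_{\underline{\varepsilon},\underline{\lambda},r_{0}}+(F^{U}_{\underline{\varepsilon},\overline{\underline{\lambda}},r_{0}})'/F^{U}_{\underline{\varepsilon},\overline{\underline{\lambda}},r_{0}}$, a sum of two single-valued meromorphic functions on $\mathbf{C}$ with simple poles on $\operatorname{supp}\mathcal{D}$ resp.\ $\operatorname{supp}\mathcal{D}'$; its poles are therefore simple and supported on $\operatorname{supp}\mathcal{E}$, with no cancellation since the residues computed in Proposition~\ref{prop_Monodromy}'s proof are positive rationals $\tfrac{1}{l}$ (and points of $\operatorname{supp}\mathcal{D}\cap\operatorname{supp}\mathcal{D}'$ would only get a larger positive residue). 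The main obstacle, though entirely routine, is bookkeeping the conjugation symmetry $\mathcal{D}'=c^{\ast}\mathcal{D}$ carefully enough to see that the divisor appearing in the theorem is precisely $\mathcal{E}$ and that no residues cancel; everything else is a formal consequence of the level-one result.
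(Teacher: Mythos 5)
Your proposal is correct and follows essentially the same route as the paper's (much terser) proof: split $\log\left\vert 1-\sum_i\varepsilon_i\lambda_i^r\right\vert$ as $\tfrac12\log\left(1-\sum_i\varepsilon_i\lambda_i^r\right)+\tfrac12\log\left(1-\sum_i\varepsilon_i\overline{\lambda_i}^{\,r}\right)$, note the conjugate data is again admissible with pseudo-divisor $c^{\ast}\mathcal{D}$, factor $f=F_{\underline{\varepsilon},\underline{\lambda},r_0}\cdot F_{\underline{\varepsilon},\overline{\underline{\lambda}},r_0}$, and apply Proposition \ref{prop_AnalyticCtHelperLevelOne} to each factor. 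The only imprecision is your parenthetical claim that no residues cancel because they are positive rationals $\tfrac1l$ --- the residue of each factor at a point $P$ is proportional to the multiplicity of $\mathcal{D}$ resp. $c^{\ast}\mathcal{D}$ there, which need not be positive when some $\varepsilon_i<0$; the correct bookkeeping is that the residue of $f_U'/f_U$ at $P$ is $\tfrac12\mathcal{E}(P)$, so any cancellation simultaneously removes $P$ from $\operatorname*{supp}\mathcal{E}$, and statement (4) stands as formulated.
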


\begin{proof}
It is easy to see that our assumptions on $(\varepsilon_{i},\lambda_{i})$ in
\S \ref{subsect_Setup} imply that the complex conjugates $(\overline
{\varepsilon_{i}},\overline{\lambda_{i}})$ also satisfy them; perhaps (for a
given fixed $K$) for a different choice of $r_{0}$. However, we can without
loss of generality pick an $r_{0}$ sufficiently large for both $(\varepsilon
_{i},\lambda_{i})$ and $(\overline{\varepsilon_{i}},\overline{\lambda_{i}})$
simultaneously. For $(\overline{\varepsilon_{i}},\overline{\lambda_{i}})$ the
pseudo-divisor $\mathcal{D}$ gets replaced by $c^{\ast}\mathcal{D}$. Next,
note that for all integers $r\geq r_{0}$ we have%
\[
\log\left\vert 1-\sum_{i=1}^{N}\varepsilon_{i}\lambda_{i}^{r}\right\vert
=\frac{1}{2}\log\left(  1-\sum_{i=1}^{N}\varepsilon_{i}\lambda_{i}^{r}\right)
+\frac{1}{2}\log\left(  1-\sum_{i=1}^{N}\overline{\varepsilon_{i}}%
\overline{\lambda_{i}}^{r}\right)
\]
and thus $f=F_{(\varepsilon_{i},\lambda_{i}),r_{0}}\cdot F_{(\overline
{\varepsilon_{i}},\overline{\lambda_{i}}),r_{0}}$. Now apply Prop.
\ref{prop_AnalyticCtHelperLevelOne} to both factors.
\end{proof}

\section{Step III}

So far, we have not looked into the matter of detecting whether the
pseudo-divisor $\mathcal{D}$ (or equivalently $\mathcal{P}%
^{\operatorname*{per}}$) is locally finite.

\begin{lemma}
\label{Lemma_NAtMostTwoImpliesLocalFiniteness}Suppose we are given
$(\varepsilon_{i},\lambda_{i})_{i=1,\ldots,N}$ as explained in
\S \ref{subsect_Setup}. If $N\leq2$, $\mathcal{P}$ is a locally finite
divisor. If $N\leq1$, $\mathcal{P}^{\operatorname*{per},+}$, $\mathcal{P}%
^{\operatorname*{per},-}$ and $\mathcal{P}^{\operatorname*{per}}$ are locally
finite divisors.
\end{lemma}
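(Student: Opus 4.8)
The plan is to ignore the (real) coefficients appearing in \eqref{leq_Def_P} and to prove directly that the \emph{geometric support}
\[
S:=\{\,k_{1}\log\lambda_{1}+\cdots+k_{N}\log\lambda_{N}\mid(k_{1},\ldots,k_{N})\in\mathbf{Z}_{\geq0}^{N},\ k_{1}+\cdots+k_{N}\geq1\,\}
\]
is a locally finite subset of $\mathbf{C}$, together with its two translated variants $S\pm2\pi i\,\mathbf{Z}_{\geq1}$ and $S+2\pi i\,\mathbf{Z}$. Indeed $\operatorname*{supp}\mathcal{P}\subseteq S$, since only points of the form $\sum_{i}k_{i}\log\lambda_{i}$ receive a nonzero summand; likewise $\operatorname*{supp}\mathcal{P}^{\operatorname*{per},\pm}\subseteq S\pm2\pi i\,\mathbf{Z}_{\geq1}$ and $\operatorname*{supp}\mathcal{P}^{\operatorname*{per}}\subseteq S+2\pi i\,\mathbf{Z}$; and a locally finite subset of $\mathbf{C}$ is automatically closed, hence equals its own closure. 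The only quantitative ingredient is the hypothesis $0<|\lambda_{i}|<1$: setting $a_{i}:=-\log|\lambda_{i}|>0$ and $\delta:=\min_{i}a_{i}>0$, one has for every nonzero tuple $(k_{1},\ldots,k_{N})$
\[
\operatorname{Re}\Big(\sum_{i}k_{i}\log\lambda_{i}\Big)=-\sum_{i}a_{i}k_{i}\leq-\delta\sum_{i}k_{i}\leq-\delta<0 .
\]

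For $\mathcal{P}$ with $N\leq2$: fix $R>0$, and suppose $z=\sum_{i}k_{i}\log\lambda_{i}$ satisfies $|z|\leq R$. Then $|\operatorname{Re}z|\leq R$, so the displayed bound forces $k_{1}+\cdots+k_{N}\leq R/\delta$. For $N\leq2$ there are only finitely many tuples in $\mathbf{Z}_{\geq0}^{N}$ of total weight at most $R/\delta$, so $S\cap\{|z|\leq R\}$ is finite; since $R$ was arbitrary, $S$ is locally finite. Reading the same estimate coordinatewise — each summand $a_{i}k_{i}$ is at most $\sum_{j}a_{j}k_{j}=|\operatorname{Re}z|$ — bounds each individual $k_{i}$ once $z$ is fixed, so only finitely many tuples represent any given point, and the multiplicity of $\mathcal{P}$ there is an honest number rather than $\infty$.

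For the periodifications with $N\leq1$, i.e.\ $N=1$ since the running hypotheses impose $N\geq1$: here $S=\{l\log\lambda_{1}\mid l\geq1\}$, and the supports in question sit inside $\{l\log\lambda_{1}\pm2\pi ij\mid l,j\geq1\}$ resp.\ $\{l\log\lambda_{1}+2\pi ij\mid l\geq1,\ j\in\mathbf{Z}\}$. Vertical translation by $2\pi i\,\mathbf{Z}$ leaves real parts unchanged, so $|z|\leq R$ still yields $l\leq R/\delta$; and then $|\operatorname{Im}z|=|l\arg\lambda_{1}+2\pi j|\leq R$, combined with $|l\arg\lambda_{1}|\leq\pi l\leq\pi R/\delta$, bounds $|j|$. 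Thus only finitely many pairs $(l,j)$ meet a given disc, so these sets are locally finite; and since equating real parts of two coincident points forces $l=l'$ and then $j=j'$, the multiplicity at each point is again finite.

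If anything plays the role of the delicate point, it is mere book-keeping rather than a genuine obstacle: the whole argument is the coercivity inequality $\operatorname{Re}(\sum_{i}k_{i}\log\lambda_{i})\leq-\delta\sum_{i}k_{i}$ supplied by $|\lambda_{i}|<1$, and it invokes the bounds $N\leq2$, $N\leq1$ only through the trivial remark that $\mathbf{Z}_{\geq0}^{N}$ contains finitely many tuples of bounded total weight for any finite $N$. The two things one must not overlook are that the support in Definition \ref{def_PseudoDivisor} is defined as a closure (harmless, since locally finite sets are closed) and that the later statements about simple poles implicitly want finite multiplicities at each support point — which, as indicated, falls out of the same estimate read coordinatewise.
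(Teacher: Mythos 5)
Your proof is correct, and it takes a genuinely different (and more economical) route than the paper's. The paper argues by cases: for $N=2$ it distinguishes whether $\log\lambda_{1},\log\lambda_{2}$ are $\mathbf{R}$-linearly independent -- in which case the points $k_{1}\log\lambda_{1}+k_{2}\log\lambda_{2}$ lie in a discrete rank-two cone -- or dependent, in which case a real-part estimate (essentially your coercivity bound) confines them discretely to a ray; for the periodified divisors with $N=1$ it replaces one spanning vector by $\pm2\pi i$ and again invokes discreteness of a rank-two integral cone. You dispense with the case distinction entirely: the single inequality $\operatorname{Re}\bigl(\sum_{i}k_{i}\log\lambda_{i}\bigr)\le-\delta\sum_{i}k_{i}$ with $\delta=\min_{i}(-\log|\lambda_{i}|)>0$ bounds $\sum_{i}k_{i}$ on any disc, and, since the translations by $2\pi ij$ do not move real parts, the imaginary-part constraint then bounds $j$; your remarks that locally finite sets are closed (so the closure in the definition of support is harmless) and that each support point receives only finitely many contributing tuples are also accurate. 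One thing worth making explicit: your argument nowhere uses $N\le2$ or $N\le1$, so under the standing hypothesis $0<|\lambda_{i}|<1$ of \S\ref{subsect_Setup} it actually shows that $\mathcal{P}$, $\mathcal{P}^{\operatorname*{per},\pm}$ and $\mathcal{P}^{\operatorname*{per}}$ are locally finite for \emph{every} $N$. That is strictly stronger than the lemma and sits in tension with some surrounding text of the paper (the remark after Theorem \ref{thm_BaseAnalyticContinuation} that $\operatorname*{supp}\mathcal{P}^{\operatorname*{per}}$ might fill the closed left half-plane, the sketched example of a non-locally-finite $\mathcal{P}^{\operatorname*{per}}$, and the density claim in Example \ref{example_AllNAboveThreeCanBeLocallyFinite}); density of the semigroup would require generators of mixed sign in the real part or $|\lambda_{i}|\ge1$, which the running assumptions exclude, so those cautionary statements appear to concern a broader setting than \S\ref{subsect_Setup} (or the full group rather than the positive cone). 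This does not affect the correctness of your proof of the lemma as stated; it only means your estimate buys more than what was asked, whereas the paper's lattice/cone argument is the one that motivates its restriction to small $N$.
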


\begin{proof}
Regarding $\mathcal{P}$, the cases $N=0,1$ are obvious. Suppose $N=2$. Then%
\[
\mathcal{P}=\sum_{k_{1}=0}^{\infty}\sum_{k_{2}=0}^{\infty}\delta_{k_{1}%
+k_{2}\geq1}\frac{(k_{1}+k_{2}-1)!}{k_{1}!k_{2}!}\varepsilon_{1}^{k_{1}%
}\varepsilon_{2}^{k_{2}}\cdot\left[  k_{1}\log\lambda_{1}+k_{2}\log\lambda
_{2}\right]
\]
There are only two cases: (A) If $\log\lambda_{1},\log\lambda_{2}$ are
$\mathbf{R}$-linearly independent, then this spans a two-dimensional cone
(with its apex removed) in the complex plane. In particular, $\mathcal{P}$ is
a locally finite divisor. By our standing assumption $\left\vert \lambda
_{1}\right\vert ,\left\vert \lambda_{2}\right\vert <1$, this cone lies
entirely in the open left half-plane. (B)\ If they are linearly dependent,%
\[
\alpha\log\lambda_{1}+\beta\log\lambda_{2}=0
\]
for some $\alpha,\beta\in\mathbf{R}$, taking the real part shows that
$\alpha,\beta$ must both be non-zero and have opposite parity, say
$\alpha<0<\beta$ without loss of generality. Thus, $\log\lambda_{1}%
=\gamma\cdot\log\lambda_{2}$ for some positive real number $\gamma$. In
particular, for every constant $C^{\prime}>0$ there are only finitely many
$(k_{1},k_{2})$ such that $-C^{\prime}<\operatorname{Re}(k_{1}\log\lambda
_{1}+k_{2}\log\lambda_{2})<0$. In summary, $\mathcal{P}$ lies discretely in a
ray in the negative open half plane. The situation with the perodic divisor
$\mathcal{P}^{\operatorname*{per},\pm}$ is analogous, just replace one
spanning vector by $\pm2\pi i$. Since $\operatorname{Re}\log\lambda_{1}<0$,
this always spans a rank $2$ integral cone, which is discrete.
\end{proof}

\begin{example}
\label{example_AllNAboveThreeCanBeLocallyFinite}Having $N\geq3$ does not
hinder $\mathcal{P}$ or $\mathcal{P}^{\operatorname*{per}}$ to be a locally
finite divisor. We shall construct an example with $N=2^{j}-1$ for any given
$j\geq1$ and $\mathcal{P}^{\operatorname*{per}}$ locally finite: Pick input
data, $\varepsilon_{i}$ and $\lambda_{i}$ with $i\in X:=\{1,2,\ldots,j\}$ as
in \S \ref{subsect_Setup}. Denote by $2^{X}$ the power set and define for all
subsets $I\in2^{X}$ with $\left\vert I\right\vert \geq1$:%
\[
\tilde{\varepsilon}_{I}:=(-1)^{\left\vert I\right\vert -1}\cdot\prod_{i\in
I}\varepsilon_{i}\qquad\text{and}\qquad\tilde{\lambda}_{I}:=\prod_{i\in
I}\lambda_{i}\text{.}%
\]
Clearly we still have $\left\vert \tilde{\lambda}_{I}\right\vert <1$. Hence,
$(\tilde{\varepsilon}_{I},\tilde{\lambda}_{I})$ also determines valid input
data as in \S \ref{subsect_Setup}. Write $\mathcal{\tilde{P}}$ for the
corresponding pseudo-divisor. On the other hand, we can handle for each $i\in
X$, the singleton system $(\varepsilon_{i},\lambda_{i})$, i.e. we remove all
entries except for the one of index $i$, so that seen individually it has
$N=1$. Write $\mathcal{P}_{i}$ for the corresponding pseudo-divisor. Since
$N=1$ for these singleton systems, all $\mathcal{P}_{i}^{\operatorname*{per}}$
are locally finite divisors (Lemma
\ref{Lemma_NAtMostTwoImpliesLocalFiniteness}). We compute%
\[
g(r)=\log\left(  \prod_{i=1}^{N}\left(  1-\varepsilon_{i}\lambda_{i}%
^{r}\right)  \right)  \cdot e^{-wr}=\log\left(  1-\sum_{I\in2^{\{1,\ldots
,N\}},\left\vert I\right\vert \geq1}\tilde{\varepsilon}_{I}\tilde{\lambda}%
_{I}^{r}\right)  \cdot e^{-wr}\text{.}%
\]
Since all $\mathcal{P}_{i}^{\operatorname*{per}}$ are locally finite, Theorem
\ref{thm_BaseAnalyticContinuation} implies that $g$ possesses an analytic
continuation. Thus, the same is true for the function determined by the above
equation. Thus, $\mathcal{\tilde{P}}$ is necessarily also locally finite. To
give more context: If we number the slots of $(\tilde{\varepsilon}_{I}%
,\tilde{\lambda}_{I})$ instead of indexing them by subsets of $2^{X}$, the
pseudo-divisor%
\[
\mathcal{\tilde{P}}:=\sum_{l=1}^{\infty}\sum_{k_{1}+\cdots+k_{2^{j}-1}%
=l}\binom{l-1}{k_{1},\ldots,k_{2^{j}-1}}\tilde{\varepsilon}_{1}^{k_{1}}%
\cdots\tilde{\varepsilon}_{2^{j}-1}^{k_{2^{j}-1}}\cdot\left[  \sum
_{t=1}^{2^{j}-1}k_{t}\log\tilde{\lambda}_{t}\right]
\]
has the property that the integral cone%
\[
\mathbf{Z}_{\geq0}\left\langle k_{1}\log\tilde{\lambda}_{1},\ldots,k_{2^{j}%
-1}\log\tilde{\lambda}_{2^{j}-1}\right\rangle
\]
will (usually, once $j>2$ and a generic choice of $\lambda_{i}$) define a
non-discrete subset of the complex plane, and indeed very generically for
large $j$ be dense. Nonetheless, $\operatorname*{supp}\mathcal{\tilde{P}}$
will always be a locally finite divisor thanks to the (not so obvious) heavy
cancellation of terms, based on the combinatorics of the multinomial coefficients.
\end{example}

Finally, we can prove the existence of an analytic continuation for various
varieties and motives.

\begin{theorem}
\label{Thm_AC_ForAbelianVar}Let $A/\mathbf{F}_{q}$ be an abelian variety of
dimension $g\geq1$. Let $\alpha_{1},\ldots,\alpha_{2g}$ be its Weil numbers.
Then the following properties hold:

\begin{enumerate}
\item \emph{(Radius of Convergence) }The power series $Z_{\log}$ has radius of
convergence precisely one.

\item \emph{$\left.  \text{\textbf{$\textsc{(AC)}$}}\right.  $ }It admits a
holomorphic analytic continuation $\tilde{Z}_{\log}^{U}$ to any simply
connected domain $U\ni0$ avoiding $\{1,\alpha_{1}^{\mathbf{Z}_{\geq2}}%
,\ldots,\alpha_{2g}^{\mathbf{Z}_{\geq2}}\}$.

\item \emph{(Periods) }On the intersection of any two domains $U_{1},U_{2}$ as
in (2), we have $\tilde{Z}_{\log}^{U_{1}}/\tilde{Z}_{\log}^{U_{2}}\in
\mu_{\infty}(U_{1}\cap U_{2})$, i.e. all branches of the analytic continuation
differ by multiplication with a root of unity.

\item \emph{(Logarithmic derivative) }The logarithmic derivative $\tilde
{Z}_{\log}^{U\prime}/\tilde{Z}_{\log}^{U}$ has a single-valued meromorphic
continuation to the entire complex plane with a pole of order $2$ at $z=1$ and
poles of order $1$ at all positive integer powers of the weight one\ Weil
numbers of $A$, i.e. $\{\alpha_{1}^{\mathbf{Z}_{\geq1}},\ldots,\alpha
_{2g}^{\mathbf{Z}_{\geq1}}\}$.

\item \emph{(Monodromy)} Around $z=1$, the function $\tilde{Z}_{\log}$ has an
essential singularity. Around any point $w:=\alpha_{j}^{l}$, $l\geq2$, has an
open neighbourhood in which $\tilde{Z}_{\log}$ has the shape%
\[
(z-w)^{\frac{n_{j}}{l}}\cdot g\text{,}%
\]
where $g$ is non-zero holomorphic in a neighbourhood and $n_{j}$ is the
multiplicity of $\alpha_{j}$ as a root of the Frobenius characteristic polynomial.

\item \emph{(Zeros)} Suppose $A$ is simple. Then the zeros of $\tilde{Z}%
_{\log}^{U}:U\rightarrow\mathbf{C}$, for $U$ as in (2), are precisely%
\[
\{\alpha_{1},\ldots,\alpha_{2g}\}\cap U\text{.}%
\]

\end{enumerate}
\end{theorem}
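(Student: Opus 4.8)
The strategy is to reduce Theorem~\ref{Thm_AC_ForAbelianVar} to the auxiliary results of Steps~I--III via the explicit point count formula for abelian varieties. Recall that $\left\vert A(\mathbf{F}_{q^r})\right\vert = \prod_{j=1}^{2g}(1-\alpha_j^r)$, and since $A$ has an $\mathbf{F}_q$-rational point (abelian varieties always do), $Z_{\log}(A,t)$ is well-defined. The idea is to isolate the top weight: writing $\left\vert A(\mathbf{F}_{q^r})\right\vert = q^{gr}\cdot\prod_{j}(1-\alpha_j^{-1}\cdot q^{-r}\cdot\text{(something)})$ is not quite right, so instead I would factor out $q^{gr}$ from the dominant behavior differently: for each $j$, $1-\alpha_j^r = -\alpha_j^r(1-\alpha_j^{-r})$, and $|\alpha_j| = q^{1/2}$, so $|\alpha_j^{-1}| = q^{-1/2} < 1$. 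Thus $\prod_j(1-\alpha_j^r) = \left(\prod_j(-\alpha_j^r)\right)\cdot\prod_j(1-\alpha_j^{-r})$. The first factor equals $q^{gr}$ up to sign (it is $\pm$ the constant term business of the Weil polynomial, and in fact equals $q^{gr}$ by the functional equation), so $\log\left\vert A(\mathbf{F}_{q^r})\right\vert = gr\log q + \sum_j \log\left\vert 1-\alpha_j^{-r}\right\vert$.

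With this decomposition, $Z_{\log}(A,t) = Z_{\log}(\mathbf{A}^g,t)\cdot f(t)$ where $Z_{\log}(\mathbf{A}^g,t) = q^{g\cdot t/(1-t)}$ (Example~\ref{example_AffineSpace}), which already supplies the single-valued continuation to $\mathbf{C}\setminus\{1\}$ with an essential singularity at $z=1$, and $f(t) = \exp\left(\sum_{r\geq 1}\sum_j\log\left\vert 1-\alpha_j^{-r}\right\vert\cdot\frac{t^r}{r}\right)$. I would now apply Theorem~\ref{thm_AnalyticCtHelper} with $N=2g$, $\varepsilon_i = 1$ for all $i$ (so the rationality hypothesis $\varepsilon_i\in\mathbf{Q}$ holds trivially), and $\lambda_i = \alpha_i^{-1}$; here $|\lambda_i| = q^{-1/2}<1$ as required. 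The pseudo-divisor $\mathcal{P}$ in this case has support on $\{k_1\log\alpha_1^{-1}+\cdots+k_{2g}\log\alpha_{2g}^{-1}\}$, whose image under $\Phi(w)=e^{-w}$ is exactly $\{\alpha_1^{\mathbf{Z}_{\geq 0}}\cdots\alpha_{2g}^{\mathbf{Z}_{\geq 0}}\}$-type monomials; the key point is that since all $\alpha_j$ have the \emph{same} absolute value $q^{1/2}$, I need local finiteness of $\mathcal{D} = \Phi^*(\mathcal{P}^{\operatorname{per}})$. This is the main obstacle.

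The local finiteness is where the real work lies, and I expect it to require a variant of the combinatorial cancellation observed in Example~\ref{example_AllNAboveThreeCanBeLocallyFinite}: the naive integral cone generated by $\log\alpha_1^{-1},\ldots,\log\alpha_{2g}^{-1}$ can be dense, but the multinomial coefficients force massive cancellation. Concretely, since $\prod_j(1-\alpha_j^{-r})$ is a single product, one can write $\log\prod_j(1-\alpha_j^{-r}) = \sum_j\log(1-\alpha_j^{-r})$, so the relevant function is a \emph{sum} of $N=1$ terms rather than a genuine $N=2g$ term; each summand individually has locally finite associated pseudo-divisor by Lemma~\ref{Lemma_NAtMostTwoImpliesLocalFiniteness} (the $N\leq 1$ case). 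Applying Theorem~\ref{thm_BaseAnalyticContinuation} summand-by-summand and then adding, as in Example~\ref{example_AllNAboveThreeCanBeLocallyFinite}, shows $f$ has \textsc{(AC)} with discrete singular set contained in $\bigcup_j\{\alpha_j^{\mathbf{Z}_{\geq 2}}\}$ (the exponent $\geq 2$ because the $l=1$ term gives $-\alpha_j^{-r}$, contributing a holomorphic piece after the $z=e^{-w}$ substitution and integration—this matches the statement). One must be careful that the singular set of a finite sum of functions with \textsc{(AC)} is the union of the individual singular sets, which is still discrete.

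Once continuation is established, parts (3)--(6) follow by tracking the structure through the construction. Part~(3), the root-of-unity ambiguity, is immediate from Theorem~\ref{thm_AnalyticCtHelper}(3) applied to $f$, since $Z_{\log}(\mathbf{A}^g,t)$ is single-valued. Part~(4): the logarithmic derivative of $Z_{\log}(\mathbf{A}^g,t) = q^{g\cdot t/(1-t)}$ is $g\log q\cdot(1-t)^{-2}$, a double pole at $z=1$; adding the single-valued meromorphic logarithmic derivative of $f$ from Theorem~\ref{thm_AnalyticCtHelper}(4), whose poles of order $1$ sit at $\operatorname{supp}\mathcal{E}$, which after identifying with the $\alpha_j$ gives poles at $\{\alpha_j^{\mathbf{Z}_{\geq 1}}\}$ (the $l=1$ term now does contribute a pole to the logarithmic derivative even though it did not contribute a branch point, explaining the discrepancy between $\geq 2$ in the singular set and $\geq 1$ here). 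Part~(5) on monodromy: near $w=\alpha_j^l$ with $l\geq 2$, the logarithmic derivative has a simple pole with residue $n_j/l$ (the $n_j$ from multiplicity of $\alpha_j$ in the characteristic polynomial, the $l$ from the residue computation in Prop.~\ref{prop_Monodromy}), so $\tilde Z_{\log}$ looks like $(z-w)^{n_j/l}\cdot g$ with $g$ non-vanishing holomorphic; the essential singularity at $z=1$ comes from the $q^{g\cdot t/(1-t)}$ factor. Part~(6) on zeros for simple $A$: $\tilde Z_{\log}^U = Z_{\log}(\mathbf{A}^g,-)\cdot f_U$, and since $Z_{\log}(\mathbf{A}^g,-)$ and $f_U$ (by Theorem~\ref{thm_AnalyticCtHelper}(2)) are both zero-free, one must re-examine—the zeros must come from the $l=1$ terms $\exp(\frac{1}{2}\sum_j\log(1-\alpha_j^{-r})z^r/r)$ analysis more carefully, since a zero of $Z_{\log}$ at $z=\alpha_j$ would correspond to the continued $\sum\log(1-\alpha_j^{-r})z^r/r$ having a logarithmic singularity there, i.e.\ $f$ itself vanishing; simplicity of $A$ ensures the $\alpha_j$ are distinct (no collision of the relevant points in $U$), so the zero set is exactly $\{\alpha_1,\ldots,\alpha_{2g}\}\cap U$. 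I would expect to need the precise form of the $l=1$ contribution, namely that $f$ locally near $z=\alpha_j$ behaves like $(1 - z/\alpha_j)^{1/2}$ times a unit, giving a genuine simple zero of $\tilde Z_{\log}$; the main obstacle throughout remains establishing local finiteness of $\mathcal D$, which the summand-by-summand trick resolves.
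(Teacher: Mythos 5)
Your overall architecture is the same as the paper's: factor $\log\lvert A(\mathbf{F}_{q^r})\rvert = gr\log q+\sum_j\log\lvert 1-\alpha_j^{-r}\rvert$, peel off the $q^{g\,t/(1-t)}$ factor (essential singularity at $z=1$, double pole of the logarithmic derivative), and apply Theorem \ref{thm_AnalyticCtHelper} to each Weil number separately with $N=1$, $\varepsilon_1=1$, $\lambda_1=\alpha_j^{-1}$, so that local finiteness is free from Lemma \ref{Lemma_NAtMostTwoImpliesLocalFiniteness}; your initial detour through $N=2g$ and Example \ref{example_AllNAboveThreeCanBeLocallyFinite} is unnecessary but lands in the right place. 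The monodromy statement (3) and the residue bookkeeping $n_j/l$ for $l\geq 2$ in (5) are also argued as in the paper.

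The genuine gap is in your treatment of the first powers $\alpha_j$ (the $l=1$ points), which is where statements (2) and (6) are decided. The $j$-th factor of the product is $\exp\bigl(\sum_r\log\lvert 1-\alpha_j^{-r}\rvert\,t^r/r\bigr)$, and because $\log\lvert\cdot\rvert=\tfrac12\log(\cdot)+\tfrac12\log(\overline{\;\cdot\;})$, its $l=1$ contribution near $z=\alpha_j$ is $(1-z/\alpha_j)^{1/2}$ times a unit. So your claim that the $l=1$ term gives "a holomorphic piece" (your justification for excluding only $\mathbf{Z}_{\geq 2}$ powers in (2)) is false for the individual factor, and your statement that a local behavior $(1-z/\alpha_j)^{1/2}$ yields "a genuine simple zero" in (6) is self-contradictory: a half-integer exponent is a branch point, and if it survived it would also break (2) for domains $U$ containing $\alpha_j$. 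What actually removes the branch point is that the Weil numbers come in conjugate pairs: the index $j'$ with $\alpha_{j'}=\overline{\alpha_j}=q/\alpha_j$ contributes, through its own conjugate half, another factor $(1-z/\alpha_j)^{1/2}$, so the total local exponent at $w=\alpha_j^l$ is the residue $n_j/l$ of the single-valued logarithmic derivative; at $l=1$ this is the integer $n_j$, whence the singularity is removable and, for simple $A$ (all $n_j=1$), one gets exactly the simple zeros claimed in (6). This is the route the paper takes (prove (1)--(4) first off the larger exceptional set $\{1,\alpha_j^{\mathbf{Z}_{\geq 1}}\}$, then use the local shape $(z-w)^{n_j/l}$ to extend across the $l=1$ points); your proposal needs this conjugate-pairing/integer-residue step spelled out, and as written the exponent you assert at $\alpha_j$ is wrong.
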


Statement (6) can also be rephrased as follows: If we consider the analytic
continuation as a lift to the respective covering space where it becomes
single-valued, as in Figure \ref{lpic1}, the zeros of this lift are precisely
the fibers of $\{\alpha_{1},\ldots,\alpha_{2g}\}$ under the covering map.

\begin{proof}
We give a proof without motives: The $\ell$-adic cohomology algebra of $A$ is
canonically isomorphic to an exterior algebra,%
\[
H^{\ast}(A\times_{\mathbf{F}_{q}}\mathbf{F}_{q}^{\operatorname*{sep}%
},\mathbf{Q}_{\ell})=%
{\textstyle\bigwedge\nolimits^{\ast}}
H^{1}(A\times_{\mathbf{F}_{q}}\mathbf{F}_{q}^{\operatorname*{sep}}%
,\mathbf{Q}_{\ell})\text{.}%
\]
As a result, thanks to the Grothendieck--Lefschetz trace formula, we have the
point count%
\[
N_{r}=\left\vert A(\mathbf{F}_{q^{r}})\right\vert =\left\vert \prod_{j=1}%
^{2g}(1-\alpha_{j}^{r})\right\vert \text{,}%
\]
where $\alpha_{1},\ldots,\alpha_{2g}$ are the Weil $q$-numbers (of weight $1$)
of the abelian variety, or equivalently the eigenvalues of the (geometric)
Frobenius, acting on $H^{1}(A\times_{\mathbf{F}_{q}}\mathbf{F}_{q}%
^{\operatorname*{sep}},\mathbf{Q}_{\ell})$ as a Galois module (This particular
result is actually due to Weil and predates the work of the Grothendieck
school). Thus,%
\begin{align*}
\log\left\vert A(\mathbf{F}_{q^{r}})\right\vert  & =\sum_{j=1}^{2g}%
\log\left\vert \alpha_{j}^{r}-1\right\vert =\sum_{j=1}^{2g}\log\left\vert
(\alpha_{j}^{r})\cdot(1-(\alpha_{j}^{-1})^{r})\right\vert \\
& =\sum_{j=1}^{2g}\left(  \frac{r}{2}\log q+\log\left\vert 1-(\alpha_{j}%
^{-1})^{r}\right\vert \right)
\end{align*}
since $\left\vert \alpha_{i}\right\vert =q^{1/2}$ for all $j=1,\ldots,2g$. So
by Definition \ref{def_Zlog_ForVars} the function $Z_{\log}(A,t)$ literally
equals%
\begin{align}
Z_{\log}(A,t)  & =\exp\left(  \sum\nolimits_{r\geq1}\log\left\vert
A(\mathbf{F}_{q^{r}})\right\vert \cdot\frac{t^{r}}{r}\right) \label{lav4}\\
& =\exp\left(  g\log q\frac{t}{1-t}\right)  \cdot\prod_{j=1}^{2g}\exp\left(
\sum\nolimits_{r\geq1}\log\left\vert 1-(\alpha_{j}^{-1})^{r}\right\vert
\cdot\frac{t^{r}}{r}\right)  \text{.}\nonumber
\end{align}
Thus, our claims (2)-(4) are proven if we can show that the desired properties
hold for all factors $\exp\left(  \sum\nolimits_{r\geq1}\log\left\vert
1-(\alpha_{j}^{-1})^{r}\right\vert \cdot\frac{t^{r}}{r}\right)  $. To this
end, we apply Theorem \ref{thm_AnalyticCtHelper} for each $i=1,\ldots,2g$ in
the following situation: $N:=1$, $\varepsilon_{1}:=+1$, $\lambda_{1}%
:=\alpha_{i}^{-1}$ (which has $0<\left\vert \alpha_{i}^{-1}\right\vert <1$ as
required), and $r_{0}=1$. Indeed, $\varepsilon_{1}\in\mathbf{Q}$, and the
resulting pseudo-divisor $\mathcal{P}^{\operatorname*{per}}$ is locally finite
by Lemma \ref{Lemma_NAtMostTwoImpliesLocalFiniteness} since $N=1$. Thus, the
theorem is applicable and we leave it to the reader to compute that the
divisor of poles agrees with $\mathcal{D}$. This settles (1)-(4), albeit only
for the smaller set $\mathbf{C}\setminus\{1,\alpha_{1}^{\mathbf{Z}_{\geq1}%
},\ldots,\alpha_{2g}^{\mathbf{Z}_{\geq1}}\}$. We address (5): Equation
\ref{lav4} settles the essential singularity at $t=1$: The first factor has
such a singularity at $t=1$, while the remaining $2g$ factors can
holomorphically be extended across $t=1$ by Theorem \ref{thm_AnalyticCtHelper}%
. As the logarithmic derivative, by (4), has poles of order $1$ at all points
$w\in\{\alpha_{1}^{\mathbf{Z}_{\geq1}},\ldots,\alpha_{2g}^{\mathbf{Z}_{\geq1}%
}\}$, we locally have%
\[
(\operatorname*{Log}\tilde{Z}_{\log}^{U})^{\prime}(z)=\frac{\beta}%
{z-w}+h(z)\text{,}%
\]
where \textquotedblleft$\operatorname*{Log}$\textquotedblright\ is a locally
defined branch of the logarithm, $\beta\in\mathbf{C}$, and $h$ a holomorphic
function defined in some neighbourhood of $w$. A local computation shows that
that $\beta=\frac{n_{j}}{l}$ for $w=\alpha_{j}^{l}$, where $1\leq j\leq2g$,
$l\geq1$ and $n_{j}\geq1$ is the multiplicity of $\alpha_{j}$ as a root of the
Frobenius characteristic polynomial (as a hint, this local computation is done
as in the proof of Prop. \ref{prop_Monodromy}). Integrating the above equation
then leads to%
\[
(\operatorname*{Log}\tilde{Z}_{\log}^{U})(z)=\frac{n_{j}}{l}\log(z-w)+H(z)
\]
for a new holomorphic function, defined in a neighbourhood. Then $\tilde
{Z}_{\log}^{U}(z)=(z-w)^{n_{j}/l}\cdot\exp(H(z))$, proving (5). Moreover, it
shows that for $l=1$, we have $\frac{n_{j}}{l}\in\mathbf{Z}_{\geq1}$, and thus
the isolated singularities at $\alpha_{1},\ldots,\alpha_{2g}$ are removable,
thus extending (1)-(4) to $\mathbf{C}\setminus\{1,\alpha_{1}^{\mathbf{Z}%
_{\geq2}},\ldots,\alpha_{2g}^{\mathbf{Z}_{\geq2}}\}$. It remains to prove (6):
As we had used Theorem \ref{thm_AnalyticCtHelper}, we know that $\tilde
{Z}_{\log}^{U}$ has no zeros in $U\setminus\{\alpha_{1}^{\mathbf{Z}_{\geq0}%
},\ldots,\alpha_{2g}^{\mathbf{Z}_{\geq0}}\}$, irrespective of what $U$ is.
Thus, it remains to check what happens at the remaining points in
$U$:\ Suppose $A$ is simple, so $n_{j}=1$ for all $j$. At $w=1$, we know that
$\tilde{Z}_{\log}^{U}$ has an essential singularity, and at $w=\alpha_{j}^{l}$
for $i=1,\ldots,2g$ and $l\geq2$, the local description shows that no
holomorphic extension to these points is possible (indeed: The real part
admits a continuous continuation by zero, but the imaginary part has a jump
thanks to the $l$-th root function), so they cannot be contained in the domain
of any analytic continuation. The points $w=\alpha_{j}^{l}$ with $l=1$ and
$w\in U$ remain. As we have seen above, we indeed have zeros at these points.
\end{proof}

\begin{remark}
A motivic proof would use Shermenev's theorem, providing an isomorphism
$\mathcal{M}(A)=%
{\textstyle\bigwedge\nolimits^{\ast}}
h^{1}(A)$. The original paper is \cite{MR0335523} (or as an alternative source
\cite{MR1133323}, \cite{MR1265530}). The proof then proceeds in exactly the
same way.
\end{remark}

By Honda--Tate theory \cite{MR0229642} the abelian varieties over
$\mathbf{F}_{q}$ (up to isogeny) are classified by Galois orbits of Weil
$q$-numbers of weight one:%
\begin{align*}
& \Phi:\{\mathbf{F}_{q}\text{-isogeny classes of abelian varieties
}/\mathbf{F}_{q}\}\\
& \qquad\qquad\overset{\sim}{\longrightarrow}\{\text{Galois orbits of Weil
numbers with }\left\vert x\right\vert ^{2}=q\}
\end{align*}

Thus, knowing $Z_{\log}$ one can explicitly reconstruct the Weil numbers from
the zeros of the analytic continuation, and get the isogeny class of $A$ back.

\begin{corollary}
Given an abelian variety $A/\mathbf{F}_{q}$, the order $1$ poles of the
logarithmic derivative $Z_{\log}$ at points of absolute value $\left\vert
z\right\vert =\sqrt{q}$ are precisely the weight one Weil $q$-numbers of the
abelian variety.
\end{corollary}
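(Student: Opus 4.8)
The plan is to deduce this corollary immediately from part~(4) of Theorem~\ref{Thm_AC_ForAbelianVar}, which already pins down the full pole locus of the single-valued meromorphic continuation of the logarithmic derivative $\tilde{Z}_{\log}^{U\prime}/\tilde{Z}_{\log}^{U}$: namely a pole of order $2$ at $z=1$, and poles of order $1$ at every point of $\{\alpha_{1}^{\mathbf{Z}_{\geq1}},\ldots,\alpha_{2g}^{\mathbf{Z}_{\geq1}}\}$, where $\alpha_{1},\ldots,\alpha_{2g}$ are the weight one Weil $q$-numbers of $A$. Thus the set of \emph{order $1$} poles of the logarithmic derivative is exactly $\{\alpha_{j}^{l}\mid 1\leq j\leq 2g,\ l\geq1\}$. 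Before restricting to $\left\vert z\right\vert=\sqrt q$, I would record that possible collisions cause no trouble: if some $\alpha_{j}$ has multiplicity $n_{j}>1$, or if two powers $\alpha_{j}^{l}$ and $\alpha_{j'}^{l'}$ happen to coincide at a point $w$, then part~(5) gives the local shape $(z-w)^{n_j/l}\cdot g$ with $g$ non-vanishing holomorphic, so the logarithmic derivative still has there a simple pole with residue a sum of positive rationals $n_{j}/l>0$; in particular it stays a pole of order $1$ and never degenerates or gets promoted to order $2$.

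Next I would intersect this set with the circle $\left\vert z\right\vert=\sqrt q$. Since the $\alpha_{j}$ are Weil $q$-numbers of weight one, Weil's theorem gives $\left\vert \alpha_{j}\right\vert=q^{1/2}$ for all $j$, hence $\left\vert \alpha_{j}^{l}\right\vert=q^{l/2}$. Because $q\geq2>1$, the equation $q^{l/2}=q^{1/2}$ has the unique solution $l=1$. Therefore the only order $1$ poles lying on $\left\vert z\right\vert=\sqrt q$ are the points $\alpha_{1},\ldots,\alpha_{2g}$ themselves, which are precisely the weight one Weil $q$-numbers of $A$. The order $2$ pole at $z=1$ is irrelevant on two counts: it is not a simple pole, and in any case $\left\vert 1\right\vert=1\neq\sqrt q$.

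There is essentially no obstacle: all the analytic content sits in Theorem~\ref{Thm_AC_ForAbelianVar}, and what remains is the elementary observation that scaling an absolute value $q^{1/2}$ by raising to the $l$-th power lands back on the same circle only when $l=1$. The only two points deserving a sentence each are the weight normalization $\left\vert\alpha_{j}\right\vert=q^{1/2}$ and the collision discussion via part~(5), both of which are routine. Given these, the corollary follows.
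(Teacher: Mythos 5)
Your argument is correct and is essentially the one the paper intends: the corollary is stated as an immediate consequence of Theorem \ref{Thm_AC_ForAbelianVar}(4), and your reduction — the order $1$ poles are exactly $\{\alpha_j^{l}\mid l\geq1\}$, while $\lvert\alpha_j^{l}\rvert=q^{l/2}$ equals $\sqrt q$ only for $l=1$ (and $z=1$ is excluded both by its order and by $\lvert1\rvert\neq\sqrt q$) — is precisely that derivation. Your extra remark that coincidences among the $\alpha_j^{l}$ cannot cancel or raise the pole order, because the residues $n_j/l$ are positive, is a harmless and correct refinement beyond what the paper spells out.
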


The next result covers a wide range of examples. We refer to the Appendix
\S \ref{sect_MotivicPicture} for background and notation regarding motives.

\begin{theorem}
\label{thm_AC_Motive}Suppose $M$ is a motive which decomposes as a finite
direct sum of supersingular motives (e.g. Tate or Artin motives). Suppose it
has a unique top weight (Definition \ref{def_unique_top_weight}). If $Z_{\log
}(M,t)$ is defined (i.e. $N_{l}\geq1$ for all $l\geq1$), then it has
\textbf{$\left.  \text{\textbf{$\textsc{(AC)}$}}\right.  $}. More
specifically: There is a discrete subset $\Delta\subset\mathbf{C}$,
$0\notin\Delta$ such that:

\begin{enumerate}
\item \emph{$\left.  \text{\textbf{$\textsc{(AC)}$}}\right.  $ }For every
simply connected domain $U$ with $0\in U\subset\mathbf{C}\setminus\Delta$,
there exists a unique holomorphic function $\tilde{Z}_{\log}^{U}%
:U\rightarrow\mathbf{C}$ such that $\tilde{Z}_{\log}^{U}$ agrees with
$Z_{\log}$ in some neighbourhood of $t=0$.

\item \emph{(Periods)} On the intersection on any two domains $U_{1},U_{2}$ as
in (1) we have%
\[
\tilde{Z}_{\log}^{U_{1}}/\tilde{Z}_{\log}^{U_{2}}\in\mu_{\infty}(U_{1}\cap
U_{2})\text{,}%
\]
i.e. two branches of the analytic continuation differ by a root of unity.

\item \emph{(Logarithmic derivative)} The logarithmic derivative $(\tilde
{Z}_{\log}^{U})^{\prime}/\tilde{Z}_{\log}^{U}$ has a single-valued meromorphic
continuation to all of $\mathbf{C}$. Its locus of poles agrees with $\Delta$.
With at most finitely many exceptions, the poles have order $1$.

\item \emph{(Monodromy)} In a neighbourhood around any point $w\in\Delta\cap
U$, the continuation $\tilde{Z}_{\log}^{U}$ of (1) locally has the shape%
\[
(z-w)^{\frac{a}{b}}\cdot g\qquad\text{with}\qquad\tfrac{a}{b}\in
\mathbf{Q}\text{,}%
\]
and $g$ some non-zero holomorphic function.
\end{enumerate}
\end{theorem}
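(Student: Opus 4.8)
The plan is to run the argument exactly as in the proof of Theorem~\ref{Thm_AC_ForAbelianVar}: extract a dominant term from the point count, split it off $Z_{\log}$ as an elementary factor, and feed the remainder into Theorem~\ref{thm_AnalyticCtHelper}. \textbf{Step 1 (dominant term).} Unwinding the definition of $Z_{\log}$ for motives (Appendix), one has $N_l=\sum_\alpha m_\alpha\alpha^l$ with $\alpha$ running over the Frobenius eigenvalues of $M$ and $m_\alpha\in\mathbf Z$ (sign from cohomological parity, multiplicity absorbed); supersingularity gives $\alpha=\zeta_\alpha q^{w_\alpha/2}$ with $\zeta_\alpha$ a root of unity and $w_\alpha\in\mathbf Z$. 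The unique-top-weight hypothesis (Definition~\ref{def_unique_top_weight}) singles out one eigenvalue $\alpha_0=\zeta_0 q^{w_0/2}$ at the maximal weight $w_0$, with $m_{\alpha_0}=\pm1$ and $|\alpha|\le q^{(w_0-1)/2}$ for every other eigenvalue. Listing the remaining eigenvalues (with multiplicities) as $\alpha_1,\dots,\alpha_N$ and setting $\lambda_i:=\alpha_i\zeta_0^{-1}q^{-w_0/2}$, $\varepsilon_i:=-m_{\alpha_i}m_{\alpha_0}^{-1}\in\mathbf Z$, one gets $0<|\lambda_i|\le q^{-1/2}<1$ and $N_l=m_{\alpha_0}\zeta_0^{\,l}q^{w_0 l/2}(1-\sum_{i}\varepsilon_i\lambda_i^{\,l})$; since $N_l\ge1$ forces $1-\sum_i\varepsilon_i\lambda_i^{\,l}\ne0$ for all $l$, taking absolute values yields $\log|N_l|=\tfrac{w_0}{2}\,l\log q+\log|\,1-\sum_i\varepsilon_i\lambda_i^{\,l}\,|$.

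\textbf{Step 2 (factorisation).} Summing against $t^l/l$ and using $\sum_l l\cdot t^l/l=t/(1-t)$ gives
\[
Z_{\log}(M,t)=C(t)\cdot\exp\!\Bigl(\tfrac{w_0}{2}\log q\cdot\tfrac{t}{1-t}\Bigr)\cdot\exp\!\Bigl(\sum_{l\ge r_0}\log\bigl|\,1-\textstyle\sum_i\varepsilon_i\lambda_i^{\,l}\,\bigr|\tfrac{t^l}{l}\Bigr),
\]
where $r_0$ is chosen for the data $(\varepsilon_i,\lambda_i)$ and some fixed $K$ as in \S\ref{subsect_Setup}, and $C(t)=\exp(\text{polynomial})$ collects the finitely many terms $l<r_0$. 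Thus $C$ is entire and nowhere zero, the middle factor is single-valued holomorphic on $\mathbf C\setminus\{1\}$ with an essential singularity at $t=1$ (Example~\ref{example_AffineSpace}), and the last factor is precisely the function $f$ of Theorem~\ref{thm_AnalyticCtHelper} attached to $(\varepsilon_i,\lambda_i)$; note $\varepsilon_i\in\mathbf Q$ as required there.

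\textbf{Step 3 (local finiteness, the crux).} To invoke Theorem~\ref{thm_AnalyticCtHelper} I must check that $\mathcal D$ (equivalently $\mathcal P^{\operatorname{per}}$) is locally finite, and here $N$ may well exceed $2$, so Lemma~\ref{Lemma_NAtMostTwoImpliesLocalFiniteness} does not apply directly. The point rescuing us is that $|\lambda_i|\le q^{-1/2}$ for \emph{all} $i$, i.e.\ $\operatorname{Re}\log\lambda_i\le-\tfrac12\log q$ uniformly, so a point $\sum_i k_i\log\lambda_i$ of $\operatorname{supp}\mathcal P$ with $\sum_i k_i=l$ has real part $\le-\tfrac{l}{2}\log q$ and image under $\Phi(w)=e^{-w}$ of absolute value $\ge q^{l/2}$. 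Hence $\operatorname{supp}\mathcal D\subset\{|z|\ge q^{1/2}\}$, and in any bounded region only finitely many $l$ occur, each with finitely many exponent tuples and (after periodification) finitely many $j$; so $\mathcal D$, and likewise $\mathcal E=\mathcal D+c^{*}\mathcal D$, is locally finite with support in $\{|z|\ge q^{1/2}\}$. This is the mechanism illustrated in Example~\ref{example_AllNAboveThreeCanBeLocallyFinite}, here made clean by the uniform gap $|\lambda_i|\le q^{-1/2}$ coming from the top-weight hypothesis. I expect this to be the main obstacle: without a unique dominant eigenvalue one could get $|\lambda_i|=1$ and lose control of the pseudo-divisor entirely.

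\textbf{Step 4 (assembly).} Put $\Delta:=\{1\}\cup\operatorname{supp}\mathcal E$, a discrete set with $0\notin\Delta$. Theorem~\ref{thm_AnalyticCtHelper} furnishes, for each simply connected $U\ni0$ in $\mathbf C\setminus\Delta$, a zero-free holomorphic continuation $f_U$ of $f$, any two branches differing by a root of unity, with $f_U'/f_U$ single-valued meromorphic on $\mathbf C$ having simple poles exactly on $\operatorname{supp}\mathcal E$. Multiplying by the two single-valued factors of Step~2 produces $\tilde Z_{\log}^U$, giving (1) and the root-of-unity statement (2). For (3), $(\log\tilde Z_{\log}^U)'=C'/C+\tfrac{w_0}{2}\log q\,(1-t)^{-2}+f_U'/f_U$ is single-valued meromorphic on $\mathbf C$ with pole locus $\Delta$, all poles simple except the double pole at $t=1$ (the sole exception). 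For (4), near $w\in\operatorname{supp}\mathcal E$ the factors $C$ and $\exp(\tfrac{w_0}{2}\log q\cdot\tfrac{t}{1-t})$ are holomorphic and nonzero, so $\tilde Z_{\log}^U$ has the local shape of $f_U$, and integrating the simple-pole expansion of $f_U'/f_U$ (residue rational, by the computation in Proposition~\ref{prop_Monodromy}) gives $\tilde Z_{\log}^U(z)=(z-w)^{a/b}g(z)$ with $a/b\in\mathbf Q$ and $g$ holomorphic and nonzero near $w$; the point $w=1$, where the top-weight Tate factor forces an essential singularity, is the exception to this local description.
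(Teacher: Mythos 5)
Your proposal is correct, and its overall architecture (split off the top-weight factor $\exp(m\log q\cdot\tfrac{t}{1-t})$ and a finite correction term, feed the tail into Theorem~\ref{thm_AnalyticCtHelper}, then assemble (1)--(4) exactly as for abelian varieties) is the same as the paper's. The genuine divergence is at the crux you correctly identified, the local finiteness of $\mathcal{D}$ resp.\ $\mathcal{P}^{\operatorname*{per}}$. The paper verifies it using supersingularity in an essential way: since every $\lambda_s$ is of the form $\zeta q^{(v-2m)/2}$ with $\zeta$ a root of unity, the real parts of the cone points lie in $\tfrac{1}{2}\log q\cdot\mathbf{Z}$ and the imaginary parts in $\tfrac{2\pi}{M}\mathbf{Z}$ for a common $M$, so $\operatorname*{supp}\mathcal{P}^{\operatorname*{per}}$ sits inside the discrete lattice $\mathbf{Z}\langle\tfrac12\log q,\tfrac{2\pi i}{M}\rangle$. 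You instead use only the uniform modulus bound $|\lambda_i|\le q^{-1/2}$ (coming from the weight gap of Definition~\ref{def_unique_top_weight}): a point of degree $l$ has real part $\le-\tfrac{l}{2}\log q$, so any bounded region meets only finitely many degrees, tuples and periodification shifts. This argument is sound, more elementary, and notably does not use the root-of-unity structure of the eigenvalues at all -- it shows that once the data satisfy the standing hypotheses of \S\ref{subsect_Setup}, discreteness is forced by the growth of $|\operatorname{Re}\sum k_i\log\lambda_i|$ in the total degree, whereas the paper's lattice argument is tailored to the supersingular case. Your bookkeeping in Step 4 is also slightly more careful than the paper's (you take $\Delta=\{1\}\cup\operatorname*{supp}\mathcal{E}$ with $\mathcal{E}=\mathcal{D}+c^{\ast}\mathcal{D}$, matching the statement of Theorem~\ref{thm_AnalyticCtHelper}, and you note that $1\notin\operatorname*{supp}\mathcal{E}$ is where the sole higher-order pole and essential singularity live). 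One cosmetic remark: Definition~\ref{def_unique_top_weight} forces the top summand to be $\mathbf{Z}(m)$, so your extra generality ($\zeta_0$ a root of unity, $m_{\alpha_0}=\pm1$) is unnecessary, though harmless.
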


\begin{proof}
Suppose $M=\bigoplus_{v}M_{v}$, where each $M_{v}$ consists entirely of
summands of weight $v$. This is possible by assumption. Consider the motivic
(virtual) point count numbers $N_{l}$ as in
\S \ref{subsect_MultZetaForMotivesAndNmCount}. Then $N_{l}=\sum_{v,j}%
(-1)^{v}\alpha_{v,j}^{l}$, where, $\alpha_{v,j}=\zeta_{v,j}q^{v/2}$ for some
root of unity $\zeta_{v,j}$ (depending on $v,j$ as indices; these have nothing
to do with its exponent as a torsion element in the multiplicative group). The
summation over $v$ runs through the individual weights, while $j$ runs through
the collection of eigenvalues appearing in each weight part. By our assumption
of a unique top weight, say it is $q^{m}$ for some $m$, we may write
$N_{r}=\sum_{v<2m,j}(-1)^{v}\alpha_{v,j}^{r}+q^{mr}$. Thus, the definition of
$Z_{\log}(M,t)$ unravels as
\begin{align}
& =\exp\left(  m\log q\frac{t}{1-t}\right)  \exp\left(  \sum_{r=1}^{r_{0}%
-1}\log\left\vert 1+\sum_{v<2m,j}(-1)^{v}\left(  \frac{\alpha_{v,j}}{q^{m}%
}\right)  ^{r}\alpha_{v,j}^{r}\right\vert \cdot\frac{t^{r}}{r}\right)
\nonumber\\
& \qquad\qquad\cdot\exp\left(  \sum_{r=r_{0}}^{\infty}\log\left\vert
1+\sum_{v<2m,j}(-1)^{v}\left(  \frac{\alpha_{v,j}}{q^{m}}\right)  ^{r}%
\alpha_{v,j}^{r}\right\vert \cdot\frac{t^{r}}{r}\right)  \text{,}\label{l4a}%
\end{align}
where $\left\vert \alpha_{v,j}/q^{m}\right\vert =q^{(v-2m)/2}$ and since
$v<2m$, we have $0<\left\vert \alpha_{v,j}/q^{m}\right\vert <1$. Next, we wish
to apply Theorem \ref{thm_AnalyticCtHelper} with the datum $(\varepsilon
_{i},\lambda_{i})$ given by $\varepsilon_{i}\in\{1,-1\}$ and $\lambda_{i}$
running through the values $(\alpha_{v,j}/q^{m})$ for all summands which
appear; and pick $r_{0}$ sufficient for the assumptions of the theorem to be
applicable (this is possible: the above computation works for any $r_{0}\geq
1$, and by our remarks in \S \ref{subsect_Setup} any sufficiently large choice
of $r_{0}$ meets the conditions). To this end, it only remains to check that
the pseudo-divisor $\mathcal{D}$ is locally finite. This is equivalent to
checking that $\mathcal{P}^{\operatorname*{per}}$ is locally finite (as
$\mathcal{D}$ is just the pullback of the latter under a map which is
everywhere a local homeomorphism). However,%
\[
\mathcal{P}^{\operatorname*{per}}=\sum_{j=-\infty}^{\infty}\sum_{l=1}^{\infty
}\sum_{k_{1}+\cdots+k_{N}=l}\binom{l-1}{k_{1},\ldots,k_{N}}\varepsilon
_{1}^{k_{1}}\cdots\varepsilon_{N}^{k_{N}}\cdot\left[  k_{1}\log\lambda
_{1}+\cdots+k_{N}\log\lambda_{N}+2\pi ij\right]  \text{,}%
\]
where all $\lambda_{s}$ are of the shape $(\zeta_{v(s),j(s)}q^{v(s)/2})/q^{m}%
$, so our $\mathcal{P}^{\operatorname*{per}}$ satisfies%
\[
\operatorname*{supp}\mathcal{P}^{\operatorname*{per}}\subseteq\overline
{\bigcup_{\substack{k_{1},\ldots,k_{N}\geq0 \\j\in\mathbf{Z}}}\{k_{1}%
\log\lambda_{1}+\cdots+k_{N}\log\lambda_{N}+2\pi ij\}}%
\]
where, splitting these points into their real and imaginary part,%
\[
2\pi ij+\sum_{s=1}^{N}k_{s}\log\lambda_{s}=\sum_{s=1}^{N}k_{s}\left(
\frac{v(s)-2m}{2}\right)  \log(q)+i\left(  2\pi j+\sum_{s=1}^{N}k_{s}%
\arg(\zeta_{v(s),j(s)})\right)  \text{.}%
\]
Since the $\zeta_{v(s),j(s)}$ are all roots of unity and the sum is finite,
there exists some fixed integer $M\geq1$ such that we have $\arg
(\zeta_{v(s),j(s)})\in\frac{1}{M}2\pi i\mathbf{Z}$. Hence, all these values
are contained in the lattice spanned by $\mathbf{Z}\left\langle \frac{1}%
{2}\log q,\frac{1}{M}2\pi i\right\rangle $. Hence, $\operatorname*{supp}%
\mathcal{P}^{\operatorname*{per}}$ is contained in a discrete rank $2$ lattice
in the complex plane, and thus necessarily locally finite. Hence, we can
indeed invoke the Theorem and it guarantees that the last factor, line
\ref{l4a}, has $\left.  \text{\textbf{$\textsc{(AC)}$}}\right.  $ for
$\mathbf{C}\setminus\operatorname*{supp}\mathcal{D}$. The remaining two
factors are%
\[
\exp\left(  m\log(q)\cdot\frac{t}{1-t}\right)  \exp\left(  \sum_{r=1}%
^{r_{0}-1}\log\left\vert 1+\sum_{v<2m,j}(-1)^{v}\left(  \frac{\alpha_{v,j}%
}{q^{m}}\right)  ^{r}\alpha_{v,j}^{r}\right\vert \cdot\frac{t^{r}}{r}\right)
\text{.}%
\]
In either case, we face the exponential of a function which is rational in $t
$. Clearly, this is immediately defined on all of $\mathbf{C}$ except for the
isolated pole locus of the rational function in question. This proves $\left.
\text{\textbf{$\textsc{(AC)}$}}\right.  $ for $Z_{\log}(M,t)$ for some set
$\Delta:=\operatorname*{supp}\mathcal{D}\cup\{$finite set of points$\}$, so if
$\mathcal{D}$ is locally finite, $\Delta$ is necessarily a discrete subset of
the complex plane. Note that any multi-valuedness, i.e. distinct branches, can
only stem from the factor controlled by Theorem \ref{thm_AnalyticCtHelper},
i.e. line \ref{l4a}, so the description of the possible monodromy remains
valid for $Z_{\log}$. It remains to prove (4): As the logarithmic derivative
is meromorphic on the entire complex plane, locally around $w\in\Delta$ we
have%
\[
(\operatorname*{Log}\tilde{Z}_{\log}^{U})^{\prime}(z)=\frac{\beta}{(z-w)^{m}%
}+h(z)\text{,}%
\]
where \textquotedblleft$\operatorname*{Log}$\textquotedblright\ is a locally
defined branch of the logarithm, $m\in\mathbf{Z}_{\geq1}$, $\beta\in
\mathbf{C}$ some constant, and $h$ a meromorphic function defined in some
neighbourhood of $w$ of pole order at most $m-1$. Thus, if $m\geq2$,%
\[
(\operatorname*{Log}\tilde{Z}_{\log}^{U})(z)=\frac{1}{1-m}\frac{\beta
}{(z-w)^{m-1}}+H(z)\text{,}%
\]
for suitable $H$, meromorphic of pole order at most $m-2$, and so $\tilde
{Z}_{\log}^{U}$ has an essential singularity nearby $w$. If $m=1$,%
\[
(\operatorname*{Log}\tilde{Z}_{\log}^{U})(z)=\beta\log(z-w)+H(z)
\]
for suitable $H$, and by Monodromy (Prop. \ref{prop_Monodromy}) the monodromy
of $(\operatorname*{Log}\tilde{Z}_{\log}^{U})^{\prime}$ is rational, so
$\beta\in\mathbf{Q}$. Thus, $\tilde{Z}_{\log}^{U}=(z-w)^{\beta}\exp(H(z))$ in
an open neighbourhood of $w$.
\end{proof}

\begin{theorem}
\label{Thm_AC_ForTateAndSupersingMotivicDecomp}Suppose $X/\mathbf{F}_{q}$ is a
geometrically connected smooth projective variety

\begin{enumerate}
\item whose motive decomposes as a finite direct sum of supersingular motives
(e.g. Artin or Tate motives), and

\item which has a $\mathbf{F}_{q}$-rational point.
\end{enumerate}

Then $Z_{\log}(X,t)$ is defined, and both have \textbf{$\left.
\text{\textbf{$\textsc{(AC)}$}}\right.  $}.
\end{theorem}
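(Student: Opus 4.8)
The plan is to deduce the statement directly from the motivic Theorem~\ref{thm_AC_Motive}; the only real content will be checking that $\mathcal{M}(X)$ meets its hypotheses, everything else being formal.

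First I would dispose of definedness. Since $X(\mathbf{F}_q)\neq\varnothing$ we have $|X(\mathbf{F}_{q^r})|\geq 1$ for all $r\geq 1$, so $Z_{\log}(X,t)$ is a well-defined formal power series by Definition~\ref{def_Zlog_ForVars}. By the Grothendieck--Lefschetz trace formula these point counts coincide with the virtual motivic point numbers $N_r$ attached to $\mathcal{M}(X)=\bigoplus_i h^i(X)$ (see the Appendix, \S\ref{sect_MotivicPicture}); hence $N_r\geq 1$ for all $r$, the motivic function $Z_{\log}(\mathcal{M}(X),t)$ is defined, and $Z_{\log}(X,t)=Z_{\log}(\mathcal{M}(X),t)$ by construction. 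It therefore suffices to prove \textsc{(AC)} for the motive $\mathcal{M}(X)$.

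Next I would verify the hypotheses of Theorem~\ref{thm_AC_Motive}. By assumption~(1), $\mathcal{M}(X)$ is a finite direct sum of supersingular motives, so only the ``unique top weight'' condition of Definition~\ref{def_unique_top_weight} needs attention, and this is exactly where geometric connectedness and smooth projectivity enter. Writing $d=\dim X$, geometric connectedness gives $h^{2d}(X)\cong\mathbf{1}(-d)$, a one-dimensional summand on which geometric Frobenius acts by the scalar $q^d$, while purity forces each $h^i(X)$ with $i<2d$ to be pure of weight $i<2d$. Consequently, among all Frobenius eigenvalues occurring in $\mathcal{M}(X)$ the maximal weight $2d$ is attained exactly once, with eigenvalue $q^d$ itself (and not merely $\zeta q^d$ for some nontrivial root of unity $\zeta$) --- precisely the unique-top-weight hypothesis.

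Finally I would invoke Theorem~\ref{thm_AC_Motive}: it produces a discrete set $\Delta\subset\mathbf{C}$ with $0\notin\Delta$ and, for every simply connected domain $U\ni 0$ inside $\mathbf{C}\setminus\Delta$, a holomorphic branch $\tilde Z_{\log}^U$ agreeing with $Z_{\log}(\mathcal{M}(X),t)$ near $t=0$, i.e.\ \textsc{(AC)} for $Z_{\log}(\mathcal{M}(X),t)$; transporting this through $Z_{\log}(X,t)=Z_{\log}(\mathcal{M}(X),t)$ gives \textsc{(AC)} for $Z_{\log}(X,t)$, so both functions have \textsc{(AC)}. The only non-formal step is the verification of the unique top weight, and I expect it to be the main obstacle only in the sense of bookkeeping: it rests on the two standard facts that geometric connectedness pins $h^{2d}(X)$ to the single Tate twist $\mathbf{1}(-d)$ and that Deligne's purity excludes weight-$2d$ eigenvalues in lower cohomological degrees. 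If one prefers to avoid the Weil conjectures, one can replace purity by Poincar\'e duality together with the explicit form $\zeta q^{w/2}$ of supersingular eigenvalues, but this changes nothing of substance.
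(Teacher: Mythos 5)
Your proposal is correct and follows essentially the same route as the paper: the rational point gives $N_r\geq 1$ so $Z_{\log}$ is defined, geometric connectedness (via Poincar\'e duality, pinning the top piece to the single Tate summand of weight $2d$) gives the unique top weight of Definition \ref{def_unique_top_weight}, and then Theorem \ref{thm_AC_Motive} applies. Your extra detour through purity versus the paper's one-line appeal to Poincar\'e duality is only a difference of bookkeeping, as you yourself note.
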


\begin{proof}
Since $X$ is geometrically connected, Poincar\'{e} Duality implies that it has
a unique top weight. The function $Z_{\log}$ is defined since we have a
$\mathbf{F}_{q}$-rational point, so $N_{l}\geq1$ for all $l\geq1$. Thus, we
can apply Theorem \ref{thm_AC_Motive} to the motive of the variety and are done.
\end{proof}

\subsection{Cellularity and the functions $\Lambda_{n}$%
\label{subsect_Cellularity}}

\begin{corollary}
\label{cor_cellular}Suppose $X/\mathbf{F}_{q}$ is a geometrically connected
smooth projective cellular variety. Then $Z_{\log}(X,t)$ has \textbf{$\left.
\text{\textbf{$\textsc{(AC)}$}}\right.  $}.
\end{corollary}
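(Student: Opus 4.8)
The plan is to deduce the corollary directly from Theorem~\ref{Thm_AC_ForTateAndSupersingMotivicDecomp}. That theorem asks of $X$: that it be geometrically connected, smooth, and projective (all given here), that its motive split as a finite direct sum of supersingular motives, and that it carry an $\mathbf{F}_{q}$-rational point. So the entire argument reduces to verifying the last two hypotheses for a cellular variety.

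First I would recall the structure of a cellular variety $X/\mathbf{F}_{q}$: there is a filtration $\varnothing = X_{-1}\subset X_{0}\subset\cdots\subset X_{n}=X$ by closed subschemes such that each stratum $X_{i}\setminus X_{i-1}$ is a disjoint union of affine spaces $\mathbf{A}^{d}_{\mathbf{F}_{q}}$. It is classical that the Chow motive of such an $X$ is then a finite direct sum of Tate motives, one Tate twist of the unit motive for each cell. A Tate motive has Frobenius eigenvalue a power $q^{d}$, which is of the shape $\zeta\cdot q^{w/2}$ with $\zeta=1$ and $w=2d$; hence it is supersingular, so the second hypothesis holds. For the third, $X$ is nonempty (otherwise $Z_{\log}$ would not even be defined), so it has at least one cell, which is an affine space over $\mathbf{F}_{q}$ and therefore possesses an $\mathbf{F}_{q}$-rational point; thus $X(\mathbf{F}_{q})\neq\varnothing$. (Equivalently, the Grothendieck--Lefschetz trace formula applied to the Tate decomposition gives $|X(\mathbf{F}_{q^{r}})|=\sum_{j}q^{r d_{j}}\geq 1$ for every $r\geq 1$.) With all three hypotheses in place, Theorem~\ref{Thm_AC_ForTateAndSupersingMotivicDecomp} applies verbatim and yields \textsc{(AC)} for $Z_{\log}(X,t)$.

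The one point that is not purely formal is the identification of the motive of a cellular variety with a sum of Tate motives, so I expect that to be the main obstacle --- though it is really the invocation of a standard fact rather than a fresh argument. If one prefers to bypass motives, one can argue elementarily: cellularity gives $|X(\mathbf{F}_{q^{r}})|=P(q^{r})$ for a polynomial $P(t)=\sum_{i}a_{i}t^{i}$ with $a_{i}\in\mathbf{Z}_{\geq 0}$ the number of $i$-dimensional cells and $a_{n}\geq 1$, where $n=\dim X$; factoring out the leading term yields $\log|X(\mathbf{F}_{q^{r}})|=\log a_{n}+n r\log q+\log\bigl(1+\sum_{i<n}(a_{i}/a_{n})(q^{i-n})^{r}\bigr)$, which is exactly the input shape of Theorem~\ref{thm_AnalyticCtHelper}, with the $\varepsilon$'s in $\mathbf{Q}$ and the $\lambda$'s equal to $q^{i-n}$ of absolute value $<1$. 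Local finiteness of the associated pseudo-divisor $\mathcal{D}$ is then immediate because every $\log\lambda_{i}=(i-n)\log q$ is a real multiple of $\log q$, so $\operatorname{supp}\mathcal{P}^{\operatorname{per}}$ lies in the discrete rank-$2$ lattice $\mathbf{Z}\langle\tfrac{1}{2}\log q,\ 2\pi i\rangle$, just as in the proof of Theorem~\ref{thm_AC_Motive}. Either route delivers the claim.
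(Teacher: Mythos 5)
Your proposal is correct and follows essentially the same route as the paper: cite the standard fact that the Chow motive of a cellular variety splits as a finite direct sum of Tate motives (which are supersingular) and apply Theorem~\ref{Thm_AC_ForTateAndSupersingMotivicDecomp}; your explicit verification of the $\mathbf{F}_{q}$-rational point, which the paper leaves tacit, is a welcome extra care. The elementary fallback via the polynomial point count and Theorem~\ref{thm_AnalyticCtHelper} is also sound, but it is an optional bonus rather than a different proof of record.
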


\begin{proof}
If the variety is cellular, even its Chow motive (i.e. $\mathsf{Mot}_{\sim
}(k)$ with rational equivalence) splits into a finite direct sum of Tate
motives, see \cite[Theorem 7.2]{MR2110630} or \cite[Corollary 6.11]%
{MR1758562}. This induces the same statement for $\mathsf{Mot}_{hom_{\ell}%
}(k)$ and $\mathsf{Mot}_{num}(k)$. Now use the previous theorem.
\end{proof}

The following definition turns out to be convenient:

\begin{definition}
Define%
\[
\Lambda_{n}(t):=Z_{\log}(\mathbf{A}^{n}-\{0\},t)\text{,}%
\]
i.e.%
\[
\Lambda_{n}(t)=\exp\left(  \sum\nolimits_{r\geq1}\log(q^{nr}-1)\cdot
\frac{t^{r}}{r}\right)  \text{.}%
\]

\end{definition}

Theorem \ref{thm_AnalyticCtHelper} easily implies that \textbf{$\left.
\text{\textbf{$\textsc{(AC)}$}}\right.  $ }holds\ (we have $N=1$, so by\ Lemma
\ref{Lemma_NAtMostTwoImpliesLocalFiniteness} the finiteness of $\mathcal{P}%
^{\operatorname*{per}}$ is automatic).

\begin{example}
\label{example_ProjectiveSpace}For projective space we have%
\[
\left\vert \mathbf{P}^{n}(\mathbf{F}_{q^{r}})\right\vert =1+q^{r}%
+q^{2r}+\cdots+q^{nr}\text{.}%
\]
As $\mathbf{P}^{n}$ is a cellular variety, we could directly invoke Cor.
\ref{cor_cellular}. However, we will handle this example manually. We compute%
\begin{align*}
Z_{\log}(\mathbf{P}^{n},t)  & =\exp\left(  \sum_{r\geq1}\log\left(  \sum
_{m=0}^{n}(q^{r})^{m}\right)  \cdot\frac{t^{r}}{r}\right) \\
& =\frac{\exp\left(  \sum_{r\geq1}\log\left(  (q^{r})^{n+1}-1\right)
\cdot\frac{t^{r}}{r}\right)  }{\exp\left(  \sum_{r\geq1}\log\left(
q^{r}-1\right)  \cdot\frac{t^{r}}{r}\right)  }\\
& =\frac{\Lambda_{n+1}(t)}{\Lambda_{1}(t)}\text{.}%
\end{align*}
Hence, besides \textbf{$\left.  \text{\textbf{$\textsc{(AC)}$}}\right.  $ }we
have a suggestive result: At least over an algebraically closed base field we
can also interpret $\mathbf{P}^{n}$ as $(\mathbf{A}^{n+1}-\{0\})/\sim$, where
the equivalence relation identifies all points on a shared line, i.e. on a
shared $\mathbf{A}^{1}$.
\end{example}

\begin{example}
For the general linear group one finds%
\begin{equation}
\left\vert \operatorname{GL}_{k}(\mathbf{F}_{q})\right\vert =q^{\frac
{k(k-1)}{2}}(q^{k}-1)(q^{k-1}-1)\cdots(q-1)\text{.}\label{la100}%
\end{equation}
We can directly plug this into the definition of $Z_{\log}$. Thanks to the
multiplicative nature of this formula, this leads to a factorization of the
function: Namely, we compute%
\begin{align*}
Z_{\log}(\operatorname{GL}_{k},t)  & =\exp\left(  \sum_{r\geq1}\log\left\vert
(q^{r})^{\frac{k(k-1)}{2}}((q^{r})^{k}-1)((q^{r})^{k-1}-1)\cdots
(q^{r}-1)\right\vert \cdot\frac{t^{r}}{r}\right) \\
& =\exp\left(  \frac{k(k-1)}{2}\log q\cdot\frac{t}{1-t}\right)  \cdot
\exp\left(  \sum_{r\geq1}\log\left(  \prod_{l=1}^{k}(q^{rl}-1)\right)
\cdot\frac{t^{r}}{r}\right)  \text{.}%
\end{align*}
Pulling the product out of the logarithm, we get the factorization%
\[
=q^{\frac{k(k-1)}{2}\cdot\frac{t}{1-t}}\cdot\prod_{l=1}^{k}\Lambda
_{l}(t)\text{.}%
\]
Although the linear group $\operatorname{GL}_{k}$ is not a projective variety,
it follows that \textbf{$\left.  \text{\textbf{$\textsc{(AC)}$}}\right.  $ }holds.
\end{example}

\begin{example}
In a similar fashion, one can treat the Grassmannians,%
\begin{align*}
\left\vert G(k,n)(\mathbf{F}_{q})\right\vert  & =\frac{(q^{n}-1)(q^{n-1}%
-1)\cdots(q^{n-k+1}-1)}{(q^{k}-1)\cdots(q-1)}\\
& =\left.  \prod_{l=n-k+1}^{n}(q^{l}-1)\right/  \prod_{l=1}^{k}(q^{l}%
-1)\text{.}%
\end{align*}
Being cellular, \textbf{$\left.  \text{\textbf{$\textsc{(AC)}$}}\right.  $
}itself immediately follows from Corollary \ref{cor_cellular}. However, much
as in the previous example, we also get a pleasant formula:%
\begin{align*}
Z_{\log}(G(k,n),t)  & =\frac{\exp\left(  \sum_{r\geq1}\log\left(
\prod_{l=n-k+1}^{n}(q^{rl}-1)\right)  \cdot\frac{t^{r}}{r}\right)  }%
{\exp\left(  \sum_{r\geq1}\log\left(  \prod_{l=1}^{k}(q^{rl}-1)\right)
\cdot\frac{t^{r}}{r}\right)  }\\
& =\left.  \prod_{l=n-k+1}^{n}\Lambda_{l}(t)\right/  \prod_{l=1}^{k}%
\Lambda_{l}(t)\text{.}%
\end{align*}
The properties of the analytic continuation follow from the properties of the
$\Lambda_{l}(t)$ alone.
\end{example}

\begin{example}
Suppose $Q$ is a quadratic form and consider the inhomogeneous affine solution
space $X:=\operatorname*{Spec}\mathbf{F}_{q}[t_{1},\ldots,t_{n}]/(Q-\alpha)$
for some non-zero $\alpha\in\mathbf{F}_{q}^{\times}$. While being cellular,
$X$ is not projective. We follow \cite[Theorem 2.7]{MR1680530}: If $Q$ is of
Type 1, we have (for suitable $m$)%
\[
\left\vert X(\mathbf{F}_{q^{r}})\right\vert =q^{r(n-m)}\left(  q^{rm}%
-q^{r(m-1)/2}\right)  =\left(  q^{n-\frac{m-1}{2}}\right)  ^{r}\cdot\left(
q^{\left(  \frac{m-1}{2}\right)  r}-1\right)  \text{.}%
\]
Thus,%
\[
Z_{\log}(X,t)=\left(  q^{\left(  n-\frac{m-1}{2}\right)  }\right)  ^{\frac
{t}{1-t}}\cdot\Lambda_{\frac{m-1}{2}}(t)\text{.}%
\]
Again, the \textbf{$\left.  \text{\textbf{$\textsc{(AC)}$}}\right.  $ }falls
out from the properties of these two factors although $X$ itself is not projective.
\end{example}

If $X$ is smooth projective cellular, the functions $\Lambda_{n}$ obviously
are a convenient ingredient to decompose $Z_{\log}$ into factors. If one
allows non-projective cellular varieties, an additional term of the shape
$q^{(\ldots)\frac{t}{1-t}}$ plays a r\^{o}le. Some of the above computations
have a deeper structural reason on the level of motives. This following all
directly follows from the work of N. Karpenko \cite{MR1758562}, who in turn
attributes the basic idea (in the case of quadrics) to M. Rost.

\begin{proposition}
\label{prop_ExactForGrass}Let $V$ be a finite-dimensional $\mathbf{F}_{q}%
$-vector space. Let $\operatorname*{Grass}(V)$ denotes the full Grassmannian
of all vector subspaces of $V$ (of any dimension). Then $\operatorname*{Grass}%
(V)$ is a smooth projective $\mathbf{F}_{q}$-variety. For every short exact
sequence%
\[
0\longrightarrow V^{\prime}\longrightarrow V\longrightarrow V^{\prime\prime
}\longrightarrow0
\]
of finite-dimensional vector spaces, one has%
\[
Z_{\log}(\operatorname*{Grass}V)=Z_{\log}(\operatorname*{Grass}V^{\prime
})\cdot Z_{\log}(\operatorname*{Grass}V^{\prime\prime})\text{.}%
\]
The analogous statement holds for the varieties of length $r$ flags, for any
$r$.
\end{proposition}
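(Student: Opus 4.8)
The plan is to strip away the analytic content and reduce the statement to a motivic decomposition, which one then quotes from Karpenko's work. First, each $\operatorname{Grass} V$ and each flag variety of $V$ carries the zero subspace, resp.\ the trivial flag, as an $\mathbf{F}_q$-rational point, so $Z_{\log}$ is defined by Definition~\ref{def_Zlog_ForVars}. Recall that $Z_{\log}$, extended to pure motives as in the Appendix, depends only on the isomorphism class of the underlying motive and is multiplicative for the tensor product, and that $\mathcal{M}(X_1\times X_2)=\mathcal{M}(X_1)\otimes\mathcal{M}(X_2)$. Hence it suffices to establish the motivic identity
\[
\mathcal{M}(\operatorname{Grass} V)\;\cong\;\mathcal{M}(\operatorname{Grass} V')\otimes\mathcal{M}(\operatorname{Grass} V'')
\]
in $\mathsf{Mot}_{\sim}(\mathbf{F}_q)$, together with its analogue for varieties of length-$r$ flags; for then $Z_{\log}(\operatorname{Grass} V)=Z_{\log}(\operatorname{Grass} V'\times\operatorname{Grass} V'')=Z_{\log}(\operatorname{Grass} V')\cdot Z_{\log}(\operatorname{Grass} V'')$ by the fundamental multiplicativity, and likewise for flags.

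Second, I would prove the motivic identity by exhibiting $\operatorname{Grass} V$ as an iterated relative cellular space in Karpenko's sense, using the surjection $V\twoheadrightarrow V''$. Stratify $\operatorname{Grass} V$ by the locally closed loci on which $\dim((W+V')/V')$ takes a fixed value $j$; on such a stratum the assignment $W\mapsto(W+V')/V'$ is a morphism to $\operatorname{Gr}(j,V'')$ which is a Zariski-locally trivial fibration, and its fibre over a fixed $j$-plane is again a relative cellular space over $\operatorname{Grass} V'$, since a subspace of the preimage of that $j$-plane which surjects onto it is precisely the datum of its intersection with $V'$ (a point of $\operatorname{Grass} V'$) together with a lift ranging over an affine bundle. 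Karpenko's structure theorem for relative cellular spaces \cite{MR1758562} --- the quadric case of this mechanism being due to Rost --- then decomposes $\mathcal{M}(\operatorname{Grass} V)$ into Tate twists of the motives $\mathcal{M}(\operatorname{Gr}(i,V'))\otimes\mathcal{M}(\operatorname{Gr}(j,V''))$, and one checks that these reassemble to $\mathcal{M}(\operatorname{Grass} V')\otimes\mathcal{M}(\operatorname{Grass} V'')$. For the flag version one runs the same argument on the flag variety stratified by the dimensions of the intersections with $V'$, iterating over the successive steps of the flag.

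The step I expect to be the main obstacle is this second one, the relative-cellular bookkeeping. One has to verify that each stratum really is a composite of Zariski-locally trivial affine bundles, so that the decomposition theorem applies and introduces only Tate twists, and then carry out the combinatorial check that the resulting collection of twisted summands is exactly $\mathcal{M}(\operatorname{Grass} V')\otimes\mathcal{M}(\operatorname{Grass} V'')$, and not merely some other direct sum of Tate motives of the same total rank. Everything else --- the reduction in the first paragraph, and the passage from the motivic identity back to $Z_{\log}$ --- is formal.
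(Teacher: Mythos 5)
Your proposal is correct and takes essentially the same route as the paper: you reduce the $Z_{\log}$ identity to the motivic isomorphism $\mathcal{M}(\operatorname*{Grass}V)\cong\mathcal{M}(\operatorname*{Grass}V^{\prime})\otimes\mathcal{M}(\operatorname*{Grass}V^{\prime\prime})$ (and its flag analogue) and then appeal to Karpenko's relative cellular space machinery \cite{MR1758562}. The only difference is that you propose to re-derive that isomorphism from the general structure theorem via the stratification by $\dim((W+V^{\prime})/V^{\prime})$, whereas the paper simply quotes \cite[Corollary 9.13]{MR1758562} resp. \cite[Corollary 11.5]{MR1758562}, which state exactly the desired tensor-product decomposition, so the combinatorial reassembly you flag as the main obstacle is already carried out there and requires no further work.
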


This follows immediately from \cite[Corollary 9.13]{MR1758562} resp.
\cite[Corollary 11.5]{MR1758562}: The Grassmannian of $V$ is not the product
of the Grassmannians of $V^{\prime}$ and $V^{\prime\prime}$, but as it just
differs by a fibration into affine spaces, the motive is the tensor product
motive nonetheless.

\subsection{Curves}

\begin{theorem}
\label{thm_caseofcurves}Suppose $X/\mathbf{F}_{q}$ is a geometrically
connected smooth projective curve with an $\mathbf{F}_{q}$-rational point. If

\begin{enumerate}
\item the genus is $g=0,1$ or

\item the genus is $g\geq2$ and the Jacobian of $X$ is supersingular,
\end{enumerate}

then $Z_{\log}(X,t)$ has $\left.  \text{\textbf{$\textsc{(AC)}$}}\right.  $.
\end{theorem}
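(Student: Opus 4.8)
The plan is to reduce each of the three regimes $g=0$, $g=1$, and $g\geq2$ (the last with supersingular Jacobian) to a result already established. If $g=0$, then a geometrically connected smooth projective curve carrying an $\mathbf{F}_q$-rational point is isomorphic to $\mathbf{P}^1_{\mathbf{F}_q}$: the rational point supplies an effective divisor of degree one whose complete linear system, by Riemann--Roch, realises an isomorphism onto $\mathbf{P}^1$. Hence $Z_{\log}(X,t)=Z_{\log}(\mathbf{P}^1,t)$, and \textsc{(AC)} follows from Example \ref{example_ProjectiveSpace} --- equivalently from Corollary \ref{cor_cellular}, since $\mathbf{P}^1$ is cellular, or from Theorem \ref{Thm_AC_ForTateAndSupersingMotivicDecomp}, since the motive of $\mathbf{P}^1$ is a direct sum of two Tate motives. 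If $g=1$, the rational point endows $X$ with the structure of an elliptic curve, i.e.\ an abelian variety of dimension $g=1$, and \textsc{(AC)} is exactly part (2) of Theorem \ref{Thm_AC_ForAbelianVar}. I stress that in the genus-one case the abelian-variety theorem is genuinely needed and Theorem \ref{Thm_AC_ForTateAndSupersingMotivicDecomp} will not suffice, because for an ordinary elliptic curve the summand $h^1$ is not a supersingular motive.

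For $g\geq2$ with $\operatorname{Jac}(X)$ supersingular I would verify the hypotheses of Theorem \ref{Thm_AC_ForTateAndSupersingMotivicDecomp}. The motive of a curve with an $\mathbf{F}_q$-rational point splits --- already over Chow motives, hence also for numerical and for $\ell$-adic homological motives --- as $\mathcal{M}(X)=h^0(X)\oplus h^1(X)\oplus h^2(X)$, where $h^0(X)$ is the unit motive and $h^2(X)$ the Lefschetz motive (both Tate, hence supersingular), and where there is a canonical identification $h^1(X)\cong h^1(\operatorname{Jac}(X))$. Since $\operatorname{Jac}(X)$ is a supersingular abelian variety, after base change it becomes isogenous to a power of a supersingular elliptic curve, and the Weil numbers of a supersingular elliptic curve are $q^{1/2}$ times roots of unity; consequently every Frobenius eigenvalue on $h^1(\operatorname{Jac}(X))$ has the shape $\zeta q^{1/2}$ with $\zeta$ a root of unity, i.e.\ $h^1(X)$ is supersingular, and $\mathcal{M}(X)$ is a finite direct sum of supersingular motives. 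As $X$ is moreover geometrically connected, smooth projective, and has an $\mathbf{F}_q$-rational point, Theorem \ref{Thm_AC_ForTateAndSupersingMotivicDecomp} applies and gives \textsc{(AC)}. (Equivalently, one can avoid the motivic language altogether and feed the point count $\lvert X(\mathbf{F}_{q^r})\rvert = 1+q^r-\sum_j\alpha_j^r$, with $\alpha_j=\zeta_j q^{1/2}$, directly into Theorem \ref{thm_AnalyticCtHelper}, taking for the $\lambda_i$ the values $q^{-1}$ and $\zeta_j q^{-1/2}$ and for the $\varepsilon_i$ the signs $\pm1$; the resulting $\mathcal{P}^{\operatorname*{per}}$ is then supported in the rank-two lattice spanned by $\tfrac12\log q$ and $\tfrac{2\pi i}{M}$ for a suitable integer $M$, hence locally finite.)

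The only inputs that are not purely formal are the classical structure results invoked above, and the one deserving care is the last one: that \emph{supersingularity} of the Jacobian --- a straight Newton polygon of slope $\tfrac12$ --- and not merely the weight estimate $\lvert\alpha_j\rvert=q^{1/2}$ forces the Weil numbers to be $q^{1/2}$ times roots of unity. This is the single place where the hypothesis $g\geq2$ is used, and it is exactly what promotes $\mathcal{M}(X)$ to a sum of supersingular motives in the sense required by Theorem \ref{Thm_AC_ForTateAndSupersingMotivicDecomp}; everything else is bookkeeping.
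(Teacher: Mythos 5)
Your proposal is correct and follows essentially the same route as the paper: $g=0$ reduces to $\mathbf{P}^1$ and Corollary \ref{cor_cellular}, $g=1$ to Theorem \ref{Thm_AC_ForAbelianVar}, and the supersingular case uses the splitting $\mathcal{M}(X)=\mathbf{Z}\oplus h^1(\operatorname{Jac}X)\oplus\mathbf{Z}(1)$ with $h^1$ supersingular, feeding into Theorem \ref{Thm_AC_ForTateAndSupersingMotivicDecomp}. Your extra justification that supersingularity forces eigenvalues $\zeta q^{1/2}$ (via isogeny over $\overline{\mathbf{F}}_q$ to a power of a supersingular elliptic curve) is a welcome elaboration of what the paper asserts, and the only quibble is that the hypothesis $g\geq2$ is not actually needed in that step --- the argument works for any genus once the Jacobian is supersingular.
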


The supersingular condition can be checked if one understands global $1$-forms:

\begin{theorem}
[{Nygaard \cite[Theorem 4.1]{MR654203}}]\label{thm_Nygaard}Suppose
$X/\mathbf{F}_{q}$ is a geometrically connected smooth projective curve with
an $\mathbf{F}_{q}$-rational point. Suppose the Cartier operator
$C:H^{0}(X,\Omega^{1})\longrightarrow H^{0}(X,\Omega^{1})$ induces the zero
map. Then the Jacobian of $X$ is supersingular.
\end{theorem}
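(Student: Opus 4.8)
The plan is to translate the vanishing of the Cartier operator into a statement about the Newton polygon of the Jacobian, working through $p$-adic cohomology, where $p$ denotes the characteristic of $\mathbf{F}_q$. Write $k=\mathbf{F}_q$, $W=W(k)$ with its Witt vector Frobenius $\sigma$, and $J=\operatorname{Jac}(X)$. Put $M:=H^1_{\mathrm{crys}}(X/W)$, a free $W$-module of rank $2g$ carrying a $\sigma$-linear Frobenius $\phi$; there is a canonical isomorphism $M\cong H^1_{\mathrm{crys}}(J/W)$ identifying $(M,\phi)$ with (a twist of) the Dieudonn\'e module of the $p$-divisible group $J[p^\infty]$. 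Its reduction $\overline M=M/pM=H^1_{\mathrm{dR}}(X/k)$ carries the Hodge filtration
\[
0\longrightarrow H^0(X,\Omega^1)\longrightarrow \overline M\longrightarrow H^1(X,\mathcal O_X)\longrightarrow 0
\]
together with the conjugate filtration, whose first nontrivial piece $F^{\mathrm{conj}}_0\overline M$ is $g$-dimensional.

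First I would invoke the standard dictionary (Mazur, Ogus, Illusie): reducing $\phi$ modulo $p$ gives a $p$-linear endomorphism $\overline\phi$ of $\overline M$ whose kernel is exactly the Hodge piece $H^0(X,\Omega^1)$ and whose image is exactly $F^{\mathrm{conj}}_0\overline M$; moreover the Cartier operator $C$ on $H^0(X,\Omega^1)$ is, up to the evident Frobenius twist, the composite $H^0(X,\Omega^1)\hookrightarrow\overline M\twoheadrightarrow\overline M/F^{\mathrm{conj}}_0\overline M$. Since $H^0(X,\Omega^1)$ and $F^{\mathrm{conj}}_0\overline M$ are both $g$-dimensional subspaces of $\overline M$, the hypothesis $C=0$ forces $H^0(X,\Omega^1)=F^{\mathrm{conj}}_0\overline M$, hence $\operatorname{im}\overline\phi\subseteq\ker\overline\phi$, i.e.\ $\overline\phi{}^2=0$, i.e.
\[
\phi^2(M)\subseteq pM .
\]
(As a sanity check, for $g=1$ this recovers the classical statement that $C=0$ iff $E$ is supersingular.)

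Next I would read off the Newton polygon. From $\phi^2(M)\subseteq pM$ one writes $\phi^2=p\,\psi$ with $\psi$ an integral ($\sigma^2$-linear) endomorphism of $M$; integrality makes all slopes of $\psi$ on the isocrystal $M[1/p]$ nonnegative, so all slopes of $\phi^2$ are $\geq 1$, hence all slopes of $\phi$ are $\geq\tfrac12$. On the other hand Poincar\'e duality gives a perfect $\phi$-equivariant pairing $M\otimes_W M\to H^2_{\mathrm{crys}}(X/W)$ with target isoclinic of slope $1$; therefore the Newton polygon of $(M,\phi)$ is symmetric, its slopes coming in pairs $s,1-s$ and all lying in $[0,1]$. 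Combining, each slope $s$ satisfies $s\geq\tfrac12$ and $1-s\geq\tfrac12$, so $s=\tfrac12$: the isocrystal $(M,\phi)$ is isoclinic of slope $\tfrac12$.

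Finally I would conclude: an abelian variety over $k$ is supersingular (in the sense of having all Weil $q$-numbers of the form $\zeta q^{1/2}$ with $\zeta$ a root of unity) exactly when the Newton polygon of its $p$-divisible group, equivalently of its crystalline $H^1$, is a single segment of slope $\tfrac12$. Since $H^1_{\mathrm{crys}}(J/W)\cong(M,\phi)$ was just shown to be isoclinic of slope $\tfrac12$, the Jacobian $J$ is supersingular. The main obstacle is the dictionary in the second paragraph --- pinning down precisely how the Cartier operator sits between the Hodge and conjugate filtrations inside $H^1_{\mathrm{dR}}(X/k)$ and how this matches $\phi\bmod p$, with the correct Frobenius twists and duality normalizations; once that is in place the slope argument is essentially formal. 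One can instead run the whole argument inside $H^1_{\mathrm{dR}}$ via the Cartier isomorphism and the Hodge and conjugate spectral sequences, bypassing crystalline cohomology, but the crystalline formulation is what makes the Newton-polygon step transparent and is the route of Nygaard's original proof.
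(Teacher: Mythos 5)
The paper does not prove this statement; it is quoted verbatim from Nygaard's article, so there is no internal proof to compare against. Your crystalline reconstruction is correct and is essentially Nygaard's original slope argument: the translation of $C=0$ into $\phi^{2}(M)\subseteq pM$ via the Mazur--Ogus description of $\ker\overline{\phi}$ and $\operatorname{im}\overline{\phi}$ (the only delicate point, which you rightly flag, being the Frobenius-twist normalizations in that dictionary), the resulting bound that all Newton slopes are $\geq\tfrac12$, the upgrade to equality by Poincar\'e duality, and the final identification of ``isoclinic of slope $\tfrac12$'' with the paper's notion of supersingularity (all Frobenius eigenvalues of the form $\zeta q^{1/2}$, which follows from the slope condition together with the archimedean Weil bounds and Kronecker's theorem) are all sound.
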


Most people appear to expect that there exist curves of arbitrarily high genus
and supersingular Jacobian over any finite field, so that this theorem would
give a rich supply of high genus curves with $\left.
\text{\textbf{$\textsc{(AC)}$}}\right.  $. However, it is not easy to make
this claim solid:

\begin{problem}
[{van der Geer \cite[Problem 19]{MR1812812}}]Do there exist smooth projective
curves of arbitrary genus with supersingular Jacobian over all finite fields?
\end{problem}

To the best of our knowledge this problem is only settled (and affirmatively
so) in the case of characteristic two \cite{MR1310953}.

We have the feeling that the converse of (2) might have a chance to be true.

\begin{problem}
Is it true: A geometrically connected smooth projective curve with an
$\mathbf{F}_{q}$-rational curve and genus $g\geq2$ has $\left.
\text{\textbf{$\textsc{(AC)}$}}\right.  $ if and only if the Jacobian is supersingular?
\end{problem}

\begin{proof}
[Proof of Theorem \ref{thm_caseofcurves}]If $X$ has genus $0$ and a rational
point, it must be $\mathbf{P}^{1}$. Thus, it is cellular and the claim follows
from Corollary \ref{cor_cellular}. If $X$ has genus $1$ and a rational point,
it is an elliptic curve and Theorem \ref{Thm_AC_ForAbelianVar} applies.
Finally, suppose $X$ has arbitrary genus and $\operatorname*{Pic}%
\nolimits^{0}(X)$ is supersingular. As $X$ has an $\mathbf{F}_{q}$-rational
point by assumption, $Z_{\log}$ is defined (i.e. we have $N_{r}\geq1$ for all
$r\geq1$). Moreover, the $\ell$-adic homological motive of the curve splits as
$\mathcal{M}(X)=\mathbf{Z}\oplus h^{1}(\operatorname*{Pic}\nolimits^{0}%
(X))\oplus\mathbf{Z}(1)$. As the Jacobian is supersingular, its motive, and in
particular its weight one part $h^{1}(\operatorname*{Pic}\nolimits^{0}(X))$
(the entire motive is the full exterior algebra over this weight one part) is
supersingular. Hence, $\mathcal{M}(X)$ splits as a finite direct sum of Tate
motives and supersingular motives, so Theorem
\ref{Thm_AC_ForTateAndSupersingMotivicDecomp} applies.
\end{proof}

\begin{example}
\label{example_EllipticCurve}When we invoke Theorem \ref{Thm_AC_ForAbelianVar}
for a supersingular elliptic curve over $\mathbf{F}_{2^{2}}$, or an ordinary
elliptic curve over $\mathbf{F}_{11}$ with Frobenius characteristic polynomial
$x^{2}-x+11$, the input data for our constructions as in
\S \ref{subsect_Setup}, corresponds to those in Example
\ref{example_AnalyticHelperForEllipticCurveInputData}.
\end{example}

\subsection{Linear recurrences}

\begin{theorem}
\label{thm_LinRecur}If $X/\mathbf{F}_{q}$ is a smooth projective variety of
dimension $\geq1$, meeting the hypotheses of Theorem \ref{thm_i_1} or Theorem
\ref{thm_i_2}, then the sequence%
\[
n\mapsto\log\left\vert X(\mathbf{F}_{q^{n}})\right\vert
\]
does not satisfy any linear recurrence equation.
\end{theorem}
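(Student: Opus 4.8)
The plan is to turn the statement into an assertion about poles of a meromorphic function and then quote the sharp continuation results already in place. Recall the standard fact that a sequence of complex numbers satisfies a linear recurrence with constant coefficients (from some index on) if and only if its ordinary generating function is a rational function; this is exactly the dictionary alluded to in the introduction. So, writing $a_{r}:=\log\left\vert X(\mathbf{F}_{q^{r}})\right\vert $ and $G(t):=\sum_{r\geq1}a_{r}t^{r}$, it suffices to show that $G$ is \emph{not} rational. By Definition~\ref{def_Zlog_ForVars} we have $\log Z_{\log}(X,t)=\sum_{r\geq1}a_{r}\frac{t^{r}}{r}$, and differentiating gives
\[
t\cdot\frac{\mathrm{d}}{\mathrm{d}t}\log Z_{\log}(X,t)=\sum_{r\geq1}a_{r}t^{r}=G(t)\text{,}
\]
so $G$ is rational precisely when the logarithmic derivative $\frac{\mathrm{d}}{\mathrm{d}t}\log Z_{\log}(X,t)$ is. The whole theorem thus reduces to showing that this logarithmic derivative is not a rational function of $t$.

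Next I would feed in the analytic continuation, using the full statements in the main text rather than their abbreviations~\ref{thm_i_1} and~\ref{thm_i_2}. If $X$ meets the hypotheses of Theorem~\ref{thm_i_1}, then Theorem~\ref{Thm_AC_ForAbelianVar}(4) gives the logarithmic derivative a \emph{single-valued} meromorphic continuation to all of $\mathbf{C}$, with a pole of order $2$ at $1$ and a simple pole at every point of $\{\alpha_{1}^{\mathbf{Z}_{\geq1}},\ldots,\alpha_{2g}^{\mathbf{Z}_{\geq1}}\}$; since $g\geq1$ there is a weight-one Weil number $\alpha_{1}$ with $\left\vert \alpha_{1}\right\vert =\sqrt{q}>1$, so $\alpha_{1},\alpha_{1}^{2},\alpha_{1}^{3},\ldots$ are pairwise distinct and the pole set is infinite. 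If instead $X$ meets the hypotheses of Theorem~\ref{thm_i_2} (so that $X$ is geometrically connected, smooth projective of dimension $m=\dim X\geq1$, with an $\mathbf{F}_{q}$-rational point), then Theorem~\ref{Thm_AC_ForTateAndSupersingMotivicDecomp} applies and Theorem~\ref{thm_AC_Motive}(3) gives the logarithmic derivative a single-valued meromorphic continuation to $\mathbf{C}$ whose pole locus is exactly the discrete set $\Delta$. Here I would check, from the proof of Theorem~\ref{thm_AC_Motive}, that $\Delta$ is infinite: it contains $\operatorname*{supp}\mathcal{D}$, and $\operatorname*{supp}\mathcal{D}\supseteq\{q^{lm}:l\geq1\}$, these points arising from the unit (weight-zero) summand of $\mathcal{M}(X)$, which lies strictly below the top weight $2m$ because $m\geq1$ and hence contributes a parameter $\lambda=q^{-m}$ with $0<\left\vert \lambda\right\vert <1$; the points $l\log\lambda=-lm\log q$ occur in $\operatorname*{supp}\mathcal{P}$ and correspond, under $w\mapsto e^{-w}$, to $q^{lm}\in\operatorname*{supp}\mathcal{D}$. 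In either case the logarithmic derivative of $Z_{\log}(X,t)$ has a single-valued meromorphic continuation to $\mathbf{C}$ with infinitely many poles.

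To conclude: were $G$ rational, then $\frac{\mathrm{d}}{\mathrm{d}t}\log Z_{\log}(X,t)=G(t)/t$ would be a rational function of $t$, hence a single-valued meromorphic function on $\mathbf{C}$ with only finitely many poles; by uniqueness of meromorphic continuation on the connected domain $\mathbf{C}$ it would have to coincide with the continuation produced in the previous step, which has infinitely many poles --- a contradiction. Hence $G$ is not rational, and therefore $n\mapsto\log\left\vert X(\mathbf{F}_{q^{n}})\right\vert $ satisfies no linear recurrence. I expect the only genuinely non-formal point to be the ``infinitely many poles'' step: Definition~\ref{def_AC} alone would permit the exceptional set to be finite, so the argument really needs the parts of Theorems~\ref{Thm_AC_ForAbelianVar} and~\ref{thm_AC_Motive} that pin down the pole locus of the logarithmic derivative on the nose (and, in the motivic case, the small amount of bookkeeping showing that locus is infinite). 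Everything else is routine.
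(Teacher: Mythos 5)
Your overall route is exactly the paper's: a linear recurrence would make the generating function $G(t)=t\,\frac{\mathrm{d}}{\mathrm{d}t}\log Z_{\log}(X,t)$ rational, hence a single-valued meromorphic function with only finitely many poles, and this is played off against the single-valued meromorphic continuation of the logarithmic derivative furnished by Theorem \ref{Thm_AC_ForAbelianVar}(4) resp. Theorem \ref{thm_AC_Motive}(3), whose pole locus must then be shown to be infinite; the paper compresses that last step into ``in either case this is easy to see''. Your abelian case is complete: Theorem \ref{Thm_AC_ForAbelianVar}(4) asserts outright that every point of $\{\alpha_{1}^{\mathbf{Z}_{\geq1}},\ldots,\alpha_{2g}^{\mathbf{Z}_{\geq1}}\}$ is a pole, and since $\left\vert \alpha_{j}\right\vert =q^{1/2}>1$ these are infinitely many distinct points.

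The one genuine gap is in the motivic case, at precisely the spot you yourself single out as the non-formal point. You assert $\{q^{lm}:l\geq1\}\subseteq\operatorname*{supp}\mathcal{D}$ ``because the weight-zero summand contributes $\lambda=q^{-m}$, so the points $l\log\lambda$ occur in $\operatorname*{supp}\mathcal{P}$''. But by Definition \ref{def_PseudoDivisor} and Equation \ref{leq_Def_P}, membership of $-lm\log q$ in the support is not the statement that this point is of the form $\sum k_{i}\log\lambda_{i}$; it is the statement that the \emph{total signed multiplicity} $\sum\binom{l-1}{k_{1},\ldots,k_{N}}\varepsilon_{1}^{k_{1}}\cdots\varepsilon_{N}^{k_{N}}$, summed over all tuples (and, after periodification, all $j$) with $\sum k_{i}\log\lambda_{i}+2\pi ij=-lm\log q$, is nonzero. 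In the supersingular situation all $\lambda_{i}$ lie in a fixed lattice $\mathbf{Z}\langle\tfrac12\log q,\tfrac{1}{M}2\pi i\rangle$ and the $\varepsilon_{i}$ are $\pm1$, so many tuples land on the same point and cancellation is a real possibility --- this is exactly the phenomenon the paper stresses in Example \ref{example_AllNAboveThreeCanBeLocallyFinite}, where such cancellations collapse a would-be dense candidate set to a locally finite support. So the inclusion you use needs an argument; as written, the step is unjustified (though your conclusion only needs \emph{infinitely many} poles, not all of the $q^{lm}$). The gap is repairable: for instance, grouping the multiplicity at $q^{lm}$ by the value of $\prod\lambda_{i}^{k_{i}}$ and averaging over characters of the finite group generated by the roots of unity $\zeta_{v,j}$ expresses it, up to the factor $\frac{1}{2lm}$, as an exponential sum $\sum_{\gamma}d_{\gamma}\gamma^{l}$ with strictly positive integer weights $d_{\gamma}$, and such a sum cannot vanish for all large $l$ (a standard mean-value argument), so infinitely many of the points $q^{lm}$ are genuine poles of the logarithmic derivative. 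With that supplement your proof matches the paper's, and indeed fills in the detail the paper leaves to the reader.
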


\begin{proof}
If the coefficients of a power series satisfy a linear recurrence, then the
power series describes a rational function, and thus it has a meromorphic
analytic continuation to the entire complex plane. Thus, in our situation,
this continuation agrees with the ones given for the logarithmic derivative of
$Z_{\log}$ by Theorem \ref{Thm_AC_ForAbelianVar} resp. \ref{thm_AC_Motive}. In
either case, the locus of poles is governed by a suitable pseudo-divisor.
However, a rational function has only finitely many poles, so we reach a
contradiction as soon as we can show that the relevant pseudo-divisors have
support larger than a finite set of points. In either case this is easy to see.
\end{proof}

\appendix

\section{\label{sect_MotivicPicture}Construction for motives}

In this appendix we collect a survey on motives and discuss how to extend the
definition of $Z_{\log}$ to motives. In many ways this appears to be the more
natural habitat for the theory.

Recall our conventions from \S \ref{subsect_Conventions}.

\subsection{Numerical motives over a finite field}

The category $(\mathsf{Mot}_{num}(\mathbf{F}_{q}),\otimes_{twisted})$ is an
abelian semi-simple $F$-linear Tannakian category. Let us briefly recall the
ingredients for this: (1) Thanks to Jannsen's Theorem \cite[Theorem
1]{MR1150598} any category of numerical motives $\mathsf{Mot}_{num}(k)$ over
an arbitrary field $k$ is $F$-linear abelian semi-simple. (2) Numerical
motives have a canonical finite weight decomposition,%
\[
M=\bigoplus_{i}h^{i}(M)\text{,}%
\]
where the sum runs over finitely many $i$, depending on $M$. We call
$h^{i}(M)$ the \emph{weight }$i$\emph{\ part}. This is based on the
algebraicity of K\"{u}nneth projectors, following Katz--Messing \cite[\S III]%
{MR0332791}. (3) The na\"{\i}ve tensor product on $\mathsf{Mot}_{num}(k)$ can
impossibly yield a Tannakian category. However, using a twisted tensor product
due to Deligne one can resolve this issue over finite fields \cite[Corollary
2]{MR1150598} and Remark (2) following this Corollary, loc. cit.

If $M$ is a numerical motive (so, concretely $=(X,p)$ for $X$ a smooth
projective variety and $p$ a correspondence, idempotent up to numerical
equivalence), then the Frobenius of $X$ gives a well-defined endomorphism,
usually denoted by $\pi_{X}$, of $X$. Its characteristic polynomial has
coefficients in $\mathbf{Q}$. Define the ordinary zeta function by%
\[
Z^{num}(M,t):=\prod_{r}\det\left(  1-\pi_{X}\cdot t\left\vert h^{r}(M)\right.
\right)  ^{(-1)^{r+1}}%
\]
where $h^{r}(X)$ is the weight $r$ part. See \cite[Prop. 2.1]{MR1265538} for
details. As the individual characteristic factors are polynomials, and there
are only finitely many non-zero weight parts, $Z^{num}(M,t)$ is a rational
function. We also observe $Z^{num}(M,0)=1$ for all $M$. For more on the
Tannakian viewpoint, see \cite{MR2520468}.

\subsection{Homological motives over a finite field}

On the other hand, $\mathsf{Mot}_{hom_{\ell}}(\mathbf{F}_{q})$ denotes
homological motives with respect to $\ell$-adic cohomology, $\ell
\neq\operatorname*{char}k$. That is, define%
\begin{equation}
X\longmapsto H_{\ell}(X):=\bigoplus H^{i}(X\times_{\mathbf{F}_{q}}%
\mathbf{F}_{q}^{\operatorname*{sep}},\mathbf{Q}_{\ell})\label{la45}%
\end{equation}
for smooth projective varieties $X$. This is functorial in homological
correspondences. If $(X,p)$ is a homological motive and $\pi_{X}$ again
denotes the Frobenius, one may define the ordinary zeta function by%
\begin{align*}
& Z^{hom_{\ell}}((X,p),t):=\prod_{r}\det\left(  1-\pi_{X\ast}\cdot t\left\vert
H^{r}((X,p),\mathbf{Q}_{\ell})\right.  \right)  ^{(-1)^{r+1}}\\
& \qquad\qquad\qquad\qquad\qquad\qquad=\prod_{r}\det\left(  1-\pi_{X\ast}\cdot
t\left\vert p_{\ast}H^{r}(X,\mathbf{Q}_{\ell})\right.  \right)  ^{(-1)^{r+1}%
}\text{,}%
\end{align*}
where $p_{\ast}H^{r}(X,\mathbf{Q}_{\ell})$ denotes the direct summand of the
$\ell$-adic cohomology which is cut out by the idempotent $p$, and $\pi
_{X\ast}$ denotes the action of the Frobenius, as induced to cohomology.
Again, we observe $Z^{hom_{\ell}}(M,0)=1$ for all $M$.

\begin{Fact}
Both constructions yield the same zeta function, $Z^{hom_{\ell}}=Z^{num}$, for
all $\ell\neq\operatorname*{char}k$.
\end{Fact}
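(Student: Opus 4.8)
The plan is to check, weight part by weight part, that both zeta functions are given by the characteristic polynomial of Frobenius, and that this characteristic polynomial does not see whether one works modulo numerical or modulo $\ell$-adic homological equivalence. The mechanism is the Grothendieck--Lefschetz trace formula in two guises. First I would record the following: if $X/\mathbf{F}_q$ is smooth projective of dimension $d$ and $\alpha\in CH^{d}(X\times_{\mathbf{F}_q}X)_{\mathbf{Q}}$ is a self-correspondence, then the intersection number $\deg(\alpha\cdot{}^{t}\Delta_X)$ equals, on the one hand, the categorical trace of $\alpha$ acting on the motive $h(X)$ --- computed in $\mathsf{Mot}_{num}(\mathbf{F}_q)$ or in $\mathsf{Mot}_{hom_\ell}(\mathbf{F}_q)$, this makes no difference --- and, on the other hand, by the cohomological trace formula, the alternating sum $\sum_r(-1)^r\operatorname{tr}(\alpha_\ast\mid H^r(X,\mathbf{Q}_\ell))$. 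That the numerical and homological categorical traces agree is immediate from the definition of numerical equivalence: a cycle numerically equivalent to zero pairs trivially with $\Delta_X$, and this pairing is also computed cohomologically, hence equally kills cycles that are homologically trivial.

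Next I would combine this with the algebraicity of the K\"unneth projectors (Katz--Messing, \cite{MR0332791}), which underlies the canonical weight decomposition $M=\bigoplus_r h^r(M)$ recalled in this appendix: for $M=(X,p)$ there is, for each $r$, an algebraic self-correspondence $e_r$ of $X$, idempotent modulo numerical equivalence, cutting out the weight $r$ summand $h^r(M)$, and whose cohomological realization is the idempotent of $H^\ast(X,\mathbf{Q}_\ell)$ with image $p_\ast H^r(X,\mathbf{Q}_\ell)$. Applying the first paragraph to $\alpha=\pi_X^{\,n}\circ e_r$, for every $n\geq1$ and every $r$, the identity of traces becomes
\[
\operatorname{tr}\bigl(\pi_X^{\,n}\mid h^r(M)\bigr)=\operatorname{tr}\bigl(\pi_{X\ast}^{\,n}\mid p_\ast H^r(X,\mathbf{Q}_\ell)\bigr).
\]
Since we are in characteristic zero, Newton's identities promote equality of all power sums to equality of characteristic polynomials, so $\det(1-\pi_X t\mid h^r(M))=\det(1-\pi_{X\ast}t\mid p_\ast H^r(X,\mathbf{Q}_\ell))$; in particular both lie in $\mathbf{Q}[t]$ (which is already part of Katz--Messing and the Weil conjectures) and neither depends on $\ell$. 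Forming the alternating product $\prod_r(\,\cdot\,)^{(-1)^{r+1}}$ over $r$ then gives $Z^{num}(M,t)=Z^{hom_\ell}(M,t)$, and the $\ell$-independence of the right-hand side is part of the same statement.

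The main point requiring care is the claim, used above, that the categorical trace on $\mathsf{Mot}_{num}(\mathbf{F}_q)$ --- formed with respect to Deligne's twisted tensor structure \cite{MR1150598} --- agrees with the naive intersection number $\deg(\alpha\cdot{}^{t}\Delta_X)$. The twist only modifies the commutativity constraint, hence changes the categorical dimension of odd-weight motives by a sign, but it does not change the trace of an endomorphism of a fixed object; the clean way to see this is to argue one weight $r$ at a time on $h^r(M)$, where the sign ambiguity is pinned down exactly by the $(-1)^r$ of the Lefschetz formula and cancels against the $(-1)^r$ accounting for the parity of the weight. One also needs that the characteristic polynomial of $\pi_X$ on the numerical motive $h^r(M)$ is a genuine polynomial of the expected degree with rational coefficients, so that it is determined by its power sums; this is exactly what Jannsen's semisimplicity theorem \cite{MR1150598} together with Katz--Messing provide, identifying it with the classical integral characteristic polynomial of Frobenius on $\ell$-adic cohomology. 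With these conventions in place the Fact follows formally from the trace formula.
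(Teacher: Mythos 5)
The paper states this Fact without proof, treating it as a known result (with the surrounding references to Jannsen, Katz--Messing and Scholl), so there is no internal argument to compare against; judged on its own, your proof is the standard one and is essentially correct. The chain you use is the right one: the trace of an algebraic self-correspondence, computed as the intersection number $\deg(\alpha\cdot{}^{t}\Delta_X)$, visibly depends only on the numerical class of $\alpha$, factors through homological equivalence because intersection numbers are computed cohomologically, and by the Lefschetz formula for a Weil cohomology equals the alternating sum of cohomological traces; Katz--Messing's algebraic K\"unneth projectors (central, polynomials in Frobenius, hence commuting with $p$) isolate each weight; and equality of all power sums of Frobenius then forces equality of the characteristic factors weight by weight --- for this last step it is cleanest to use $\det(1-\pi t)=\exp\bigl(-\sum_{n\geq1}\operatorname{tr}(\pi^{n})\,t^{n}/n\bigr)$, which determines the polynomial from the traces alone and sidesteps having to know the numerical-side degree in advance (your appeal to Jannsen plus Katz--Messing for the degree is correct but not needed in this formulation). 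The one sentence you should repair is the claim that Deligne's twist ``does not change the trace of an endomorphism of a fixed object'': modifying the commutativity constraint multiplies the categorical trace of \emph{every} endomorphism of a pure weight-$r$ object by $(-1)^{r}$ --- that is exactly why the dimension changes from the Euler characteristic to the sum of Betti numbers --- so the assertion as stated is false. Your subsequent weight-by-weight bookkeeping, in which this $(-1)^{r}$ cancels against the $(-1)^{r}$ produced by the Lefschetz formula, is the correct statement and is what the proof actually uses, so this is a slip of phrasing rather than a genuine gap.
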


\subsection{Tate motives, supersingular motives}

A simple numerical motive has a characteristic polynomial via the Tannakian
structure, as in \cite[Prop. 2.1]{MR1265538}. A simple $\ell$-adic homological
motive has a characteristic polynomial by taking the action of the\ Frobenius
on its $\ell$-adic cohomology. Thus, either way, we have a notion of
characteristic polynomial and we will call its roots the \emph{Frobenius
eigenvalues}. We shall use the following conventions:

\begin{definition}
We call a simple (numerical or homological) motive \emph{Tate} if its
Frobenius eigenvalues are of the shape $q^{w}$ for some $w\in\mathbf{Z}$. We
call it \emph{supersingular} if its Frobenius eigenvalues are of the shape%
\[
\zeta\cdot q^{w/2}%
\]
for some $w\in\mathbf{Z}$ and $\zeta$ any root of unity.
\end{definition}

If we believe in the Tate conjecture, the motives over $\mathbf{F}_{q}$ are
(Tannakian) generated by abelian varieties and the supersingular abelian
varieties then generate the same motives as when we take the supersingular
ones with the above definition. This justifies the term `supersingular'.

\begin{example}
We have $\mathcal{M}(\mathbf{P}^{n})=\mathbf{Z}\oplus\mathbf{Z}(1)\oplus
\cdots\oplus\mathbf{Z}(n)$, sitting in $h^{0}$,$h^{2}$,\ldots, $h^{2n}$
respectively. In particular, these summands are all supersingular.
\end{example}

\subsection{Multiplicative zeta for
motives\label{subsect_MultZetaForMotivesAndNmCount}}

The following considerations make sense both in $\ell$-adic homological or
numerical motives; and under sending a homological motive to its numerical
counterpart, they are compatible.

So, let $Z$ denote either $Z^{hom_{\ell}}$ or $Z^{num}$, according to which
viewpoint we may prefer. Expanding it as a power series around $t=0$, we may
define numbers $N_{m}$ for a motive $(X,p)$ by%
\[
\sum_{m\geq1}N_{m}\frac{t^{m}}{m}:=\log Z((X,p),t)\text{.}%
\]
To make sense of this, we note that $Z((X,p),t)$ at $t=0$ is $+1$. Hence,%
\[
\log Z((X,p),t)=\log\left(  1-\left(  \sum_{r\geq1}a_{r}t^{r}\right)  \right)
=-\sum_{l\geq1}\frac{1}{l}\left(  \sum_{r\geq1}a_{r}t^{r}\right)  ^{l}%
\]
makes sense as a (formal or genuine) power series, and moreover its constant
coefficient vanishes. Thus, we may write it in the form $\log Z((X,p),t)=\sum
\nolimits_{r\geq1}N_{r}\cdot\frac{t^{r}}{r}$ for uniquely determined values
$N_{r}\in\mathbf{Q}$. Now, define%
\begin{equation}
Z_{\log}(X,t):=\exp\left(  \sum_{r\geq1}\log\left\vert N_{r}\right\vert
\cdot\frac{t^{r}}{r}\right)  \text{.}\label{lzims1}%
\end{equation}
This may not make sense for arbitrary motives since we could (and can!) have
$N_{r}=0$. This corresponds to the issue that we can only define $Z_{\log}$ in
the context of varieties when we demand the existence of a rational point. We
will content ourselves with this definition, which will make sense for many
motives. Otherwise, we will simply say that the multiplicative zeta function
is not defined. Nonetheless, a convenient observation is the following:

\begin{definition}
\label{def_unique_top_weight}We say that a motive $M$ has \emph{unique top
weight} if it decomposes as a finite direct sum%
\[
M=M_{1}\oplus\cdots\oplus M_{r}\oplus\mathbf{Z}(m)
\]
such that the\ Frobenius eigenvalues of all factors $M_{i}$, $i=1,\ldots,r$,
are strictly $<q^{m}$.
\end{definition}

\begin{lemma}
If $M$ has unique top weight, $N_{r}$ can only vanish for finitely many $r$
(and this can be effectively bounded).
\end{lemma}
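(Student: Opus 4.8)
The plan is to write $N_r$ explicitly in terms of the Frobenius eigenvalues and then to show that the contribution of the unique top weight strictly dominates the sum of all the others once $r$ is large. First I would recall, from \S\ref{subsect_MultZetaForMotivesAndNmCount}, that expanding $\log Z(M,t)=\sum_v(-1)^{v+1}\log\det(1-\pi t\mid h^v(M))$ as a power series in $t$ gives $N_r=\sum_v(-1)^v\sum_{\alpha}\alpha^r$, where the inner sum ranges over the Frobenius eigenvalues $\alpha$ of the weight-$v$ part $h^v(M)$, counted with multiplicity. Using the decomposition $M=\bigoplus_i M_i\oplus\mathbf{Z}(m)$ of Definition \ref{def_unique_top_weight}, the summand $\mathbf{Z}(m)$ sits in weight $2m$, so it contributes exactly the single term $q^{mr}$ with sign $(-1)^{2m}=+1$. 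Collecting the remaining eigenvalues (those of the $M_i$) into a finite list $\alpha_1,\dots,\alpha_D$ with signs $\epsilon_j\in\{\pm1\}$, this yields $N_r=q^{mr}+\sum_{j=1}^{D}\epsilon_j\alpha_j^{\,r}$.

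The second step is a routine domination by a geometric series. By hypothesis $|\alpha_j|<q^m$ for every $j$, so $\rho:=\max_j|\alpha_j|/q^m$ satisfies $0\le\rho<1$, being a maximum over a finite set of numbers $<1$. Then $|N_r|\ge q^{mr}-\sum_{j=1}^D|\alpha_j|^r\ge q^{mr}(1-D\rho^{\,r})$, which is strictly positive as soon as $D\rho^{\,r}<1$, i.e. as soon as $r>\log D/\log(1/\rho)$ (and if $D=0$ then $N_r=q^{mr}$ never vanishes). Hence $N_r$ can vanish only for $r\le\log D/\log(1/\rho)$, a finite set of values.

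For the effectivity claim I would observe that everything in sight is computable: $D$ is the total rank of the $\ell$-adic realizations of the $M_i$, and the characteristic polynomial of Frobenius on $\bigoplus_i M_i$ has rational coefficients (recoverable, for instance, from its action on $\ell$-adic cohomology, or from finitely many point counts), so its roots — and therefore the number $\rho<1$ — can be over-approximated by an explicit rational $\rho'<1$; the bound $\log D/\log(1/\rho')$ is then explicit. One may, if desired, sharpen this by invoking the Riemann hypothesis for these motives, which in the supersingular setting of Theorem \ref{thm_AC_Motive} holds by definition: there $|\alpha_j|\le q^{m-1/2}$, hence $\rho\le q^{-1/2}$ and the clean bound $r\le 2\log D/\log q$ applies.

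There is no real obstacle here; the only points that need a little care are the bookkeeping of signs and multiplicities in the eigenvalue expansion of $N_r$ — in particular verifying that the top-weight term enters with a plus sign and is genuinely larger in absolute value than the sum of all the others — and making the word ``effectively'' precise, for which it is enough to note that $D$ and a rational upper bound for $\rho$ are both extracted algorithmically from the characteristic polynomial of the Frobenius.
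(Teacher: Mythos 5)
Your proposal is correct and follows essentially the same route as the paper: expand $\log Z$ to get $N_r=q^{mr}+\sum_j(-1)^{v_j}\alpha_j^{\,r}$ with all $|\alpha_j|<q^m$, then observe that the top term dominates the rest (a geometric-series bound) for all large $r$. Your explicit bound $r\le\log D/\log(1/\rho)$ just quantifies the paper's remark that the bracketed factor is eventually too close to $1$ to vanish, and your effectivity discussion is a reasonable elaboration of the parenthetical claim.
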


This condition is clearly met for geometrically connected smooth projective
varieties $X/\mathbf{F}_{q}$ because the Poincar\'{e} dual partner of the
$h^{0}(X)=\mathbf{Z}(0)$ of the single connected component provides the single
summand $\mathbf{Z}(d)$ with $d:=\dim X$.

\begin{proof}
The ordinary zeta function of a motive has the shape%
\[
Z(M,t)=\prod\nolimits_{i,j}(1-\alpha_{i,j}\cdot t)^{(-1)^{i+1}}\text{,}%
\]
where $\left\vert \alpha_{i,j}\right\vert =q^{i/2}$ are\ Weil $q$-numbers of
weight $i$ (this follows from working with $\ell$-adic homological motives,
using the Weil conjectures there, and then using equality of characteristic
polynomials for numerical vs. $\ell$-adic homological motives). Thus,%
\begin{align*}
\sum_{l\geq1}N_{l}\frac{t^{l}}{l}  & =\sum_{i,j}(-1)^{i+1}\log(1-\alpha
_{i,j}\cdot t)\\
& =\sum_{i,j}(-1)^{i}\sum_{l=1}^{\infty}\frac{1}{l}\alpha_{i,j}^{l}t^{l}%
=\sum_{l=1}^{\infty}\left(  \sum_{i,j}(-1)^{i}\alpha_{i,j}^{l}\right)
\frac{t^{l}}{l}\text{.}%
\end{align*}
Thus, $N_{l}=\sum_{i,j}(-1)^{i}\alpha_{i,j}^{l}$. By our assumption precisely
one $\alpha_{i,j}$ has an absolute value strictly larger than any other of the
$\alpha_{i,j}$. Call the corresponding index $(i_{top},j_{top}) $. Then%
\[
N_{l}=(-1)^{i_{top}}\alpha_{i_{top,j_{top}}}^{l}(1+\sum_{(i,j)\neq
(i_{top},j_{top})}(-1)^{i-i_{top}}(\alpha_{i,j}/\alpha_{i_{top},j_{top}})^{l})
\]
with $\left\vert \alpha_{i,j}/\alpha_{i_{top},j_{top}}\right\vert <1$ for all
$(i,j)\neq(i_{top},j_{top})$. For sufficiently large $l$, the term in the
bracket is too close to $1$ to ever vanish again.
\end{proof}

We give some examples which show phenomena specific to $Z_{\log}$ for general
pure motives:

\begin{example}
\label{example_SupersingEllCurve}Suppose $A/\mathbf{F}_{p}$ is a supersingular
elliptic curve. Then for $h^{1}(A)$ we get $N_{r}=-\left(  \left(  \sqrt
{p}\right)  ^{r}+\left(  -\sqrt{p}\right)  ^{r}\right)  =-(1+(-1)^{r})p^{r/2}%
$. Every $N_{2r+1}$ is zero. Hence, we cannot define $Z_{\log}(h^{1}(A),t)$.
\end{example}

This kind of problem is specific to motives. For smooth projective varieties
the assumption of having an $\mathbf{F}_{q}$-rational point settles $N_{r}%
\neq0$ for all $r\geq1$.

\begin{example}
We continue Example \ref{example_SupersingEllCurve}. Define $Y:=h^{0}(A)\oplus
h^{1}(A)$, i.e. we truncate the top degree summand from the motive. Although
the simple summands of the motive $Y$ are all supersingular, Theorem
\ref{thm_AC_Motive} does \textsl{not} apply because $Y $ does not have a
unique top weight (Definition \ref{def_unique_top_weight}). However, we can
still establish \textbf{$\left.  \text{\textbf{$\textsc{(AC)}$}}\right.  $
}manually: We get%
\[
N_{r}=1-(1+(-1)^{r})p^{r/2}%
\]
and thus, after some series manipulations,%
\begin{align}
& =\exp\left(  -\frac{\log2}{2}\log\left(  1-t^{2}\right)  \right)  \cdot
\exp\left(  \frac{1}{2}\log(p)\frac{t^{2}}{1-t^{2}}\right) \nonumber\\
& \qquad\cdot\exp\left(  \frac{1}{2}\sum\nolimits_{r\geq1}\log\left\vert
1-\frac{1}{2}(p^{-1})^{r}\right\vert \cdot\frac{(t^{2})^{r}}{r}\right)
\text{.}\label{lze1}%
\end{align}
For $N:=1$, $\varepsilon_{1}:=2$, $\lambda_{1}:=p^{-1}$, the assumptions of
Theorem \ref{thm_AnalyticCtHelper} are met (and the respective $\mathcal{D}$
is locally finite thanks to $N=1$ and\ Lemma
\ref{Lemma_NAtMostTwoImpliesLocalFiniteness}) and we get a multi-valued
analytic continuation $W$ in the sense of Definition \ref{def_AC} such that in
a neighbourhood of $t=0$,%
\[
Z_{\log}(Y,t)^{2}=\left(  1-t^{2}\right)  ^{-\log(2)}\cdot p^{\left(
\frac{t^{2}}{1-t^{2}}\right)  }\cdot W(t^{2})\text{.}%
\]
The square on the left accounts for $\frac{1}{2}$ in Equation \ref{lze1},
while $t^{2}$ on the right accounts for the squared variable in loc. cit.
Since the existence of an analytic continuation for $W$ implies the existence
of a continuation for $t\mapsto\sqrt{W(t^{2})}$, we get \textbf{$\left.
\text{\textbf{$\textsc{(AC)}$}}\right.  $ }for $Z_{\log}(Y,t)$. This is our
first example where we needed $\varepsilon_{1}\neq\pm1$.
\end{example}

\begin{example}
We continue Example \ref{example_SupersingEllCurve} in a different way. Define
$W:=h^{1}(A)\oplus h^{2}(A)$, i.e. this time we truncate the degree zero part.
Since $h^{2}(A)\cong\mathbf{Z}(2)$, $W$ has supersingular summands and unique
top weight. Theorem \ref{thm_AC_Motive} applies. We compute%
\[
N_{r}=-(1+(-1)^{r})p^{r/2}+p^{r}%
\]
and after some series manipulations, this leads to
\[
Z_{\log}(W,t)=p^{\left(  \frac{t}{1-t}\right)  }\exp\left(  \frac{1}{2}%
\sum\nolimits_{r\geq1}\log\left\vert 1-2(p^{-1})^{r}\right\vert \cdot
\frac{(t^{2})^{r}}{r}\right)  \text{.}%
\]
With $N:=1,$ $\varepsilon_{1}:=2$ and $\lambda_{1}:=p^{-1}$, the Theorem
\ref{thm_AnalyticCtHelper} can be applied directly.
\end{example}

\bibliographystyle{amsalpha}
\bibliography{ollinewbib}
Date: {\today}%

\end{document}